\def\E{{\mathbb E }}
\def\goin{\to\infty}
\def\Bbb E{\mathbb{E}}
\def\Bbb R{\mathbb{R}}
\newtheorem{definition}{Definition}
\newtheorem{proposition}{Proposition}
\newtheorem{lemma}{Lemma}
\newtheorem{theorem}{Theorem}
\newtheorem{corollary}{Corollary}
\newtheorem{remark}{Remark}
\makeatletter \@addtoreset{equation}{section}
\font\tencmmib=cmmib10 \skewchar\tencmmib '60
\font\tenmsb=msbm10 
\def\Bbb#1{\hbox{\tenmsb#1}}
\def\bbox{\quad\hbox{\vrule \vbox{\hrule \vskip2pt \hbox{\hskip2pt
\vbox{\hsize=1pt}\hskip2pt} \vskip2pt\hrule}\vrule}}
\def\lessim{\ \lower4pt\hbox{$
\buildrel{\displaystyle <}\over\sim$}\ }
\def\gessim{\ \lower4pt\hbox{$\buildrel{\displaystyle >}
\over\sim$}\ }
\def\goin{\to \infty}
\def\go0{\to 0}
\def\leftitem#1{\item{\hbox to\parindent{\enspace#1\hfill}}}
\def\qed{{$\hfill \bbox$}}
\def\sg{\sigma}
\def\sg2{\sigma^2}
\def\__{_{\infty}}
\numberwithin{equation}{section} \theoremstyle{plain}
\newcommand{\1}{{\rm 1}\kern-0.24em{\rm I}}
\def\E{\mathbb E}
\def\R{\mathbb R}
\newtheorem{assumption}{Assumption}
\newtheorem{rema}{Remark}
\begin{document}

\begin{frontmatter}
\title{Asymptotics and Concentration Bounds for Bilinear Forms of Spectral Projectors of Sample Covariance} \runtitle{Asymptotics and concentration of spectral projectors}
\medskip

\textbf{This version is a part of the initial submission; another part was further developed and posted separately as Arxiv:1504.07333.}

\begin{aug}
\author{\fnms{Vladimir} \snm{Koltchinskii}\thanksref{t1}\ead[label=e1]{vlad@math.gatech.edu}} and 
\author{\fnms{Karim} \snm{Lounici}\thanksref{m1}\ead[label=e2]{klounici@math.gatech.edu}}
\thankstext{t1}{Supported in part by NSF Grants DMS-1207808, CCF-0808863 and CCF-1415498}
\thankstext{m1}{Supported in part by NSF CAREER Grant DMS-1454515 and Simons Collaboration Grant 315477}
\runauthor{V. Koltchinskii and K. Lounici}

\affiliation{Georgia Institute of Technology\thanksmark{m1}}

\address{School of Mathematics\\
Georgia Institute of Technology\\
Atlanta, GA 30332-0160\\
\printead{e1}\\
\printead*{e2}
}
\end{aug}

\begin{abstract}
Let $X,X_1,\dots, X_n$ be i.i.d. Gaussian random variables with zero mean
and covariance operator $\Sigma={\mathbb E}(X\otimes X)$ taking values in a separable Hilbert space ${\mathbb H}.$
Let 
$$
{\bf r}(\Sigma):=\frac{{\rm tr}(\Sigma)}{\|\Sigma\|_{\infty}}
$$ 
be the effective rank of $\Sigma,$
${\rm tr}(\Sigma)$ being the trace of $\Sigma$ and $\|\Sigma\|_{\infty}$ being
its operator norm. Let 
$$\hat \Sigma_n:=n^{-1}\sum_{j=1}^n (X_j\otimes X_j)$$ 
be the sample (empirical) covariance operator based on $(X_1,\dots, X_n).$  
The paper deals with a problem of estimation of spectral projectors of the covariance operator $\Sigma$ by their empirical counterparts, the spectral 
projectors of $\hat \Sigma_n$ (empirical spectral projectors). The focus is 
on the problems where both the sample size $n$ and the effective rank ${\bf r}(\Sigma)$ are large. This framework includes and generalizes well known high-dimensional spiked covariance models. Given a spectral projector $P_r$
corresponding to an eigenvalue $\mu_r$ of covariance operator $\Sigma$ and its 
empirical counterpart $\hat P_r,$ we derive sharp concentration bounds for  
bilinear forms of empirical spectral projector $\hat P_r$ in terms of sample size $n$ and effective 
dimension ${\bf r}(\Sigma).$
Building upon these concentration 
bounds, we prove the asymptotic normality of bilinear forms of random operators $\hat P_r -{\mathbb E}\hat P_r$ under the assumptions that $n\to \infty$ and 
${\bf r}(\Sigma)=o(n).$ In a special case of eigenvalues
of multiplicity one, these results are rephrased as concentration bounds and asymptotic normality for linear forms of empirical eigenvectors. 
Other results include bounds on the bias ${\mathbb E}\hat P_r-P_r$ and a method of bias reduction    
as well as a discussion of possible applications to statistical inference in high-dimensional principal component analysis.
\end{abstract}

\begin{keyword}[class=AMS]
\kwd[Primary ]{62H12} 
\end{keyword}

\begin{keyword}
\kwd{Sample covariance} \kwd{Spectral projectors} \kwd{Effective rank} \kwd{Principal Component Analysis}
\kwd{Concentration inequalities} \kwd{Asymptotic distribution} \kwd{Perturbation theory}
\end{keyword}

\end{frontmatter}

\section{Introduction}\label{Sec:Intro}

{\it Principal Component Analysis (PCA)} is among the most popular methods of exploring  the covariance structure of a random process in a wide array of applications. It is of a particular interest in high-dimensional statistics as a tool of dimension reduction and feature extraction. 

Let $X$ be a random vector in $\R^p$ with zero mean and covariance matrix $\Sigma.$ The classical PCA is based on estimating the eigenvalues and the associated spectral projectors of $\Sigma$ by the eigenvalues and the spectral projectors of the sample covariance matrix $\hat \Sigma_n$ based on $n$ i.i.d. replications of $X,$ that is, the sample (empirical) eigenvalues and the sample (empirical) spectral projectors. Assessing the performance of the standard PCA raises naturally a question of how the sample eigenvalues and sample spectral projectors deviate from their population counterparts. In the 'standard setting', where $p\geq 1$ is fixed and $n\rightarrow \infty$, Anderson  \cite{Anderson} established the limiting joint distribution of the sample eigenvalues and the associated sample eigenvectors 
(see also Theorem 13.5.1 in \cite{Andersonbook}). These results have been extended
in \cite{DPR} to the case of i.i.d. data in infinite-dimensional Hilbert spaces (they have been used and further developed in numerous papers that followed, see, e.g, \cite{MR2014}).  

A number of authors considered a  'high-dimensional setting', where the dimension $p = p_n$ is allowed to grow with the sample size $n.$ Marchenko and Pastur \cite{Marcenko_Pastur} derived the ``limiting density'' of the spectrum of $\hat \Sigma_n$ in the case when $\Sigma = I_p$ is the identity matrix and $\frac{p}{n} \rightarrow c \in (0, 1]$ as $n\rightarrow \infty$ (more precisely, they obtained the a.s. limit of the empirical distribution of the eigenvalues). Under the same conditions, Johnstone \cite{Johnstone} proved that 
the largest empirical eigenvalue (properly normalized) converges in distribution to the Tracy-Widom law. The accuracy of this approximation was studied in \cite{Johnstone_Ma,Ma2012}. Assuming that the covariance matrix $\Sigma$ is 
the sum of the identity matrix and a small finite rank symmetric positive semi-definite perturbation, Baik, Ben Arrous and Peche \cite{Baik_Benarrous_Peche} discovered a phase transition effect where the sample versions of the non-unit eigenvalues satisfy different asymptotic properties that depend on how far from $1$ the non-unit eigenvalues are. Another line of research is a non-asymptotic theory  of sample covariance where the main goal is to obtain sharp non-asymptotic bounds
on the operator norm $\|\hat \Sigma_n-\Sigma\|_{\infty};$ 
a review of these results  can be found in \cite{vershynin}.

Concerning the estimation of spectral projectors, Johnstone and Lu \cite{Johnstone_Lu} proved that the classical PCA approach could fail to produce a consistent estimator when $\frac{p}{n} \rightarrow c >0$ as $n\rightarrow \infty.$ To overcome this difficulty, several authors proposed alternative estimators 
of the covariance matrix $\Sigma$ and studied their performance under various sparsity assumptions on $\Sigma$. See, for instance, \cite{Johnstone_Lu_arxiv,Lounici2013,Ma2013,Paul_Johnstone2012,VuLei2012} and the references cited therein. 

We turn now to formulating the PCA problem in a general separable Hilbert space ${\mathbb H}.$ 
This framework includes not only the classical high-dimensional setting, but also 
PCA for functional data (FPCA), see Ramsay and Silverman \cite{Ramsay}, and kernel PCA (KPCA) in machine learning, see Sch\"olkopf, Smola and M\"uller \cite{Scholkopf},
Blanchard, Bousquet and Zwald \cite{Blanchard}. 

It will be assumed that ${\mathbb H}$ is a real Hilbert space, but, in some cases 
(especially, when one has to deal with resolvents of operators in ${\mathbb H}$), 
it has to be extended to a complex Hilbert space ${\mathbb H}_{{\mathbb C}}:=\{u+i v: u,v\in {\mathbb H}\}$ with a standard extension of the inner product.
In what follows, $\langle \cdot,\cdot\rangle$ denotes the inner product of ${\mathbb H}$ with $\|\cdot\|$ being the corresponding norm. With a little
abuse of notation, we also denote by $\langle \cdot,\cdot\rangle$ the standard inner product in the space of Hilbert--Schmidt operators acting in ${\mathbb H},$
the corresponding Hilbert--Schmidt norm being denoted by $\|\cdot\|_2.$
The notation $\|\cdot\|_{\infty}$ will be used for the operator norm of 
linear operators:
$$
\|A\|_{\infty}:= \sup_{\|u\|\leq 1}\|Au\|,\ A:{\mathbb H}\mapsto {\mathbb H}.
$$ 
More generally, $\|\cdot\|_{p}, p\in [0,+\infty]$ denotes the Schatten 
$p$-norm. 
Given vectors $u,v\in {\mathbb H},$ $u\otimes v$
is the tensor product of $u$ and $v$ (that is, $u\otimes v$ is an operator from ${\mathbb H}$ into ${\mathbb H}$ acting as follows: $(u\otimes v)x= \langle v,x\rangle u, x\in {\mathbb H}$). If $P$ is the orthogonal projector on a 
subspace $L\subset {\mathbb H},$ then $P^{\perp}$ denotes the projector on 
the orthogonal complement $L^{\perp}.$

The following notations are used throughout the paper: for nonnegative $B_1,B_2,$ $B_1\lesssim B_2$ (equivalently, $B_2\gtrsim B_1$) means that there exists an absolute constant $C>0$ such that $B_1\leq CB_2.$ If $B_1\lesssim B_2$ and $B_1\gtrsim B_2,$ we will write $B_1\asymp B_2.$ Sometimes, the signs $\lesssim, \gtrsim$ and $\asymp$ will be provided with subscripts. For instance, $B_1\lesssim_a B_2$ would mean that $B_1\leq CB_2,$ where $C$ is a constant that might depend on $a.$

Let $X,X_1,\dots,X_n$
be i.i.d. random vectors in ${\mathbb H}$ with mean zero and ${\mathbb E}\|X\|^2<+\infty.$
Denote by $\Sigma={\mathbb E}(X\otimes X)$ the covariance matrix of $X$ and  
let 
$$
\hat \Sigma := \hat \Sigma_n:=n^{-1}\sum_{j=1}^n X_j\otimes X_j
$$
be the sample covariance based on the observations $(X_1,\dots, X_n).$
Since $\Sigma$ is a compact symmetric nonnegatively definite operator
(in fact, a trace class operator), 
it has the following spectral decomposition
$
\Sigma = \sum_{r=1}^{\infty}\mu_r P_r,
$
where $\mu_r=\mu_r(\Sigma)$ are distinct strictly positive eigenvalues 
of $\Sigma$ (to be specific, arranged in decreasing order) and $P_r$ are the corresponding spectral projectors (orthogonal projectors in ${\mathbb H}$). Clearly, $m_r:={\rm rank}(P_r)<+\infty$ is the multiplicity of the eigenvalue $\mu_r$ in the spectrum $\sigma(\Sigma)$ of 
$\Sigma$ (in other words, it is 
the dimension of the eigenspace of $\Sigma$ that corresponds to $\mu_r$).
It will be convenient in what follows to denote by $\sigma_j=\sigma_j(\Sigma), j\geq 1$
the eigenvalues of $\Sigma$ arranged in a nonincreasing order and \it repeated 
with their multiplicities. \rm Let $\Delta_r:=\{j:\sigma_j=\mu_r\}.$ Then ${\rm card}(\Delta_r)=m_r.$
Of course, the sample covariance $\hat \Sigma$ admits a similar spectral 
representation. Note that since the rank of $\hat \Sigma$ is at most $n,$ it has 
at most $n$ non-zero eigenvalues. Denote by $\hat P_r$ the orthogonal projector 
on the direct sum of eigenspaces of $\hat \Sigma$ corresponding to the eigenvalues 
$\{\sigma_j(\hat \Sigma), j\in \Delta_r\}.$ It is well known (and it will be discussed in 
detail in the next section) that as soon as $\hat \Sigma$ is close enough to $\Sigma$
in the operator norm, the eigenvalues $\{\sigma_j(\hat \Sigma), j\in \Delta_r\}$ are  
in a small neighborhood of $\mu_r$ and all other eigenvalues of $\hat \Sigma$ are 
separated from this neighborhood. Thus, for each $r,$ if $n$ is sufficiently large,
there is a cluster $\{\sigma_j(\hat \Sigma), j\in \Delta_r\}$ of eigenvalues of $\hat \Sigma$ 
and the corresponding spectral projector $\hat P_r$ is a natural estimator of $P_r$
(note that, in this case, ${\rm rank}(\hat P_r)={\rm rank}(P_r)=m_r$). 

We will be interested in asymptotic properties of the ``empirical'' spectral projector $\hat P_r$ as an estimator of the true spectral projector $P_r.$
The following assumption holds throughout the paper: 

\begin{assumption}
Assume that 
$X,X_1,\dots, X_n$
are i.i.d. random variables sampled from a Gaussian distribution in ${\mathbb H}$
with zero mean and covariance $\Sigma.$ 
\end{assumption}
 
We are especially interested in the case when not only the sample size $n$
is large, but also the trace of matrix $\Sigma,$ ${\rm tr}(\Sigma),$
is large as well (formally, one has to deal with a sequence of problems 
with covariances $\Sigma^{(n)}$ such that ${\rm tr}(\Sigma^{(n)})\to \infty$ as $n\to \infty$). This is a crucial difference with other literature on PCA in Hilbert spaces 
(such as \cite{DPR}) where it is typically assumed that ${\rm tr}(\Sigma)$ is a 
constant. This is what makes our results closer to what has been studied in the literature on PCA in high dimensions. To simplify
the matter, we will assume that the individual eigenvalues in the spectrum 
of $\Sigma$ are not large, so, the operator norm $\|\Sigma\|_{\infty}$ will be bounded by a constant. In this case, it makes sense to characterize 
the dimensionality of the problem by the so called {\it ``effective rank''}
of $\Sigma$ (which also tends to infinity).  

\begin{definition}
The following quantity
$$
{\bf r}(\Sigma):= \frac{{\rm tr}(\Sigma)}{\|\Sigma\|_{\infty}}
$$
will be called the effective rank of $\Sigma.$
\end{definition}

Clearly, ${\bf r}(\Sigma)\leq {\rm rank}(\Sigma).$  
Our setting includes, in particular, a popular high-dimensional {\it spiked covariance model} (see \cite{Johnstone}, \cite{Johnstone_Lu}, \cite{Paul_2007}) described in the following 
example. 

{\bf Example: Spiked Covariance Model.} 
Suppose that $\{\theta_k\}$ is an orthonormal basis in ${\mathbb H}$ and let 
$
S:= \sum_{k=1}^m s_k \zeta_k \theta_k
$
be a ``signal'', $s_j, j=1,\dots, m$ being nonrandom positive real numbers and $\zeta_j, j=1,\dots, m$
being i.i.d. standard normal random variables. Let $\dot W$ be a {\it Gaussian white noise} 
(a centered Gaussian r.v. with mean zero and identity covariance operator)
that could be informally written as $\dot W= \sum_{k\geq 1} \eta_k \theta_k,$ where $\{\eta_k\}$ are i.i.d. standard normal random variables (independent also of $\{\zeta_k\}$).
Note that $\dot W$ is not a random vector in ${\mathbb H},$ 
but the family of linear functionals $\langle \dot W, u\rangle, u\in {\mathbb H}$
is well defined as an isonormal Gaussian process indexed by ${\mathbb H},$
that is, a centered Gaussian process with covariance function 
$$
{\mathbb E}\langle \dot W, u\rangle \langle \dot W, v\rangle = \langle u,v \rangle,
u,v\in {\mathbb H}.
$$ 
Thus, $\dot W$ is defined in a ``weak sense'' and it is well known that it can be
also formally described as a random variable in a proper extension ${\mathbb H}_{-}\supset {\mathbb H}$ (often defined as a space of linear functionals on a dense linear subspace of ${\mathbb H}$). 
Suppose that $S$ is observed in additive ``white noise'', that is, the observation of $S$ is $X=S+\sigma \dot W.$ More precisely, we will assume that the data consists of i.i.d. copies 
$X_1^{(n)}, \dots, X_n^{(n)}$ of a random vector $X^{(n)}\in {\mathbb H},$ where
$$
X^{(n)}= S + \sigma {\dot W}^{(n)},\ \ {\dot W}^{(n)}=\sum_{k=1}^{p} \eta_k \theta_k,\ \ p=p_n\to\infty\ {\rm as}\ n\to\infty.
$$
It is easy to see that $X^{(n)}$ can be rewritten as 
$$
X^{(n)}= \sum_{j=1}^m \sqrt{s_j^2 + \sigma^2} \xi_j \theta_j + \sigma \sum_{j=m+1}^{p_n}\xi_j \theta_j, 
$$
where ${\xi_j}$ are i.i.d. standard normal random variables. The covariance of $X^{(n)}$ is 
$$
\Sigma^{(n)}={\mathbb E}(X^{(n)}\otimes X^{(n)})= \sum_{j=1}^m (s_j^2+\sigma^2) (\theta_j\otimes \theta_j)+ \sigma^2 P_{m,p_n},
$$ 
where $P_{m,p_n}$ denotes the orthogonal projector on the linear span of vectors $\theta_j, j=m+1,\dots, p_n.$ Clearly, for a fixed $m,$
$$
{\rm tr}(\Sigma^{(n)})= \sum_{j=1}^m s_j^2 + \sigma^2 p_n \asymp p_n \to \infty\ {\rm as}\ n\to \infty.
$$
Estimation of the vectors $\theta_1,\dots, \theta_m$ (the components of the ``signal'')
can be now viewed as a PCA problem for unknown covariance 
$\Sigma^{(n)}.$ Obviously, as it is usually done in the literature, one can also phrase this as a sequence of high-dimensional problems in spaces $\mathbb R^p, p=p_n$ 
(without an explicit embedding of ${\mathbb R}^p$ into an infinite dimensional Hilbert space ${\mathbb H}$).  
In such a high-dimensional setting, the performance of the PCA is usually assessed by measuring the ``alignment'' between the target eigenvector and its estimator. In \cite{Birnbaumetal}, the authors considered the loss function $L(a,b):= 2(1-|\langle a,b\rangle |),$ where $a,b \in \R^p$ are unit vectors. 
It is closely related to the loss function 
$$L'(a,b):= \|a \otimes a - b \otimes b\|_2^2 = 2(1-\langle a,b\rangle^2),$$ 
that is used, for instance, in \cite{MaCaiYihong,Lounici2013,VuLei2012}. For the spiked covariance model described above, where $s_1>\cdots>s_m>0$, $\sigma^2 = 1$ and $m\geq 1$ are fixed and $\frac{p}{n}\rightarrow 0$ as $n\rightarrow \infty,$ 
the following asymptotic representation of the risk of classical PCA was obtained in \cite{Birnbaumetal}:
\begin{equation}
\label{Birnbaumresult}
\E L(\hat\theta_j ,\theta_j) = \left[  \frac{(p-m)(1+s_j^2)}{n s_j^4}  + \frac{1}{n}\sum_{k\neq j} \frac{(1+s_j^2)(1+s_k^2)}{(s_j^2 -s_k^2)^2}\right](1+o(1)),\quad \forall 1\leq j \leq m.
\end{equation}
In \cite{Birnbaumetal}, the authors also considered the setting $\frac{p}{n}\rightarrow c>0$ as $n\rightarrow \infty,$ where the classical PCA is known to produce inconsistent estimators of the eigenvectors (see, for instance, \cite{Johnstone_Lu}), and proposed a thresholding procedure related to, but more refined than the diagonal thresholding of Johnstone and Lu \cite{Johnstone_Lu} that achieves optimality in the minimax sense for the loss $L(\cdot,\cdot)$ under sparsity conditions on the eigenvectors of $\Sigma$. 

The loss functions $L$ and $L'$ are not suitable for the {\em support recovery} problem, that is, the estimation of the set $
{\rm supp}(\theta_r):=\Bigl\{j: \theta_r^{(j)}\neq 0\Bigr\}
$
for an eigenvector $\theta_r.$ To the best of our knowledge, very few results on this problem   
are available in the high-dimensional setting and they are obtained under very restrictive conditions on the covariance structure. 
For instance, in \cite{AminiWainwright}, a spiked covariance model was considered, where $\Sigma = s_1^2 \theta_1 \otimes \theta_1 + \left(\begin{array}{c|c} I_k & 0\\\hline 0& \Gamma_{p-k}\end{array} \right)$, the first $k$ entries of $\theta_1 \in S^{p-1}$ are equal to $\pm \frac{1}{\sqrt{k}}$ for some $k\geq 1$ and $\Gamma_{p-k}$ is symmetric positive semi-definite with $\|\Gamma_{p-k}\|_{\infty} \leq 1$. The authors established an asymptotic support recovery result for the SDP-relaxation methodology introduced in \cite{aspremont}, assuming that $k= O(\log p)$ is known, that $n \geq C(\Sigma) k \log (p-k),$ where $C(\Sigma)>0$ depends only on $\Sigma,$ and also assuming 
the existence of a rank one solution of the SDP optimization problem. 

Asymptotics of eigenvectors of sample covariance in a high-dimensional spiked covariance model were studied by Paul \cite{Paul_2007}. Namely, he considered 
a problem, where $X\sim N_p(0,\Sigma)$ with a spiked covariance matrix 
$$\Sigma = \mathrm{diag}(s_1^2,s_2^2,\cdots,s_m^2,1,\cdots,1)$$ 
and fixed $s_1>\cdots>s_m>1$, $m\geq 1$. Let $\hat \theta_j$ be the $j$-th sample eigenvector and let $\hat\theta_j  = (\hat \theta_{A,j}, \hat\theta_{B,j}),$ where $\hat\theta_{A,j}$ is the subvector corresponding to the first $m$ components and $\hat\theta_{B,j}$ contains the remaining $p-m$ components. Paul \cite{Paul_2007} established that
$\frac{\hat\theta_{B,j}}{\|\hat\theta_{B,j}\|} $ is uniformly distributed in the unit sphere $S^{p-m-1}$ and is independent of $\|\hat\theta_{B,j}\|.$ In addition, if $\frac{p}{n} - c  = o(\frac{1}{n^{1/2}})$ with $c \in (0,1)$ and $s_j^2 > 1+ \sqrt{c}$, then also 
$$
\sqrt{n} \left( \frac{\hat\theta_{A,j}}{\|\hat\theta_{A,j}\|}  - e_j\right) \rightarrow N(0,\Sigma_j(s_j))\ {\rm as}\ n\rightarrow \infty,
$$
where 
$$
\Sigma_{j}(s_j) = \left( \frac{1}{1 - \frac{c}{(s_j^2 - 1)^2}}\right)\sum_{1\leq k\neq j \leq m} \frac{(s_k s_j)^2}{(s_k^2 - s_j^2)^2}(e_k\otimes e_k),
$$
and $e_k$ is the $k$-th vector of the canonical basis of $\R^p$.

The spiked covariance model is a special case of more general models discussed in the next example.

{\bf Example: More General Spiked Models.}
Let $\Sigma$ be a symmetric nonnegatively definite bounded operator 
that admits the following representation
$$
\Sigma = \sum_{r=1}^m \mu_r P_r + \Upsilon,
$$
where $\mu_r$ are distinct positive numbers, $P_r$ are projectors on 
mutually orthogonal finite dimensional subspaces of ${\mathbb H}$
and $\Upsilon:{\mathbb H}\mapsto {\mathbb H}$ is a nonnegatively definite 
symmetric bounded operator such that 
$P_r \Upsilon = \Upsilon P_r=0, r=1,\dots, m.$
Moreover, suppose that $\|\Upsilon\|_{\infty}< \min_{1\leq r\leq m}\mu_r$
(in which case the spectrum of $\Sigma$ is the union of two separated sets, 
$\{\mu_1,\dots, \mu_r\}$ and the spectrum of the operator $\Upsilon$). 
Note that since $\Upsilon$ is not necessarily of trace class, it might not be 
a covariance operator of a random vector in ${\mathbb H}$ with a bounded strong 
second moment, and the same applies 
to $\Sigma.$ However, $\Sigma$ and $\Upsilon$ can be always viewed as covariance operators 
of ``generalized random elements'' (linear functionals on dense linear subspaces of ${\mathbb H}$), the same way as the identity operator is the covariance operator of the white noise $\dot W.$ Let $P_{L_n}$ be the orthogonal projector on a finite-dimensional subspace $L_n\subset {\mathbb H}.$ 
Suppose that ${\rm dim}(L_n)\to \infty$ as $n\goin,$ $\bigcup_{n\geq 1}L_n$ 
is dense in ${\mathbb H}$ and 
$P_r {\mathbb H}\subset L_n, r=1,\dots, m$ for all large enough $n.$ Let $X^{(n)}$
be a centered Gaussian vector in ${\mathbb H}$ with covariance operator $\Sigma^{(n)}= P_{L_n}\Sigma P_{L_n}$ and let $X_1^{(n)}, \dots, X_n^{(n)}$ be i.i.d. copies of $X^{(n)}.$ 
Then the problem becomes to estimate the principal spectral projectors $P_r, r=1,\dots, m$
based on the sample $(X_1^{(n)}, \dots, X_n^{(n)}),$ which is again a PCA problem.
If ${\rm tr}(\Upsilon)=\infty,$ then also ${\rm tr}(\Sigma)=\infty$ and ${\rm tr}(\Sigma^{(n)})\to \infty$ as $n\goin .$ 
One can go even further and consider the case of more general covariance operators $\Sigma^{(n)}$ of the observations $X_1^{(n)}, \dots, X_n^{(n)}$ that converge in some sense (for instance, in the sense of strong convergence of operators) to a symmetric nonnegatively definite operator $\Sigma.$

In this paper and in a related paper \cite{Kol_Lou_2014}, we develop a general theory of the asymptotic behavior of spectral projectors of the sample covariance operators that encompasses the spike covariance models described above as well as more general models of covariance operators for observations in a separable Hilbert space. We are especially interested in the case 
when ${\bf r}(\Sigma^{(n)})=o(n),$ which is a necessary and sufficient condition 
for convergence of the sample covariance $\hat \Sigma_n$ to the true covariance $\Sigma$ in the operator norm (and which, essentially, implies consistency of eigenvalues and of spectral projectors of sample covariance as estimators of their population counterparts).  
More specifically, our contributions include the following:

\begin{itemize}

\item 
In Section \ref{Sec:Prelim}, we review recent moment bounds and concentration 
inequalities (see \cite{Koltchinskii_Lounici_arxiv}) for $\|\hat\Sigma_n-\Sigma\|_{\infty}$ 
showing that, in the Gaussian case, the size of this random variable is completely characterized by two parameters, the operator norm $\|\Sigma\|_{\infty}$ and the effective rank ${\bf r}(\Sigma).$ This implies that $\|\hat \Sigma_n-\Sigma\|_{\infty}\to 0$ (a.s. and in the mean) if and only if ${\bf r}(\Sigma)=o(n).$ In the same section, we discuss several results in perturbation theory used throughout the paper.

\item 
In Section \ref{Sec:Representation}, we obtain basic concentration inequalities 
for bilinear forms of empirical spectral projectors $\hat P_r.$ In particular, we show that the following representation holds:
$$
\hat P_r -{\mathbb E}P_r = L_r + R_r,
$$
where the main term $L_r$ is linear with respect to $\hat \Sigma-\Sigma$ and, thus,
it can be represented as a sum of i.i.d. random variables. The bilinear forms of the remainder term $R_r$ satisfy sharp Gaussian type concentration inequalities, implying, in particular, that 
$$
\Bigl|\langle R_r u,v\rangle\Bigr|=
O_{\mathbb P}\biggl(\sqrt{\frac{{\bf r}(\Sigma)}{n}}\sqrt{\frac{1}{n}}\biggr). 
$$ 
If ${\bf r}(\Sigma)=o(n),$ the bilinear forms $\langle R_r u,v\rangle$ are 
of the order $o_{\mathbb P}(n^{-1/2})$ and asymptotic normality of the bilinear forms $\Bigl\langle(\hat P_r -{\mathbb E}P_r) u,v\Bigr\rangle$ can be easily deduced from the central limit 
theorem applied to the linear term $\langle L_r u,v\rangle.$

\item
In Section \ref{Sec:Bias}, we derive an asymptotic representation for the bias 
${\mathbb E}\hat P_r-P_r$ of the empirical spectral projector $\hat P_r$ showing 
that its main term is an operator of the form $P_r W_r P_r,$ where $\|W_r\|_{\infty}=O(\frac{{\bf r}(\Sigma)}{n}),$ and the remainder is 
of the order $O\biggl(\sqrt{\frac{{\bf r}(\Sigma)}{n}}\sqrt{\frac{1}{n}}\biggr).$  
This implies, in particular, that, in the case when $m_r=1$ (the case of simple eigenvalue) the bias is proportional to the one-dimensional true spectral projector 
$P_r$ up to a higher order term (indicating that a multiplicative correction can lead to a bias reduction).

\item
In Sections \ref{Sec:Asymptotic} 
we derive the asymptotic distributions of bilinear forms of the empirical spectral projectors. In particular, we show that, under the assumption ${\bf r}(\Sigma)=o(n),$ the finite dimensional distributions of 
$$\sqrt{n}\Bigl\langle (\hat P_r - {\mathbb E}\hat P_r)u,v \Bigr\rangle, u,v\in {\mathbb H}$$ converge weakly to the finite dimensional distributions of a Gaussian process. 
Our results show that the ``variance part'' of the error 
$\Bigl\langle (\hat P_r -P_r)u,v \Bigr\rangle$ 
is relatively well-behaved and that its dominating part is ``bias'', which might require further attention in statistical applications.


\item 
In Section \ref{Sec:VarSel}, we study in more detail the case of spectral projectors corresponding to an isolated eigenvalue of multiplicity $m_r=1.$ In this case, we prove the asymptotic normality of properly centered and normalized linear forms 
$\langle \hat \theta_r,u\rangle, u\in {\mathbb H}$ of the corresponding sample eigenvector $\hat \theta_r.$ Namely, we prove the weak convergence of finite dimensional 
distributions of stochastic processes 
$$
n^{1/2}\Bigl\langle \hat \theta_r-\sqrt{1+b_r}\theta_r,u\Bigr\rangle, u\in {\mathbb H} $$
to the finite dimensional distributions of a Gaussian process for properly chosen 
``bias parameters'' $b_r.$
We also obtain non-asymptotic concentration bounds for the $l_{\infty}$-norm $\Bigl\|\hat \theta_r-\sqrt{1+b_r}\theta_r\Bigr\|_{\ell_{\infty}}.$ 
In addition, we propose an estimator of the bias parameter $b_{r}$
that converges to the true parameter at a rate faster than $n^{-1/2}$ and develop a bias reduction method based on this estimator.
At the end of Section \ref{Sec:VarSel}, we briefly discuss potential applications of these results, in particular, to the problem of support recovery of the eigenvector of interest as well as sparse PCA estimation.

\end{itemize}

In a related paper \cite{Kol_Lou_2014}, we 
obtained an asymptotic formula for the Hilbert--Schmidt norm risk ${\mathbb E}\|\hat P_r-P_r\|_2^2$ of empirical spectral projectors under the assumption that ${\bf r}(\Sigma)=o(n).$ In a special case of 
spiked covariance model, it implies representation (\ref{Birnbaumresult}). 
We also proved in \cite{Kol_Lou_2014} the asymptotic normality of a properly normalized sequence 
$$\Bigl\{\|\hat P_r - P_r\|_2^2-{\mathbb E}\|\hat P_r - P_r\|_2^2\Bigr\}.$$

\section{Preliminaries}\label{Sec:Prelim}

In this section, we review bounds on the operator norm $\|\hat \Sigma_n-\Sigma\|_{\infty}$ and 
discuss several well known facts of perturbation theory that will be frequently 
used in what follows.

\subsection{Bounds on the operator norm $\|\hat \Sigma_n-\Sigma\|_{\infty}.$}\label{Sec:Spectral}

It is well known (see \cite{vershynin}) that, for a sub-subgaussian isotropic distribution 
(that is, in the case when $\Sigma =I_p$), with probability at least $1-e^{-t}$ 
\begin{equation}
\label{vershynin}
\|\hat \Sigma_n - \Sigma\|_{\infty } \leq C \left(\sqrt{\frac{p}{n}}\bigvee \frac{p}{n}\bigvee \sqrt{\frac{t}{n}}\bigvee \frac{t}{n} \right),
\end{equation}
for some numerical constant $C>0$ (see Theorem 5.39 and the comments after this theorem). 
The proof is based on an $\varepsilon$-net argument that does not yield an optimal bound for general (nonisotropic) sub-subgaussian distributions. 
In \cite{Lounici2012, BuneaXiao}, similar results were derived for sub-subgaussian distributions and low-rank covariance matrices. However the bounds in the last two papers are suboptimal by a logarithmic factor (they are based on a noncommutative Bernstein inequality).

The following theorems (see Koltchinskii and Lounici \cite{Koltchinskii_Lounici_arxiv})
could be viewed as an extension of bound (\ref{vershynin}) to the nonisotropic and infinite-dimensional case. These results show that in the Gaussian case, the size of the operator norm $\|\hat \Sigma_n-\Sigma\|_{\infty}$ is completely characterized 
by the operator norm $\|\Sigma\|_{\infty}$ and the effective rank ${\bf r}(\Sigma).$
In particular, if $\Sigma=\Sigma^{(n)}$ with $\|\Sigma^{(n)}\|_{\infty}$ uniformly 
bounded, then $\|\hat \Sigma_n-\Sigma^{(n)}\|_{\infty}\to 0$ a.s. as $n\to\infty$ if and only if ${\bf r}(\Sigma^{(n)})=o(n).$

\begin{theorem}
\label{th_operator}
Let $X,X_1,\ldots,X_n$ be i.i.d. centered Gaussian random vectors in ${\mathbb H}$ with covariance $\Sigma = \E(X\otimes X).$ Then, for all $p\geq 1,$ 
\begin{align}
\label{E1/p}
\E^{1/p}\|\hat\Sigma_n - \Sigma\|_{\infty}^p \asymp_{p} \|\Sigma\|_\infty\max\left\lbrace \sqrt{\frac{\mathbf{r}(\Sigma)}{n}}, \frac{\mathbf{r}(\Sigma)}{n}\right\rbrace.
\end{align}
\end{theorem}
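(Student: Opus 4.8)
The plan is to prove the two-sided bound (\ref{E1/p}) by treating the upper and lower bounds separately, since these are of rather different natures. For the upper bound, the key idea is to reduce the operator-norm deviation to a supremum of a Gaussian quadratic form. Writing $\|\hat\Sigma_n-\Sigma\|_\infty = \sup_{\|u\|\le 1}|\langle(\hat\Sigma_n-\Sigma)u,u\rangle|$ and noting that $\langle\hat\Sigma_n u,u\rangle = n^{-1}\sum_{j=1}^n\langle X_j,u\rangle^2$, we see that for each fixed $u$ the quantity $\langle(\hat\Sigma_n-\Sigma)u,u\rangle$ is a centered sum of i.i.d. (sub-exponential, being squares of Gaussians) random variables. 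First I would establish the bound in expectation ($p=1$) via a chaining / generic-chaining argument or, more efficiently, via a symmetrization-and-comparison argument: after symmetrizing one can relate $\Exval{\|\hat\Sigma_n-\Sigma\|_\infty}$ to $\Exval{\sup_{\|u\|\le1} n^{-1}\sum_j \varepsilon_j\langle X_j,u\rangle^2}$, then use the contraction principle (the function $x\mapsto x^2$ is Lipschitz on the relevant range, with the range controlled by $\max_j\|X_j\|$) to pass to the \emph{linear} Gaussian process $\sup_{\|u\|\le1} n^{-1}\sum_j\varepsilon_j g_j\langle X_j,u\rangle$. The supremum of this linear process is, conditionally, the operator norm of a Gaussian-weighted sum, which one controls by its $\Lt$-type metric; the two regimes $\sqrt{{\bf r}(\Sigma)/n}$ and ${\bf r}(\Sigma)/n$ emerge from the "sub-Gaussian" and "sub-exponential" parts of the Bernstein-type tail of the squared Gaussians, exactly as in (\ref{vershynin}). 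An alternative and cleaner route for the expectation bound is to use a one-step spectral inequality: $\|\hat\Sigma_n-\Sigma\|_\infty \le 2\sup_{u\in N}|\langle(\hat\Sigma_n-\Sigma)u,u\rangle|$ fails in infinite dimensions, so instead I would use the variational bound $\Exval\|\hat\Sigma_n-\Sigma\|_\infty \lesssim \|\Sigma\|_\infty\bigl(\sqrt{\Exval\|X\|^2/(n\|\Sigma\|_\infty)}\vee \Exval\|X\|^2/(n\|\Sigma\|_\infty)\bigr)$ after noting $\Exval\|X\|^2 = \mathrm{tr}(\Sigma) = \|\Sigma\|_\infty{\bf r}(\Sigma)$; this is essentially a matrix/operator Bernstein inequality for the operators $X_j\otimes X_j - \Sigma$, whose "variance parameter" is $\|\Exval(\langle X,X\rangle X\otimes X)\|_\infty \lesssim \mathrm{tr}(\Sigma)\|\Sigma\|_\infty$ and whose "$\psi_1$ norm" scales like $\mathrm{tr}(\Sigma)$.

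Having the expectation bound, I would upgrade to general $p$ by a concentration argument. The map $(X_1,\dots,X_n)\mapsto\|\hat\Sigma_n-\Sigma\|_\infty$ is not globally Lipschitz in the Gaussian variables, but it is a convex function of the sample, so one can apply the Gaussian concentration inequality for convex Lipschitz functions after truncation, or — more robustly — apply the Gaussian concentration/Borell–TIS inequality to the function $f(Z)=\|n^{-1}\sum_j (A_j Z)\otimes(A_j Z)-\Sigma\|_\infty$ viewed as a function of a single standard Gaussian vector $Z$ in a product space, computing its Lipschitz constant on the likely event $\{\max_j\|X_j\|\lesssim\sqrt{\mathrm{tr}(\Sigma)}+\sqrt{\|\Sigma\|_\infty t}\}$. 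This yields a tail bound of the form $\Prob{\|\hat\Sigma_n-\Sigma\|_\infty\ge \Exval\|\hat\Sigma_n-\Sigma\|_\infty + \|\Sigma\|_\infty(\sqrt{t/n}\vee t/n)}\le e^{-t}$, and integrating the tail gives $\Exval^{1/p}\|\hat\Sigma_n-\Sigma\|_\infty^p \lesssim_p \Exval\|\hat\Sigma_n-\Sigma\|_\infty + \|\Sigma\|_\infty\max\{\sqrt{1/n},1/n\}$, which is absorbed into the right-hand side of (\ref{E1/p}) since the expectation term already dominates $\|\Sigma\|_\infty/\sqrt n$ in the regime where ${\bf r}(\Sigma)\ge 1$ (and ${\bf r}(\Sigma)\ge1$ always, as ${\bf r}(\Sigma)=\mathrm{tr}(\Sigma)/\|\Sigma\|_\infty\ge1$).

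For the matching lower bound $\Exval^{1/p}\|\hat\Sigma_n-\Sigma\|_\infty^p \gtrsim_p \|\Sigma\|_\infty\max\{\sqrt{{\bf r}(\Sigma)/n},{\bf r}(\Sigma)/n\}$, I would argue as follows. Since $\Exval^{1/p}(\cdot)$ is nondecreasing in $p$, it suffices to lower-bound the $p=1$ (or even $p=2$, via Paley–Zygmund) quantity. Testing against the top eigenvector $\theta_1$ of $\Sigma$ gives $\|\hat\Sigma_n-\Sigma\|_\infty\ge|\langle(\hat\Sigma_n-\Sigma)\theta_1,\theta_1\rangle| = |n^{-1}\sum_j(\langle X_j,\theta_1\rangle^2-\|\Sigma\|_\infty)|$, whose second moment is of order $\|\Sigma\|_\infty^2/n$, giving the $\sqrt{1/n}$ piece but not the effective-rank factor. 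To capture ${\bf r}(\Sigma)/n$ and $\sqrt{{\bf r}(\Sigma)/n}$, I would instead lower-bound by a one-dimensional-subspace average: for a well-chosen unit vector $u$ in the range of $\Sigma$ that spreads mass across many moderate eigendirections, $\langle\hat\Sigma_n u,u\rangle = n^{-1}\sum_j\langle X_j,u\rangle^2$ concentrates around $\langle\Sigma u,u\rangle$, so this does not help either; the right move is to bound $\|\hat\Sigma_n-\Sigma\|_\infty \ge \|\hat\Sigma_n\|_\infty - \|\Sigma\|_\infty$ and lower-bound $\|\hat\Sigma_n\|_\infty = \sup_{\|u\|\le1}n^{-1}\sum_j\langle X_j,u\rangle^2 \ge n^{-1}\sum_j\|X_j\|^2/(\text{effective dimension})$ is too weak — instead one uses that $\|\hat\Sigma_n\|_\infty\ge n^{-1}\langle X_1,X_1\rangle - $ (cross terms), and $\|X_1\|^2$ has mean $\mathrm{tr}(\Sigma)=\|\Sigma\|_\infty{\bf r}(\Sigma)$ with fluctuations of order $\|\Sigma\|_2\le\|\Sigma\|_\infty\sqrt{{\bf r}(\Sigma)}$; a careful execution (handling the rank-one-vs-rest decomposition of $\hat\Sigma_n$, e.g. via $\hat\Sigma_n \succeq n^{-1}X_1\otimes X_1$ and then $\|X_1\otimes X_1\|_\infty=\|X_1\|^2$) gives $\|\hat\Sigma_n\|_\infty\gtrsim \|\Sigma\|_\infty{\bf r}(\Sigma)/n$ with constant probability, yielding the ${\bf r}(\Sigma)/n$ regime, while a standard lower bound for the supremum of the empirical Gaussian quadratic process over a net of the unit ball in the leading ${\bf r}(\Sigma)$-dimensional eigenspace gives the $\sqrt{{\bf r}(\Sigma)/n}$ regime.

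The main obstacle I anticipate is obtaining the \emph{sharp} (optimal up to absolute constants, not merely up to logarithms) upper bound in the nonisotropic infinite-dimensional setting: the naive $\varepsilon$-net argument behind (\ref{vershynin}) loses a logarithmic factor for general $\Sigma$, and the noncommutative Bernstein inequality used in \cite{Lounici2012,BuneaXiao} also loses a log. The crux is therefore the passage, via symmetrization and the contraction principle, from the empirical \emph{quadratic} process $\sup_{\|u\|\le1}n^{-1}\sum_j\varepsilon_j\langle X_j,u\rangle^2$ to a \emph{linear} Gaussian process whose supremum is exactly an operator norm of a Gaussian sum and hence has no log loss — together with the correct bookkeeping of the two regimes arising from the sub-exponential tails of $\langle X,u\rangle^2$. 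Everything else (the expectation-to-$L^p$ upgrade via Gaussian concentration for convex functions, and the lower bounds) is comparatively routine once the $p=1$ upper bound is in hand.
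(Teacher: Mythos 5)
This paper does not prove Theorem~\ref{th_operator}; it is stated in Section~\ref{Sec:Prelim} as a preliminary fact imported from the companion paper \cite{Koltchinskii_Lounici_arxiv}, so there is no in-document proof to compare against. Judging your sketch on its own terms, the overall architecture --- a sharp bound on $\E\|\hat\Sigma_n-\Sigma\|_\infty$, then Gaussian concentration to upgrade from $p=1$ to general $p$, plus separate lower bounds in the two regimes --- is the right one, and you correctly locate the crux in the dimension-free, logarithm-free expectation bound.

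The route you propose for that crux, however, does not deliver it. Applying symmetrization and then the contraction principle to pass from the quadratic process $\sup_{\|u\|\le1}\bigl|n^{-1}\sum_j\varepsilon_j\langle X_j,u\rangle^2\bigr|$ to a linear process forces you to pay a Lipschitz constant of order $\max_j\|X_j\|$, and in a Hilbert space the linear Rademacher/Gaussian sum satisfies $\E_\varepsilon\bigl\|\sum_j\varepsilon_j X_j\bigr\|\asymp\bigl(\sum_j\|X_j\|^2\bigr)^{1/2}$, which sees only the trace of the sample Gram, not its operator norm. Combining these with $\E\max_j\|X_j\|^2\lesssim\mathrm{tr}(\Sigma)+\|\Sigma\|_\infty\log n$ and $\E\sum_j\|X_j\|^2=n\,\mathrm{tr}(\Sigma)$, the contraction route yields at best
\begin{equation*}
\E\|\hat\Sigma_n-\Sigma\|_\infty\ \lesssim\ \frac{\mathrm{tr}(\Sigma)}{\sqrt n}+\sqrt{\frac{\|\Sigma\|_\infty\,\mathrm{tr}(\Sigma)\log n}{n}}\ =\ \|\Sigma\|_\infty\left(\frac{\mathbf{r}(\Sigma)}{\sqrt n}+\sqrt{\frac{\mathbf{r}(\Sigma)\log n}{n}}\right),
\end{equation*}
and the first term overshoots the target $\|\Sigma\|_\infty\bigl(\sqrt{\mathbf{r}(\Sigma)/n}\vee\mathbf{r}(\Sigma)/n\bigr)$ by a factor of order $\sqrt{\mathbf{r}(\Sigma)\wedge n}$ --- a polynomial loss, not a logarithm. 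The structural reason is that contraction destroys the tensor structure of $X_j\otimes X_j$ and in effect replaces $\|\hat\Sigma_n\|_\infty$ by $\mathrm{tr}(\hat\Sigma_n)$ in the variance. The noncommutative Khintchine/Bernstein route that you mention in passing does keep the operator structure and so gets the correct polynomial scaling, but then loses a genuine $\sqrt{\log}$ factor that cannot be removed from those inequalities. Obtaining the fully sharp bound of (\ref{E1/p}) therefore requires exploiting the specifically Gaussian form of the data, as is done in \cite{Koltchinskii_Lounici_arxiv} via Gordon--Chevet-type comparison for the Gaussian operator $\Gamma$ with columns $n^{-1/2}X_j$ (so that $\hat\Sigma_n=\Gamma\Gamma^*$) together with Gaussian concentration, not via symmetrization-plus-contraction.
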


We will also need a concentration inequality for  
$\|\hat\Sigma_n - \Sigma\|_{\infty}.$

\begin{theorem}
\label{spectrum_sharper} 
Let $X,X_1,\ldots,X_n$ be i.i.d. centered Gaussian random vectors in ${\mathbb H}$ with  covariance $\Sigma = \E(X\otimes X).$  Then, there exist a constant 
$C>0$ such that for all $t\geq 1$ with probability at least $1-e^{-t},$ 
\begin{align}
 \label{con_con}
\Bigl|\|\hat\Sigma_n - \Sigma\|_{\infty}- {\mathbb E}\|\hat\Sigma_n - \Sigma\|_{\infty}\Bigr| \leq C\|\Sigma\|_\infty
\biggl[\biggl(\sqrt{\frac{\mathbf{r}(\Sigma)}{n}}\bigvee 1\biggr)\sqrt{\frac{t}{n}}\bigvee \frac{t}{n} 
\biggr].
\end{align}
As a consequence of this bound and (\ref{E1/p}), with some constant $C>0$ and with the same probability 
\begin{align}
\label{sha_sha}
\|\hat\Sigma_n - \Sigma\|_{\infty} \leq C\|\Sigma\|_\infty
\biggl[\sqrt{\frac{\mathbf{r}(\Sigma)}{n}} \bigvee \frac{\mathbf{r}(\Sigma)}{n} 
\bigvee \sqrt{\frac{t}{n}}\bigvee \frac{t}{n} 
\biggr].
\end{align}

\end{theorem}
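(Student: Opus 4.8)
The plan is to derive the concentration inequality (\ref{con_con}) from the Gaussian concentration phenomenon applied to $\|\hat\Sigma_n-\Sigma\|_{\infty}$, viewed as a function of the standardized sample, and then to obtain (\ref{sha_sha}) by combining it with the moment bound (\ref{E1/p}) of Theorem \ref{th_operator}. Write $X_j=\Sigma^{1/2}Z_j$ with $Z_1,\dots,Z_n$ i.i.d.\ standard (isonormal) Gaussian vectors and set
\[
f(z_1,\dots,z_n):=\Bigl\|\tfrac1n\sum_{j=1}^n(\Sigma^{1/2}z_j)\otimes(\Sigma^{1/2}z_j)-\Sigma\Bigr\|_{\infty},
\]
so that $\|\hat\Sigma_n-\Sigma\|_{\infty}=f(Z_1,\dots,Z_n)$. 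Since $\hat\Sigma_n-\Sigma$ is self-adjoint and compact, one has the variational representation $f(z)=\sup_{\|u\|\le 1}\bigl|\tfrac1n\sum_{j}\langle\Sigma^{1/2}u,z_j\rangle^2-\langle\Sigma u,u\rangle\bigr|$, a supremum of smooth functions over a separable (hence essentially countable) index set; in particular $f$ is locally Lipschitz on $\mathbb H^n$.

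The key step is a \emph{self-bounded gradient} estimate. For a fixed unit vector $u$, the gradient in $z_j$ of $\tfrac1n\sum_k\langle\Sigma^{1/2}u,z_k\rangle^2$ equals $\tfrac2n\langle\Sigma^{1/2}u,z_j\rangle\,\Sigma^{1/2}u$, so the squared Euclidean norm of the full gradient equals $\tfrac4n\|\Sigma^{1/2}u\|^2\cdot\tfrac1n\sum_j\langle\Sigma^{1/2}u,z_j\rangle^2\le\tfrac{4\|\Sigma\|_{\infty}}{n}\langle\hat\Sigma_n u,u\rangle$. Evaluating at a (near-)maximizer $u^{\ast}$ and using $\langle\hat\Sigma_n u^{\ast},u^{\ast}\rangle\le\langle\Sigma u^{\ast},u^{\ast}\rangle+f(z)\le\|\Sigma\|_{\infty}+f(z)$, we get, at a.e.\ point,
\[
\|\nabla f(z)\|^2\ \le\ \frac{4\|\Sigma\|_{\infty}}{n}\bigl(\|\Sigma\|_{\infty}+f(z)\bigr)\ =\ a+b\,f(z),\qquad a:=\tfrac{4\|\Sigma\|_{\infty}^2}{n},\ \ b:=\tfrac{4\|\Sigma\|_{\infty}}{n}.
\]
A function satisfying $\|\nabla f\|^2\le a+bf$ obeys a two-sided sub-gamma concentration bound: via the Gaussian logarithmic Sobolev inequality and the Herbst argument applied to $\lambda\mapsto\log{\mathbb E}\,e^{\lambda(f-{\mathbb E}f)}$ one obtains ${\mathbb P}\{|f-{\mathbb E}f|\ge s\}\le 2\exp\bigl(-c\min\{s^2/(a+b\,{\mathbb E}f),\,s/b\}\bigr)$ for $s\ge 0$. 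Rewriting this as an $e^{-t}$ bound, and using (\ref{E1/p}) with $p=1$ to control $a+b\,{\mathbb E}f\le C\tfrac{\|\Sigma\|_{\infty}^2}{n}\bigl(1+{\bf r}(\Sigma)/n\bigr)\le C'\tfrac{\|\Sigma\|_{\infty}^2}{n}\bigl(\sqrt{{\bf r}(\Sigma)/n}\vee1\bigr)^2$, produces exactly the right-hand side of (\ref{con_con}).

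Finally, (\ref{sha_sha}) follows at once: on the event of (\ref{con_con}),
$\|\hat\Sigma_n-\Sigma\|_{\infty}\le{\mathbb E}\|\hat\Sigma_n-\Sigma\|_{\infty}+C\|\Sigma\|_{\infty}\bigl[(\sqrt{{\bf r}(\Sigma)/n}\vee1)\sqrt{t/n}\vee t/n\bigr]$; Theorem \ref{th_operator} bounds the first term by $C\|\Sigma\|_{\infty}(\sqrt{{\bf r}(\Sigma)/n}\vee{\bf r}(\Sigma)/n)$, while $\sqrt{ab}\le a\vee b$ gives $(\sqrt{{\bf r}(\Sigma)/n}\vee1)\sqrt{t/n}\le\sqrt{{\bf r}(\Sigma)/n}\vee{\bf r}(\Sigma)/n\vee\sqrt{t/n}\vee t/n$. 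I expect the main difficulty to be in the rigorous justification of the gradient bound for the supremum (its non-smoothness forces one to argue via a countable net or via a.e.\ differentiability of a supremum of $C^1$ functions) and in deriving the sub-gamma inequality with the correct dependence on $a$ and $b$, in particular for the lower tail; one also has to check that the standardization $X_j=\Sigma^{1/2}Z_j$ and the countable reduction of the supremum are legitimate in the infinite-dimensional setting, which they are since $\Sigma$ is trace class.
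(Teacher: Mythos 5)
The paper does not actually prove Theorem~\ref{spectrum_sharper}: it is stated as imported background from \cite{Koltchinskii_Lounici_arxiv}, so there is no ``paper's own proof'' to compare against here. Your argument is, however, essentially the route that the cited paper takes, and it is correct in its main lines. Writing $X_j=\Sigma^{1/2}Z_j$, computing the gradient of the quadratic form $g_u(z)=n^{-1}\sum_k\langle\Sigma^{1/2}u,z_k\rangle^2-\langle\Sigma u,u\rangle$ at a near-maximizing $u$, and observing that
$\|\nabla f\|^2\le\frac{4\|\Sigma\|_\infty}{n}\bigl(\|\Sigma\|_\infty+f\bigr)=a+bf$
is the right self-bounding estimate, and with $a=4\|\Sigma\|_\infty^2/n,\ b=4\|\Sigma\|_\infty/n$ and (\ref{E1/p}) for $p=1$ it gives exactly the right-hand side of (\ref{con_con}). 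The deduction of (\ref{sha_sha}) is a straightforward case analysis on whether ${\bf r}(\Sigma)/n$ is $\le1$ or $>1$, as you indicate.

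Two small remarks on the points you flagged. First, rather than going through log-Sobolev/Herbst and then worrying separately about the lower tail, the cleanest way to package $\|\nabla f\|^2\le a+bf$ is to set $g:=\sqrt{a+bf}$. Then wherever $f$ is differentiable, $\|\nabla g\|=\tfrac{b\|\nabla f\|}{2\sqrt{a+bf}}\le b/2$, so $g$ is globally $b/2$-Lipschitz in the product Hilbert norm; two-sided Gaussian Lipschitz concentration then immediately gives $\mathbb P\{|g-\mathbb Eg|\ge s\}\le 2e^{-2s^2/b^2}$, and squaring back yields $|f-\mathbb Ef|\lesssim\sqrt{(a+b\mathbb Ef)t}+bt$ (after absorbing the offset ${\rm Var}(g)/b\le b/4\le bt/4$ into the $bt$ term). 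This handles both tails at once. Second, the a.e.\ Rademacher differentiability of the locally Lipschitz supremum and the reduction of the sup over the unit ball to a countable dense subset are both routine; the passage to infinite dimensions can be done either by finite-rank approximation of $\Sigma$ and a limiting argument, or by working directly with the expansion $X_j=\sum_k\sqrt{\sigma_k}\xi_{jk}e_k$, both legitimate since $\Sigma$ is trace class. So the gaps you anticipated are real but all standard, and your proposal is a correct proof of the stated result.
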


\begin{remark}
\label{assumption1}

1. The notion of effective rank ${\bf r}(\Sigma)$ and the results of 
theorems \ref{th_operator} and \ref{spectrum_sharper} can be extended 
to the case of Gaussian random variables in separable Banach spaces,
see \cite{Koltchinskii_Lounici_arxiv}.

2. The bound of Theorem \ref{th_operator} and bound (\ref{sha_sha}) of Theorem \ref{spectrum_sharper} hold in a more general case, when $X,X_1,\dots, X_n$ are i.i.d. centered subgaussian 
vectors in ${\mathbb H},$ that is, for some constant $c>0,$
\begin{equation}\label{subexp1}
\|\langle X, u\rangle\|_{\psi_2}^2\leq c\E\langle X, u \rangle^2, 
u\in {\mathbb H}.
\end{equation}
Here $\|\cdot\|_{\psi_2}$ is the Orlicz norm for $\psi_2(t)=e^{t^2}-1, t\geq 0$
(the Orlicz norm in the space of subgaussian random variables).  
\end{remark}

\subsection{Several facts on perturbation theory}\label{sec:pert}

In this section, we discuss several useful results of perturbation theory 
(see Kato \cite{Kato}) adapted for our purposes. Some facts in the same direction can be found in Koltchinskii \cite{Koltchinskii} and Kneip and Utikal 
\cite{Kneip}.

Let $\Sigma :{\mathbb H}\mapsto {\mathbb H}$ be a compact symmetric operator 
(in applications, it will be the covariance operator of a random vector $X$ in ${\mathbb H}$).
Let $\sigma(\Sigma)$ be the spectrum of $\Sigma.$ It is well known that the following spectral representation holds 
$$
\Sigma = \sum_{r\geq 1}\mu_r P_r
$$
with distinct non-zero eigenvalues $\mu_r$ and spectral projectors 
$P_r$ and with the series converging in the operator norm. 
We will also use notations $\sigma_i=\sigma_i(\Sigma),$ $\Delta_r,$
$m_r,$ etc., already introduced in Section \ref{Sec:Intro}.

Define 
$$
g_r: = g_r(\Sigma) := \mu_r-\mu_{r+1}>0, r\geq 1. 
$$
Let $\bar g_r := \bar g_r(\Sigma):= \min(g_{r-1},g_r)$ for $r\geq 2$ and 
$\bar g_1:=g_1.$ In what follows, $\bar g_r$ will be called {\it the $r$-th  spectral gap, or the spectral gap of eigenvalue $\mu_r$}. 

Let now $\tilde \Sigma$ be another compact symmetric operator in ${\mathbb H}$
with spectrum $\sigma (\tilde \Sigma)$ and eigenvalues $\tilde \sigma_i=\sigma_i(\tilde \Sigma), i\geq 1$
(arranged in nonincreasing order and repeated with their multiplicities). Denote 
$E:=\tilde \Sigma-\Sigma.$ According to well known Lidskii's inequality,
$$
\sup_{j\geq 1}|\sigma_j(\Sigma)-\sigma_j(\tilde \Sigma)|\leq \sup_{j\geq 1}|\sigma_j(E)|
=\|E\|_{\infty}.
$$
This implies that, for all $r\geq 1,$
\begin{align*}
\inf_{j\not\in \Delta_r}|\tilde \sigma_j - \mu_r| \geq 
\bar g_r - \sup_{j\geq 1}|\tilde \sigma_j - \sigma_j| \geq 
\bar g_r -\|E\|_{\infty} 
\end{align*}
and 
\begin{align*}
\sup_{j\in \Delta_r}|\tilde \sigma_j - \mu_r| = \sup_{j\in \Delta_r}|\tilde \sigma_j-\sigma_j|\leq \|E\|_{\infty}. 
\end{align*}
Suppose that 
\begin{equation}
\label{bound_on_E}
\|E\|_{\infty}< \frac{\bar g_r}{2}.
\end{equation}
Then, all the eigenvalues $\tilde \sigma_j, j\in \Delta_r$ are covered by 
an interval 
$$\Bigl(\mu_r-\|E\|_{\infty},\mu_r+\|E\|_{\infty}\Bigr)\subset (\mu_r-\bar g_r/2,\mu_r+\bar g_r/2)$$ 
and the rest of the eigenvalues of 
$\tilde \Sigma$ are outside of the interval 
$$\Bigl(\mu_r-(\bar g_r-\|E\|_{\infty}),\mu_r+(\bar g_r-\|E\|_{\infty})\Bigr)
\supset [\mu_r-\bar g_r/2,\mu_r+\bar g_r/2].$$ 
Moreover, if 
$$
\|E\|_{\infty}<\frac{1}{4}\min_{1\leq s\leq r}\bar g_s=:\bar \delta_r,
$$
then the set $\{\sigma_j(\tilde \Sigma): j\in \bigcup_{s=1}^r\Delta_s\}$ of  the largest eigenvalues of $\tilde \Sigma$ will be divided into $r$ clusters,
each of them being of diameter strictly smaller than $2\bar \delta_r$ and the distance between any two clusters being larger than $2\bar \delta_r.$
In principle, this allows one to identify clusters of eigenvalues of $\tilde \Sigma$ corresponding to each of the $r$ largest distinct eigenvalues $\mu_s, s=1,\dots, r$
of $\Sigma.$ 

Denote $\tilde P_r$ the orthogonal projector
on the direct sum of eigenspaces of $\tilde \Sigma$ corresponding to the eigenvalues 
$\tilde \sigma_j, j\in \Delta_r$ 
(in other words, to the $r$-th cluster of eigenvalues of $\Sigma$). 
Denote also 
$$
C_r:=\sum_{s\neq r}\frac{1}{\mu_r-\mu_s}P_s.
$$

\begin{lemma}\label{lem-pert-spectral}
The following bound holds: 
\begin{align}
\label{bd_1}
\|\tilde P_r-P_r\|_\infty \leq 4\frac{\|E\|_{\infty}}{\bar g_r}.
\end{align}
Moreover, 
\begin{equation}
\label{perture}
\tilde P_r-P_r =L_r(E)+ S_r(E),
\end{equation}
where 
\begin{align}
\label{linear_perturb}
L_r(E):=C_r E P_r + P_r E C_r
\end{align}
and 
\begin{equation}
\label{remainder_A}
\|S_r(E)\|_{\infty}\leq 14 \biggl(\frac{\|E\|_{\infty}}{\bar g_r}\biggr)^2.
\end{equation}
\end{lemma}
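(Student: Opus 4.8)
The plan is to use the Riesz projector (holomorphic functional calculus) representation of spectral projectors together with the Neumann expansion of the resolvent, as in Kato \cite{Kato}. Fix $r$ and let $\gamma_r$ be the positively oriented circle of radius $\bar g_r/2$ centered at $\mu_r$ in the complex plane. By Lidskii's inequality and the bound on $\|E\|_{\infty}$ recorded just before the lemma, all eigenvalues $\tilde\sigma_j$ with $j\in\Delta_r$ lie strictly inside $\gamma_r$ while the remaining eigenvalues of $\tilde\Sigma$ lie strictly outside it; the same is trivially true for $\Sigma$. Consequently both
$$
P_r = -\frac{1}{2\pi i}\oint_{\gamma_r}(\Sigma - zI)^{-1}\,dz,
\qquad
\tilde P_r = -\frac{1}{2\pi i}\oint_{\gamma_r}(\tilde\Sigma - zI)^{-1}\,dz
$$
hold. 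Writing $R(z):=(\Sigma-zI)^{-1}$, I would observe that for $z\in\gamma_r$, by symmetry of $\Sigma$, $\|R(z)\|_{\infty}=\mathrm{dist}(z,\sigma(\Sigma))^{-1}=2/\bar g_r$, so $\|ER(z)\|_{\infty}\le 2\|E\|_{\infty}/\bar g_r<1$ and the Neumann series $(\tilde\Sigma-zI)^{-1}=R(z)\sum_{k\ge 0}(-1)^k(ER(z))^k$ converges uniformly on $\gamma_r$.

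The next step is to integrate this series term by term over $\gamma_r$. The $k=0$ term reproduces $P_r$, so
$$
\tilde P_r - P_r = -\frac{1}{2\pi i}\oint_{\gamma_r}\sum_{k\ge 1}(-1)^k R(z)\bigl(ER(z)\bigr)^k\,dz .
$$
For the $k=1$ term I would insert the spectral decomposition $R(z)=\sum_s(\mu_s-z)^{-1}P_s$, producing a sum over pairs $(s,t)$ of $P_sEP_t$ multiplied by the scalar $\frac{1}{2\pi i}\oint_{\gamma_r}\frac{dz}{(\mu_s-z)(\mu_t-z)}$. A residue computation shows this scalar is $0$ unless exactly one of $s,t$ equals $r$, in which case it equals $(\mu_r-\mu_t)^{-1}$ (resp.\ $(\mu_r-\mu_s)^{-1}$); summing over the surviving pairs gives exactly $C_rEP_r+P_rEC_r=L_r(E)$. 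Defining $S_r(E)$ to be the contribution of the remaining ($k\ge 2$) terms then yields the representation \eqref{perture}.

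It then remains to prove the two norm bounds by elementary estimation of contour integrals, using $\mathrm{length}(\gamma_r)=\pi\bar g_r$ and $\|R(z)\|_{\infty}=2/\bar g_r$ on $\gamma_r$. For \eqref{bd_1}, starting from $\tilde P_r-P_r=\frac{1}{2\pi i}\oint_{\gamma_r}(\tilde\Sigma-zI)^{-1}E R(z)\,dz$ and $\|(\tilde\Sigma-zI)^{-1}\|_{\infty}\le(\bar g_r/2-\|E\|_{\infty})^{-1}$ one gets $\|\tilde P_r-P_r\|_{\infty}\le\|E\|_{\infty}\bigl(\bar g_r/2-\|E\|_{\infty}\bigr)^{-1}$, which is at most $4\|E\|_{\infty}/\bar g_r$ under the assumed bound on $\|E\|_{\infty}$. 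For \eqref{remainder_A}, bounding the geometric tail gives
$$
\|S_r(E)\|_{\infty}\le\frac{1}{2\pi}\,\mathrm{length}(\gamma_r)\,\sup_{z\in\gamma_r}\sum_{k\ge 2}\|R(z)\|_{\infty}^{k+1}\|E\|_{\infty}^{k}
=\sum_{k\ge 2}\Bigl(\frac{2\|E\|_{\infty}}{\bar g_r}\Bigr)^{k}
=\frac{(2\|E\|_{\infty}/\bar g_r)^2}{1-2\|E\|_{\infty}/\bar g_r},
$$
which is bounded by $14(\|E\|_{\infty}/\bar g_r)^2$ once $\|E\|_{\infty}$ is suitably small relative to $\bar g_r$.

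I expect the only substantive step to be the term-by-term integration together with the residue identity that pins the first-order term down to exactly $L_r(E)=C_rEP_r+P_rEC_r$; the rest is bookkeeping. The two points needing care are: (i) verifying that $\gamma_r$ genuinely separates the perturbed cluster $\{\tilde\sigma_j:j\in\Delta_r\}$ from the rest of $\sigma(\tilde\Sigma)$ — precisely the content of the Lidskii-inequality discussion preceding the lemma — and (ii) tracking the numerical constants carefully enough that the Neumann tail yields the stated factors $4$ and $14$, which is exactly where the quantitative hypothesis on $\|E\|_{\infty}$ is consumed.
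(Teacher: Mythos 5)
Your overall strategy is the same as the paper's: Riesz projector formula on a circle $\gamma_r$ of radius $\bar g_r/2$, Neumann expansion of $(\tilde\Sigma - zI)^{-1}$, extraction of the first-order term via residues to recover $L_r(E)=C_rEP_r+P_rEC_r$, and geometric tail estimation for the remainder. The contour-integral identity for the $k=1$ term and the resolvent bound $\|R_\Sigma(z)\|_{\infty}\le 2/\bar g_r$ on $\gamma_r$ are exactly the paper's mechanism, and your arithmetic for the geometric tail is correct. (Incidentally, $\|R(z)\|_{\infty}$ is $\le 2/\bar g_r$ on $\gamma_r$, with equality only at the closest approach to an eigenvalue; you wrote equality, but the bound you actually use is right.)

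The gap is in the way you close the stated numerical constants. Your bound $\|\tilde P_r-P_r\|_{\infty}\le\|E\|_{\infty}(\bar g_r/2-\|E\|_{\infty})^{-1}$ is $\le 4\|E\|_{\infty}/\bar g_r$ only when $\|E\|_{\infty}\le\bar g_r/4$, and your tail bound $\frac{(2\|E\|_{\infty}/\bar g_r)^2}{1-2\|E\|_{\infty}/\bar g_r}$ is $\le 14(\|E\|_{\infty}/\bar g_r)^2$ only when $\|E\|_{\infty}\le(5/14)\bar g_r$. You flag these as requiring $\|E\|_{\infty}$ "suitably small," but the lemma is stated \emph{without} such a restriction. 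The paper closes the remaining range with the trivial projector inequality $\|\tilde P_r-P_r\|_{\infty}\le 1$ (which already dominates $4\|E\|_{\infty}/\bar g_r$ once $\|E\|_{\infty}>\bar g_r/4$), and, for $S_r(E)$, with the crude bound $\|S_r(E)\|_{\infty}\le\|\tilde P_r-P_r\|_{\infty}+\|L_r(E)\|_{\infty}\le 1+\sqrt{2}\,\|E\|_{\infty}/\bar g_r$, which is dominated by $14(\|E\|_{\infty}/\bar g_r)^2$ on the complementary range. You should add these two elementary observations to make the claimed constants hold unconditionally; without them the proof only covers a sub-range of $\|E\|_{\infty}$.
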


\begin{proof}
Assume first that $\|E\|_{\infty}\leq \bar g_r/4.$
Denote by $\gamma_r$ the circle in ${\mathbb C}$
with center $\mu_r$ and radius $\frac{\bar g_r}{2}.$ 
Note that the eigenvalues $\mu_r$ of $\Sigma$ and $\tilde \sigma_j, j\in \Delta_r$
of $\tilde \Sigma$ are inside this circle while the rest of the eigenvalues of these
operators are outside. 
Combining these facts with the Riesz formula for spectral projectors (see, for instance, \cite{Kato}, p.39), we get that
\begin{align*}
\tilde P_r = -\frac{1}{2\pi i} \oint_{\gamma_r} R_{\tilde\Sigma}(\eta)d\eta.
\end{align*}
where $R_A(\eta) = (A-\eta I)^{-1}$ is the resolvent of an operator $A$ in ${\mathbb H}.$  

The following computation is standard:
\begin{align}
\label{resolve}
R_{\tilde \Sigma}(\eta) &= R_{\Sigma+ E}(\eta) = (\Sigma+ E-\eta I)^{-1}\\
\nonumber
&= \left[ (\Sigma - \eta I)(I +(\Sigma - \eta I)^{-1}E)  \right]^{-1}\\
\nonumber
&= (I +R_{\Sigma}(\eta)E)^{-1}R_{\Sigma}(\eta)\\
\nonumber
&= \sum_{k\geq 0} (-1)^{k} [R_{\Sigma}(\eta)E]^k R_{\Sigma}(\eta),\quad \eta\in\gamma_r.
\end{align}
The series in the right hand side converges 
absolutely in the operator norm since 
$$
\| R_{\Sigma}(\eta)E \|_\infty \leq \|R_{\Sigma}(\eta)\|_\infty \|E\|_\infty \leq \frac{2}{\bar g_r}\|E\|_\infty \leq \frac{1}{2} <1,\quad \eta\in\gamma_r,
$$
where we have used that $\|R_{\Sigma}(\eta)\|_{\infty} \leq \frac{2}{\bar g_r}$ for any $\eta \in \gamma_r.$ 
Next, we get from (\ref{resolve}) that
\begin{align*}
\tilde P_r &= -\frac{1}{2\pi i} \oint_{\gamma_r} R_{\Sigma}(\eta)d\eta 
- \frac{1}{2\pi i} \oint_{\gamma_r}\sum_{k\geq 1} (-1)^{k} [R_{\Sigma}(\eta)E]^k R_{\Sigma}(\eta)d\eta\\
&=
P_r - \frac{1}{2\pi i} \oint_{\gamma_r}\sum_{k\geq 1} (-1)^{k} [R_{\Sigma}(\eta)E]^k R_{\Sigma}(\eta)d\eta,
\end{align*}
where we again used the Riesz formula. 
Thus, 
\begin{align*}
\|\tilde P_r - P_r\|_{\infty} &\leq 
2\pi\frac{\bar g_r}{2}\frac{1}{2\pi} \left(\frac{2}{\bar g_r}\right)^2\|E\|_{\infty}\sum_{k=0}^{\infty}\left(\frac{2}{\bar g_r}\|E\|_{\infty}\right)^k\\
&\leq \frac{2\|E\|_{\infty}/\bar g_r}{1- 2\|E\|_{\infty}/\bar g_r}.
\end{align*}
Under the assumption $\|E\|_{\infty}\leq \bar g_r/4,$ we get that 
\begin{equation}
\nonumber
\|\tilde P_r - P_r\|_{\infty} \leq 
\frac{4\|E\|_{\infty}}{\bar g_r},
\end{equation}
so, (\ref{bd_1}) holds in this case. 
Since $\tilde P_r, P_r$ are both orthogonal projectors, it is  
easy to see that $\|\tilde P_r - P_r\|_{\infty} \leq 1,$ implying that 
(\ref{bd_1}) also holds when $\|E\|_{\infty}> \bar g_r/4.$

We turn to the proof of the remaining bounds. 
It is easy to check (using the orthogonality of operators $C_r E P_r, P_r E C_r$)
that 
$$
\|L_r(E)\|_{\infty}=\|C_r E P_r + P_r E C_r\|_{\infty}\leq 
\sqrt{2}\|C_r\|_{\infty}\|E\|_{\infty} \leq \frac{\sqrt{2}}{\bar g_r}\|E\|_{\infty}.
$$
Therefore,
\begin{equation}
\label{trivia}
\|S_r(E)\|_{\infty}=\|\tilde P_r-P_r-L_r(E)\|_{\infty}
\leq \|\tilde P_r-P_r\|_{\infty} + \|L_r(E)\|_{\infty}
\leq 1+ \frac{\sqrt{2}}{\bar g_r}\|E\|_{\infty}.
\end{equation}

Assuming that $\|E\|_{\infty}\leq g_r/3,$
we have the following representation:
\begin{equation}
\hat P_r-P_r =L_r'(E)+ S_r'(E),
\end{equation}
where 
$$
L_r'(E)=
\frac{1}{2\pi i} \oint_{\gamma_r}R_{\Sigma}(\eta)ER_{\Sigma}(\eta)d\eta
$$
and 
$$
S_r'(E):=
- \frac{1}{2\pi i} \oint_{\gamma_r}\sum_{k\geq 2} (-1)^{k} [R_{\Sigma}(\eta)E]^k R_{\Sigma}(\eta)d\eta.
$$
As for the first order linear term $L_r'(E),$ we use the spectral representation 
of the resolvent $R_{\Sigma}(\eta),$
$$
R_{\Sigma}(\eta)=\sum_{j\geq 1} \frac{1}{\mu_j-\eta}P_j
$$ 
(with the series convergent in operator norm uniformly in $\eta\in \gamma_r$),
to derive that 
$$
L_r'(E)= \frac{1}{2\pi i} \oint_{\gamma_r}\sum_{j\geq 1}\frac{1}{\mu_j-\eta}P_j E
\sum_{j\geq 1}\frac{1}{\mu_j-\eta}P_j
d\eta
$$
$$
=\sum_{j_1,j_2\geq 1}
\frac{1}{2\pi i} \oint_{\gamma_r}\frac{d\eta}{(\mu_{j_1}-\eta)(\mu_{j_2}-\eta)}P_{j_1} E P_{j_2}.
$$
Note that, if $j_1=r, j_2=s\neq r,$ then, by Cauchy formula,
$$
\frac{1}{2\pi i}\oint_{\gamma_r}\frac{d\eta}{(\mu_{j_1}-\eta)(\mu_{j_2}-\eta)}=
\frac{1}{2\pi i}\oint_{\gamma_r}\frac{d\eta}{(\eta-\mu_r)(\eta-\mu_s)}=\frac{1}{\mu_r-\mu_s}.
$$
Similarly, if $j_2=r, j_1=s\neq r,$ then
$$
\frac{1}{2\pi i}\oint_{\gamma_r}\frac{d\eta}{(\mu_{j_1}-\eta)(\mu_{j_2}-\eta)}=
\frac{1}{\mu_r-\mu_s}.
$$
In all other cases, 
$$
\frac{1}{2\pi i}\oint_{\gamma_r}\frac{d\eta}{(\mu_{j_1}-\eta)(\mu_{j_2}-\eta)}=0.
$$
Therefore,
$$
L_r'(E)=\sum_{s\neq r} \frac{1}{\mu_r-\mu_s} P_sEP_r +\sum_{s\neq r} \frac{1}{\mu_r-\mu_s} P_rEP_s = C_r E P_r + P_r E C_r=L_r(E)
$$
and, as a consequence, $S_r'(E)=S_r(E).$
Similarly to (\ref{bd_1}), it can be proved that, under the assumption
$\|E\|_{\infty}\leq \bar g_r/3,$ 
\begin{equation}
\label{remainder_A'}
\|S_r(E)\|_{\infty}\leq 12\biggl(\frac{\|E\|_{\infty}}{\bar g_r}\biggr)^2.
\end{equation}
Bound (\ref{remainder_A}) now easily follows from (\ref{remainder_A'}) and 
(\ref{trivia}). 

\qed
\end{proof}

We will state below a simple generalization of Lemma \ref{lem-pert-spectral}. 
Given $I=\{r_1,r_1+1,\dots, r_2\}\subset {\mathbb N},$ $1\leq r_1\leq r_2,$
denote $\Delta_I:=\{j: \sigma_j=\mu_r, r\in I\}$ and let $P_I=\sum_{r\in I}P_r$
be the orthogonal projector on the direct sum of the eigenspaces of $\Sigma$ corresponding to the eigenvalues $\mu_r, r\in I.$ Denote $L_I:=\mu_{r_1}-\mu_{r_2}$
and define
$$
\bar g_I := \min\Bigl(\mu_{r_2}-\mu_{r_2+1},\mu_{r_1-1}-\mu_{r_1}\Bigr)\ {\rm if}\ r_1>1\ {\rm and}\ \bar g_I:=\mu_{r_2}-\mu_{r_2+1}\ {\rm if}\ r_1=1.
$$
Finally, let $\tilde P_I$ be the orthogonal projector on the direct sum of the eigenspaces of $\tilde \Sigma$ corresponding to the eigenvalues $\tilde \sigma_j, j\in \Delta_I.$
Note that, if $\|E\|_{\infty}<\bar g_I/2,$ then the set of eigenvalues $\{\tilde \sigma_j:j\in \Delta_I\}$ is covered by the interval 
$(\mu_{r_2}-\bar g_I/2, \mu_{r_1}+\bar g_I/2)$ and the rest of the eigenvalues of $\tilde \Sigma$ are outside of the interval $[\mu_{r_2}-\bar g_I/2, \mu_{r_1}+\bar g_I/2].$ Denote 
$$
\gamma_I:=\Bigl\{\eta\in {\mathbb C}: {\rm dist}(\eta;[\mu_{r_2},\mu_{r_1}])=\bar g_I/2\Bigr\}.
$$
In what follows, $\gamma_I$ will be viewed as a counter-clockwise contour and in (\ref{linear_perturb'}) below it can be replaced by an arbitrary contour $\gamma$ that separates the eigenvalues $\{\mu_r: r\in I\}$ from the rest of the spectrum of 
$\Sigma.$

\begin{lemma}\label{lem-pert-spectral-general}
The following bound holds: 
\begin{align}
\label{bd_1'}
\|\tilde P_I-P_I\|_\infty \leq 4\biggl(1+\frac{2}{\pi}\frac{L_I}{\bar g_I}\biggr)\frac{\|E\|_{\infty}}{\bar g_I}.
\end{align}
Moreover, the following representation holds
\begin{equation}
\label{perture'}
\tilde P_I-P_I =L_I(E)+ S_I(E),
\end{equation}
where the linear part $L_I(E)$ is given by 
\begin{align}
\label{linear_perturb'}
L_I(E):=\frac{1}{2\pi i} \oint_{\gamma_I}R_{\Sigma}(\eta)ER_{\Sigma}(\eta)d\eta
\end{align}
and the remainder $S_I(E)$ satisfies the bound
\begin{equation}
\label{remainder_A''}
\|S_I(E)\|_{\infty}\leq 15 \biggl(1+\frac{2}{\pi}\frac{L_I}{\bar g_I}\biggr)\biggl(\frac{\|E\|_{\infty}}{\bar g_I}\biggr)^2.
\end{equation}
\end{lemma}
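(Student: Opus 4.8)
The plan is to mimic the proof of Lemma \ref{lem-pert-spectral} almost verbatim, replacing the circle $\gamma_r$ by the ``stadium-shaped'' contour $\gamma_I$ surrounding the segment $[\mu_{r_2},\mu_{r_1}]$ at distance $\bar g_I/2$. First I would record the geometric facts already noted before the statement: under $\|E\|_\infty < \bar g_I/2$ the perturbed eigenvalues $\tilde\sigma_j, j\in\Delta_I$ stay inside $\gamma_I$ while all other eigenvalues of both $\Sigma$ and $\tilde\Sigma$ stay outside, so the Riesz formula gives $\tilde P_I = -\frac{1}{2\pi i}\oint_{\gamma_I} R_{\tilde\Sigma}(\eta)\,d\eta$ and likewise for $P_I$. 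I would also need the resolvent bound $\|R_\Sigma(\eta)\|_\infty \le 2/\bar g_I$ for $\eta\in\gamma_I$: this holds because for any eigenvalue $\mu_j$ of $\Sigma$, if $\mu_j\in[\mu_{r_2},\mu_{r_1}]$ then $|\mu_j-\eta|\ge \bar g_I/2$ (the contour is at distance exactly $\bar g_I/2$ from the segment), and if $\mu_j\notin[\mu_{r_2},\mu_{r_1}]$ then $\mu_j$ is on the far side of some endpoint and again $|\mu_j-\eta|\ge \bar g_I/2$ by the definition of $\bar g_I$ and a short case check.

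Next I would expand the resolvent as the Neumann series $R_{\tilde\Sigma}(\eta)=\sum_{k\ge 0}(-1)^k[R_\Sigma(\eta)E]^k R_\Sigma(\eta)$, valid on $\gamma_I$ since $\|R_\Sigma(\eta)E\|_\infty \le (2/\bar g_I)\|E\|_\infty < 1$ once, say, $\|E\|_\infty \le \bar g_I/4$. Integrating term by term, the $k=0$ term reproduces $P_I$ (Riesz again) and the $k=1$ term is exactly $L_I(E)=\frac{1}{2\pi i}\oint_{\gamma_I}R_\Sigma(\eta)E R_\Sigma(\eta)\,d\eta$; everything with $k\ge 2$ is $S_I(E)$. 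The only genuinely new ingredient relative to Lemma \ref{lem-pert-spectral} is the length of the contour: $\gamma_I$ is a segment-of-length-$2L_I$ ``racetrack'' plus two half-circles of radius $\bar g_I/2$, so its total length is $2L_I + \pi\bar g_I$, whereas for a circle it was just $\pi\bar g_I$. This is where the factor $\bigl(1+\tfrac{2}{\pi}\tfrac{L_I}{\bar g_I}\bigr)$ enters: $\frac{1}{2\pi}\cdot(2L_I+\pi\bar g_I) = \frac{\bar g_I}{2}\bigl(1+\tfrac{2}{\pi}\tfrac{L_I}{\bar g_I}\bigr)$. Then the standard geometric-series estimate
\begin{align*}
\|\tilde P_I - P_I\|_\infty &\le \frac{1}{2\pi}\,\mathrm{length}(\gamma_I)\cdot\Bigl(\frac{2}{\bar g_I}\Bigr)^2\|E\|_\infty \sum_{k\ge 0}\Bigl(\frac{2\|E\|_\infty}{\bar g_I}\Bigr)^k
= \frac{\bar g_I}{2}\Bigl(1+\frac{2}{\pi}\frac{L_I}{\bar g_I}\Bigr)\cdot\frac{4\|E\|_\infty/\bar g_I^2}{1-2\|E\|_\infty/\bar g_I}
\end{align*}
gives, under $\|E\|_\infty \le \bar g_I/4$, the bound $4\bigl(1+\tfrac{2}{\pi}\tfrac{L_I}{\bar g_I}\bigr)\|E\|_\infty/\bar g_I$, which is \eqref{bd_1'}; and since $\|\tilde P_I - P_I\|_\infty\le 1$ always (both are orthogonal projectors) while the right-hand side exceeds $1$ when $\|E\|_\infty > \bar g_I/4$, \eqref{bd_1'} holds unconditionally. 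For the remainder \eqref{remainder_A''} I would keep only the terms $k\ge 2$ in the same sum, pulling out an extra factor $2\|E\|_\infty/\bar g_I$, to get $\|S_I(E)\|_\infty \le \mathrm{const}\cdot\bigl(1+\tfrac{2}{\pi}\tfrac{L_I}{\bar g_I}\bigr)(\|E\|_\infty/\bar g_I)^2$ under $\|E\|_\infty\le\bar g_I/4$ (with constant coming out around $12$–$15$); for $\|E\|_\infty > \bar g_I/4$ one uses $\|S_I(E)\|_\infty \le \|\tilde P_I-P_I\|_\infty + \|L_I(E)\|_\infty \le 1 + \sqrt{2}\|E\|_\infty/\bar g_I$ together with the fact that $(\|E\|_\infty/\bar g_I)^2 > 1/16$, exactly as in \eqref{trivia}, absorbing the constant into the stated $15$.

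I do not expect a serious obstacle here — the argument is structurally identical to Lemma \ref{lem-pert-spectral} and the only real bookkeeping is the contour-length computation and propagating the resulting $\bigl(1+\tfrac{2}{\pi}\tfrac{L_I}{\bar g_I}\bigr)$ factor through the two geometric-series bounds. The one point that deserves a careful sentence is the claim $\|R_\Sigma(\eta)\|_\infty \le 2/\bar g_I$ on $\gamma_I$, since $\gamma_I$ is not a circle and one must verify $\mathrm{dist}(\eta,\sigma(\Sigma))\ge \bar g_I/2$ for every $\eta$ on it; this follows because the eigenvalues of $\Sigma$ in $[\mu_{r_2},\mu_{r_1}]$ are at distance $\ge\bar g_I/2$ from $\gamma_I$ by construction, and the eigenvalues outside that segment are, by the definition of $\bar g_I$ as the smaller of the two flanking gaps, at distance $\ge \bar g_I/2$ from the nearer endpoint, hence from $\gamma_I$ as well. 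Finally I would note — as the paper itself remarks after the statement — that the same representation \eqref{linear_perturb'} is valid with $\gamma_I$ replaced by any contour $\gamma$ separating $\{\mu_r:r\in I\}$ from the rest of $\sigma(\Sigma)$, by Cauchy's theorem applied to the analytic integrand, so one is free to use whichever contour is most convenient for subsequent estimates.
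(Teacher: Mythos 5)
The paper itself skips this proof, stating only that it "is quite similar to the proof of Lemma~\ref{lem-pert-spectral}," and your reconstruction is exactly the adaptation the authors had in mind: the Riesz integral over the stadium-shaped contour $\gamma_I$, the Neumann expansion of $R_{\tilde\Sigma}$ around $R_\Sigma$, and the extra $\bigl(1+\tfrac{2}{\pi}\tfrac{L_I}{\bar g_I}\bigr)$ factor arising from the contour length $2L_I+\pi\bar g_I$ are all the right ingredients, and your computation of that factor is correct.

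One small arithmetic point does not quite close as you wrote it. With the Neumann series summed under $\|E\|_\infty\le\bar g_I/4$ you get the constant $8$ in front of $\bigl(1+\tfrac{2}{\pi}\tfrac{L_I}{\bar g_I}\bigr)(\|E\|_\infty/\bar g_I)^2$, but then in the complementary regime $\|E\|_\infty>\bar g_I/4$ you only have $(\|E\|_\infty/\bar g_I)^2>1/16$, and $15/16<1$, so the crude bound $\|S_I(E)\|_\infty\le\|\tilde P_I-P_I\|_\infty+\|L_I(E)\|_\infty\ge 1$ is not dominated by $15\bigl(1+\tfrac{2}{\pi}\tfrac{L_I}{\bar g_I}\bigr)(\|E\|_\infty/\bar g_I)^2$ when $L_I=0$. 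The fix is the same as in the paper's own proof of Lemma~\ref{lem-pert-spectral}, which performs the series estimate under the slightly weaker restriction $\|E\|_\infty\le\bar g_r/3$: with $\|E\|_\infty\le\bar g_I/3$ the geometric sum is bounded by $3$ rather than $2$, yielding the constant $12$ in the well-separated regime, and then $(\|E\|_\infty/\bar g_I)^2>1/9$ in the complementary regime makes $15(\cdot)^2$ dominate $1+\|L_I(E)\|_\infty$ after one checks the quadratic inequality. A second, even smaller caution: the Lemma~\ref{lem-pert-spectral} bound $\|L_r(E)\|_\infty\le\sqrt{2}\,\|E\|_\infty/\bar g_r$ was derived from the explicit form $C_rEP_r+P_rEC_r$ for a single block and does not transfer verbatim to $L_I(E)$, which involves couplings between all $r\in I$ and $s\notin I$; the cleanest replacement is to bound $L_I(E)$ through the same contour integral, which gives $\|L_I(E)\|_\infty\le\tfrac{2}{\bar g_I}\bigl(1+\tfrac{2}{\pi}\tfrac{L_I}{\bar g_I}\bigr)\|E\|_\infty$ and still lets the $\|E\|_\infty>\bar g_I/3$ case go through with constant $15$.
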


The proof of this lemma is quite similar to the proof of Lemma \ref{lem-pert-spectral} and it will be skipped.

\section{Concentration Inequalities for Bilinear Forms of Empirical Spectral Projectors}\label{Sec:Representation}

Let $\hat P_r$ be the orthogonal projector on the direct sum 
of eigenspaces of $\hat \Sigma$ corresponding to the eigenvalues 
$\{\sigma_j(\hat \Sigma), j\in \Delta_r\}$ (in other words, to the $r$-th 
cluster of eigenvalues of $\hat \Sigma,$ see Section \ref{sec:pert}).

The goal of this section is to derive useful representations and concentration bounds for the bilinear forms
$\Bigl\langle(\hat P_r - P_r)u,v\Bigr\rangle, u,v\in {\mathbb H}$
of spectral projectors for a properly isolated eigenvalue $\mu_r.$
These results will 
be used in subsequent sections to show asymptotic normality 
of the bilinear forms $\Bigl\langle(\hat P_r - P_r)u,v\Bigr\rangle$
under the assumption that ${\bf r}(\Sigma)=o(n).$

In the results below, it will be assumed that, for some $\gamma \in (0,1)$,
\begin{align}\label{condition gamma_r-1}
\E \|\hat \Sigma - \Sigma\|_{\infty} \leq \frac{(1-\gamma)\bar g_r}{2}.
\end{align}
In view of Theorem \ref{th_operator}, this assumption implies that  
$$
\|\Sigma\|_{\infty}\biggl(\sqrt{\frac{{\bf r}(\Sigma)}{n}}\bigvee \frac{{\bf r}(\Sigma)}{n}\biggr)
\lesssim \frac{\bar g_r}{2}\leq \|\Sigma\|_{\infty}.
$$
Hence, we have ${\bf r}(\Sigma)\lesssim n$. Theorem \ref{spectrum_sharper} implies that for some constant $C'>0$ and for all $t\geq 1$ with probability at least $1-e^{-t}$
$$
\| \hat \Sigma -\Sigma\|_{\infty} \leq \E \| \hat \Sigma -\Sigma\|_{\infty}  + C' \|\Sigma\|_{
\infty}\left( \sqrt{\frac{t}{n}}\vee \frac{t}{n}  \right).
$$
If $$C' \|\Sigma\|_{\infty}\left(\sqrt{\frac{t}{n}} \vee \frac{t}{n} \right) \leq \frac{\gamma \bar g_r}{2},$$ then  $\mathbb{P}\left(\|\hat \Sigma_n-\Sigma\|_{\infty}<\frac{\bar g_r}{2}\right)\geq 1-e^{-t}$. It was pointed out in Section \ref{sec:pert} that, in this case,
the cluster $\{\sigma_j(\hat \Sigma_n):j\in \Delta_r\}$ of eigenvalues of 
$\hat \Sigma$ is well separated from the rest of the spectrum of $\hat \Sigma$
and the spectral projector $\hat P_r$ can be viewed as an estimator of the spectral projector $P_r$ (in particular, these two projectors are of the same rank $m_r$). 
It will be shown below that, under such an assumption, 
the bilinear form $\Bigl\langle(\hat P_r - P_r)u,v\Bigr\rangle$ can be represented as 
a sum of a part that is linear in $\hat \Sigma_n-\Sigma$ and a remainder that 
is smaller than the linear part, provided that ${\bf r}(\Sigma)=o(n).$  
The linear part is defined in terms of operator
$$
L_r:= C_r(\hat \Sigma-\Sigma)P_r+P_r(\hat \Sigma-\Sigma)C_r= n^{-1}\sum_{j=1}^n (C_r X_j\otimes P_r X_j+ P_r X_j \otimes C_r X_j)
$$
and the remainder in terms of operator
$$
R_r := (\hat P_r - P_r)-{\mathbb E}(\hat P_r-P_r)-L_r= \hat P_r-{\mathbb E}\hat P_r-L_r. 
$$

\begin{theorem}
\label{technical_1}
Suppose that, for some $\gamma\in (0,1),$ (\ref{condition gamma_r-1}) is satisfied.
Then, there exists a constant $D_{\gamma}>0$ such that, for all $u,v\in {\mathbb H},$ 
the following bound holds with probability at least $1-e^{-t}:$ 
\begin{equation}
\label{remaind}
|\langle R_r u,v\rangle |\leq 
D_\gamma \frac{\|\Sigma\|_{\infty}^2}{\bar g_r^2}
\biggl(\sqrt{\frac{{\bf r}(\Sigma)}{n}}\bigvee \sqrt{\frac{t}{n}}\bigvee \frac{t}{n}\biggr)
\sqrt{\frac{t}{n}}\|u\|\|v\|.
\end{equation}
\end{theorem}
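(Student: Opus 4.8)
The plan is to decompose the event into a "good" part, where $\|\hat\Sigma-\Sigma\|_\infty$ is small enough for the perturbation expansion of Lemma~\ref{lem-pert-spectral} to apply, and a "bad" part of exponentially small probability, and then control $R_r$ on the good part by combining the deterministic bound~(\ref{remainder_A}) on $S_r(E)$ with a concentration inequality for the bilinear form of the linear term $L_r(E)$. Write $E=\hat\Sigma-\Sigma$. On the event $\mathcal{A}_t:=\{\|E\|_\infty\le \bar g_r/2\}$ (which, as argued in the text preceding the theorem, holds with probability at least $1-e^{-t}$ once $C'\|\Sigma\|_\infty(\sqrt{t/n}\vee t/n)\le\gamma\bar g_r/2$, i.e.\ for $t$ up to order $n\wedge n(\bar g_r/\|\Sigma\|_\infty)^2$), we have by Lemma~\ref{lem-pert-spectral} that $\hat P_r-P_r = L_r(E)+S_r(E)$ with $\|S_r(E)\|_\infty\le 14(\|E\|_\infty/\bar g_r)^2$. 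Hence, using that $L_r=L_r(E)$ by definition, $R_r = S_r(E)-\E(\hat P_r-P_r-L_r)$ on $\mathcal{A}_t$; the bilinear form of the deterministic correction term can itself be bounded by the (deterministic) expectation bound derived in Section~\ref{Sec:Bias} (or, more directly, by integrating the tail bound~(\ref{bd_1}) and the bound on $\|S_r(E)\|_\infty$ together with Theorem~\ref{th_operator}), yielding a term of order $\|\Sigma\|_\infty^2\bar g_r^{-2}\,{\bf r}(\Sigma)/n$, which is absorbed into the right-hand side of~(\ref{remaind}). So the substance is to bound $|\langle S_r(E)u,v\rangle|$ on $\mathcal{A}_t$.

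For $S_r(E)$ the naive bound $\|S_r(E)\|_\infty\,\|u\|\,\|v\|\le 14(\|E\|_\infty/\bar g_r)^2\|u\|\|v\|$ gives, via Theorem~\ref{spectrum_sharper} applied to $\|E\|_\infty$, exactly a bound of the form $\|\Sigma\|_\infty^2\bar g_r^{-2}\big(\sqrt{{\bf r}(\Sigma)/n}\vee\sqrt{t/n}\vee t/n\big)^2\|u\|\|v\|$. This is \emph{not} quite~(\ref{remaind}): we need one of the two factors to be a bare $\sqrt{t/n}$ rather than the maximum. The key point is therefore that $S_r(E)$ is a genuinely \emph{second-order} quantity in $E$, so one factor of $\|E\|_\infty$ should always produce its \emph{typical} size $\asymp\|\Sigma\|_\infty\sqrt{{\bf r}(\Sigma)/n}$ (from Theorem~\ref{th_operator}, since $\sqrt{{\bf r}(\Sigma)/n}\gtrsim{\bf r}(\Sigma)/n$ under~(\ref{condition gamma_r-1})), while only the \emph{other} factor carries the genuine fluctuation $\sqrt{t/n}$. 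Concretely, from the Neumann-series representation in the proof of Lemma~\ref{lem-pert-spectral}, $S_r(E)=-\frac{1}{2\pi i}\oint_{\gamma_r}\sum_{k\ge 2}(-1)^k[R_\Sigma(\eta)E]^kR_\Sigma(\eta)\,d\eta$; bounding the $k\ge 3$ terms trivially by $(\|E\|_\infty/\bar g_r)^3$ (negligible for our purposes since, under~(\ref{condition gamma_r-1}), $(\|E\|_\infty/\bar g_r)$ typically contributes an extra $\sqrt{{\bf r}(\Sigma)/n}=o(1)$), the main term is the $k=2$ contour integral, which equals a sum $\sum_{j_1,j_2,j_3} c_{j_1j_2j_3}\,P_{j_1}EP_{j_2}EP_{j_3}$ with the contour integrals $c_{j_1j_2j_3}$ computed by residues exactly as in the $L_r'(E)$ computation. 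So $\langle S_r(E)u,v\rangle$ is, up to higher-order terms, a bilinear-in-$E$ expression $\langle E a, b\rangle$-type sum where one of the two $E$'s is "tied up" with the fixed vectors $u,v$ (giving vectors of bounded norm) and the other $E$ appears sandwiched between bounded operators, hence contributes $\|E\|_\infty$.

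To turn this into~(\ref{remaind}), I would proceed as follows. Condition on the event $\mathcal{B}:=\{\|E\|_\infty\le 2\E\|E\|_\infty\vee(\text{const})\|\Sigma\|_\infty\sqrt{t/n}\}$; by Theorem~\ref{spectrum_sharper} $\P(\mathcal{B}^c)\le e^{-t}$, and on $\mathcal{B}$ we have $\|E\|_\infty\lesssim \|\Sigma\|_\infty(\sqrt{{\bf r}(\Sigma)/n}\vee\sqrt{t/n})$. Freeze this bound as a deterministic constant $\rho$. Then, writing the $k=2$ term of $S_r(E)$ schematically as a bilinear form $B(E,E)$ in the Hilbert--Schmidt sense and isolating the dependence on one copy of $E$, bound $|\langle S_r(E)u,v\rangle|\le \|E\|_\infty\cdot\Phi(E)$ where $\Phi(E)=|\langle (\text{linear-in-}E\text{ operator built from }u,v,\text{ contours})\,\cdot,\cdot\rangle|$ is a \emph{linear} form in the i.i.d.\ summands $n^{-1}\sum_j (X_j\otimes X_j-\Sigma)$ paired against fixed vectors of norm $\lesssim\|\Sigma\|_\infty\bar g_r^{-2}\|u\|\|v\|$. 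Such a linear form is a quadratic form in jointly Gaussian variables, so the Hanson--Wright / Gaussian chaos concentration (equivalently, Bernstein-type bounds for quadratic forms of Gaussians) gives $|\Phi(E)|\lesssim \|\Sigma\|_\infty^2\bar g_r^{-2}\|u\|\|v\|\big(\sqrt{t/n}\vee t/n\big)$ with probability $\ge 1-e^{-t}$ — note this is the factor that legitimately carries the maximum with $t/n$, but crucially \emph{no} ${\bf r}(\Sigma)/n$ term, because $\Phi$ is linear, not of operator-norm type. Multiplying the two bounds (one per factor of $E$) and intersecting the events gives, with probability $\ge 1-3e^{-t}$ (rescale $t$),
\[
|\langle S_r(E)u,v\rangle|\lesssim \frac{\|\Sigma\|_\infty^2}{\bar g_r^2}\Bigl(\sqrt{\tfrac{{\bf r}(\Sigma)}{n}}\vee\sqrt{\tfrac{t}{n}}\vee\tfrac{t}{n}\Bigr)\sqrt{\tfrac{t}{n}}\,\|u\|\,\|v\|,
\]
which is~(\ref{remaind}); the $k\ge 3$ tail of the Neumann series contributes $(\rho/\bar g_r)^3$, smaller by a factor $\rho/\bar g_r\lesssim \sqrt{{\bf r}(\Sigma)/n}\vee\sqrt{t/n}$, hence already dominated. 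I expect the main obstacle to be the bookkeeping in the third paragraph: cleanly separating one factor of $E$ to be estimated in operator norm from the other to be estimated as a Gaussian quadratic form, while keeping track of the contour integrals and the dependence of all the fixed vectors/operators on $u,v$ and on $\bar g_r$ — and making sure the Gaussian-chaos concentration is applied to the \emph{correct} linearized object so that it really does produce a bare $\sqrt{t/n}$ rather than reintroducing an effective-rank term. Everything else (the reduction to $\mathcal{A}_t$, absorbing the expectation correction, handling the higher-order Neumann terms) is routine given Lemmas~\ref{lem-pert-spectral} and Theorems~\ref{th_operator}--\ref{spectrum_sharper}.
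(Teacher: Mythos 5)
Your first paragraph (reduce to controlling $\langle S_r(E)u,v\rangle-\E\langle S_r(E)u,v\rangle$ on the event $\{\|E\|_\infty\le \bar g_r/2\}$) and your identification of the target heuristic (one factor of $E$ should yield its typical size $\asymp\|\Sigma\|_\infty\sqrt{{\bf r}(\Sigma)/n}$, the other the fluctuation $\sqrt{t/n}$) are both correct. The gap is in how you propose to realize that heuristic: the factorization
$$
|\langle S_r(E)u,v\rangle|\ \le\ \|E\|_\infty\cdot \Phi(E),
$$
with $\Phi$ a bilinear form in $E$ paired against \emph{fixed} vectors, does not exist. $S_r(E)$ is genuinely quadratic in the single random matrix $E$. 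In a monomial $\langle P_{j_1}EP_{j_2}EP_{j_3}u,v\rangle$ there are exactly two ways to peel off "one linear form in $E$": either you write it as $\langle E\,(P_{j_2}EP_{j_3}u),\,P_{j_1}v\rangle$, in which case the coefficient vector $P_{j_2}EP_{j_3}u$ is itself random and depends on $E$, so $\Phi$ is \emph{not} a Gaussian chaos with fixed coefficients and Hanson--Wright does not apply; or you write it as $\langle P_{j_2}EP_{j_3}u,\ EP_{j_1}v\rangle$ and bound by $\|EP_{j_3}u\|\cdot\|EP_{j_1}v\|$, a product of two quantities $\|Ew\|$ with fixed $w$. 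But each such $\|Ew\|$ has
$$
\E\|Ew\|^2=\frac{\langle\Sigma w,w\rangle\,\mathrm{tr}(\Sigma)+\langle\Sigma^2w,w\rangle}{n}\ \asymp\ \frac{\|\Sigma\|_\infty^2\,{\bf r}(\Sigma)}{n}\,\|w\|^2,
$$
because it is a supremum of a centered Gaussian chaos over a full unit ball — so although it concentrates around its mean at scale $\|\Sigma\|_\infty\sqrt{t/n}$, its mean already carries $\sqrt{{\bf r}(\Sigma)/n}$. Multiplying two such factors reproduces the trivial $\|\Sigma\|_\infty^2{\bf r}(\Sigma)/n$ bound (i.e.\ $\|S_r(E)\|_\infty\lesssim(\|E\|_\infty/\bar g_r)^2$), not (\ref{remaind}). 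In short, neither split isolates a factor that genuinely delivers a bare $\sqrt{t/n}$; the Hanson--Wright step as written cannot be executed, and the bookkeeping you flag as the "main obstacle" hides an impossibility rather than a technicality.

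The paper achieves your heuristic by a route that does not require algebraically decoupling the two $E$'s. It introduces a smooth truncation $f(X_1,\dots,X_n)=\langle S_r(E)u,v\rangle\,\varphi(\|E\|_\infty/\delta)$ and, crucially, bounds its \emph{Lipschitz constant} as a function of $(X_1,\dots,X_n)$ (Lemma \ref{Lipschitz_constant}), via the estimate $\|S_r(E)-S_r(E')\|_\infty\lesssim_\gamma(\delta/\bar g_r^2)\,\|E-E'\|_\infty$ valid when both $\|E\|_\infty,\|E'\|_\infty\lesssim\delta$. Here the typical-size factor $\delta\asymp\|\Sigma\|_\infty\sqrt{{\bf r}(\Sigma)/n}$ appears as the \emph{modulus of the Fréchet derivative} of $S_r$, not as a separated factor of $E$, and $\|E-E'\|_\infty$ is bounded by $\|\Sigma\|_\infty^{1/2}n^{-1/2}(\sum\|X_j-X_j'\|^2)^{1/2}$ up to lower-order terms. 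Then the Gaussian isoperimetric inequality (Lemma \ref{Gaussian_concentration}) applied to this Lipschitz function delivers the dimension-free $\sqrt{t}$ fluctuation, and multiplying by the Lipschitz constant produces exactly $\frac{\|\Sigma\|_\infty^2}{\bar g_r^2}\bigl(\sqrt{{\bf r}(\Sigma)/n}\vee\sqrt{t/n}\bigr)\sqrt{t/n}$. This is the ingredient your outline is missing: the second-order smallness must be exploited through the Lipschitz/isoperimetric route (or, alternatively, through a genuine Gaussian-chaos decoupling argument with higher moments of derivatives, which is considerably heavier than what you sketched), not through a naive factorization into operator-norm times fixed-vector linear form.
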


Taking into account Theorem \ref{spectrum_sharper}, note that,
if $\Sigma=\Sigma^{(n)},$
$\|\Sigma^{(n)}\|_{\infty}=O(1),$ $\bar g_r=\bar g_r^{(n)}$ is bounded away from zero and 
${\bf r}(\Sigma^{(n)})\leq cn$ for a sufficiently small $c,$ then bound (\ref{remaind}) implies that
$$
\langle R_r u,v\rangle =O_{\mathbb P}(n^{-1/2})\ {\rm as}\ n\to\infty, u,v\in {\mathbb H}.
$$
Moreover, if ${\bf r}(\Sigma^{(n)})=o(n),$ 
it follows from (\ref{remaind})
that 
\begin{equation}
\label{Rr}
\langle R_r u,v\rangle =o_{\mathbb P}(n^{-1/2}).
\end{equation}

Let 
$$
\xi(u,v):= 
\langle X,P_r v\rangle \langle X, C_r u\rangle, u,v\in {\mathbb H}
$$ 
and let 
$$
\xi_j(u,v):=\langle X_j,P_r v\rangle \langle X_j, C_r u\rangle, u,v\in {\mathbb H}, j=1,\dots, n
$$
be independent copies of $\xi.$
Note that 
$$
{\mathbb E}\xi(u,v)={\mathbb E}\langle X,P_r v\rangle 
{\mathbb E}\langle X, C_ru\rangle
=0
$$
and 
$$
{\mathbb E}\xi(u,v)\xi(u',v')={\mathbb E}\langle X,P_r v\rangle \langle X,P_r v'\rangle
{\mathbb E}\langle X, C_ru\rangle\langle X, C_ru'\rangle 
$$
$$
=\langle P_r\Sigma P_r v,v'\rangle \langle C_r\Sigma C_ru,u'\rangle,
$$
where it was used that Gaussian random variables $\langle X,P_r v\rangle, \langle X, C_r u\rangle$ are uncorrelated and, hence, independent. 
This implies that the covariance function of the random field $\xi(u,v)+\xi(v,u), u,v\in {\mathbb H}$ is given by
\begin{align}
\nonumber
&\tilde \Gamma (u,v;u',v'):=
{\mathbb E}(\xi(u,v)+\xi(v,u))(\xi(u',v')+\xi(v',u'))
=
\\
\nonumber
&
=
\langle P_r\Sigma P_r v,v'\rangle \langle C_r\Sigma C_ru,u'\rangle
+ 
\langle P_r\Sigma P_r v,u'\rangle \langle C_r\Sigma C_ru,v'\rangle
\\
\nonumber
&
+
\langle P_r\Sigma P_r u,u'\rangle \langle C_r\Sigma C_rv,v'\rangle
+
\langle P_r\Sigma P_r u,v'\rangle \langle C_r\Sigma C_rv,u'\rangle.
\end{align}
The bilinear forms 
$$
n^{1/2}\Bigl\langle L_r u,v\Bigr\rangle =n^{-1/2}\sum_{j=1}^n (\xi_j(u,v)+\xi_j(v,u)),\ u,v\in {\mathbb H}
$$
have the same covariance function $\tilde \Gamma.$  
Moreover, it is easy to see that, under proper assumptions, 
they are asymptotically normal. Thus, (\ref{Rr}) implies the asymptotic 
normality of $\Bigl\langle \hat P_r-{\mathbb E}\hat P_r u,v\Bigr\rangle ,u,v\in {\mathbb H}.$
This result will be discussed in detail in the next section.

The next statement immediately follows from Theorem \ref{technical_1}
and Bernstein inequality for sums of i.i.d. subexponential random variables 
$\xi_{j}(u,v), j=1,\dots, n.$ In particular, it shows that, under the assumptions of Theorem \ref{technical_1}, 
$$
\Bigl\langle \hat P_r-{\mathbb E}\hat P_r u,v\Bigr\rangle =O_{\mathbb P}(n^{-1/2})\ 
{\rm as}\ n\to\infty, u,v\in {\mathbb H}.
$$

\begin{corollary}
\label{concent_bilin}
Under the assumption of Theorem \ref{technical_1}, with some constants $D, D_{\gamma}>0,$ for all $u,v\in {\mathbb H}$ and for all $t\geq 1$ with probability at least $1-e^{-t},$
\begin{align}
\label{bdPr}
&
\Bigl|\Bigl\langle \hat P_r-{\mathbb E}\hat P_r u,v\Bigr\rangle\Bigr|
\leq D \frac{\|\Sigma\|_{\infty}}{\bar g_r}\sqrt{\frac{t}{n}}\|u\|\|v\|\notag\\
&\hspace{3cm}+
D_\gamma \frac{\|\Sigma\|_{\infty}^2}{\bar g_r^2}\biggl(\sqrt{\frac{{\bf r}(\Sigma)}{n}}\bigvee \sqrt{\frac{t}{n}}\bigvee \frac{t}{n}\biggr)
\sqrt{\frac{t}{n}}\|u\|\|v\|.
\end{align}
\end{corollary}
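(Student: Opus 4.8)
The plan is to decompose the bilinear form $\langle(\hat P_r-\E\hat P_r)u,v\rangle$ into the linear term plus the remainder, i.e.\ write
\begin{align*}
\langle(\hat P_r-\E\hat P_r)u,v\rangle=\langle L_r u,v\rangle+\langle R_r u,v\rangle,
\end{align*}
which holds by the very definition of $R_r$ given just before Theorem \ref{technical_1}. The second summand is controlled directly by Theorem \ref{technical_1}: on an event of probability at least $1-e^{-t}$ it is bounded by
\begin{align*}
D_\gamma\frac{\|\Sigma\|_\infty^2}{\bar g_r^2}\Bigl(\sqrt{\tfrac{{\bf r}(\Sigma)}{n}}\bigvee\sqrt{\tfrac{t}{n}}\bigvee\tfrac{t}{n}\Bigr)\sqrt{\tfrac{t}{n}}\|u\|\|v\|,
\end{align*}
which is precisely the second line of \eqref{bdPr}. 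So the entire task reduces to bounding the linear term $\langle L_r u,v\rangle$.

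For the linear term I would use the identity displayed before the corollary, namely
\begin{align*}
n^{1/2}\langle L_r u,v\rangle=n^{-1/2}\sum_{j=1}^n\bigl(\xi_j(u,v)+\xi_j(v,u)\bigr),
\end{align*}
where $\xi_j(u,v)=\langle X_j,P_r v\rangle\langle X_j,C_r u\rangle$ are i.i.d.\ centered. Each $\xi_j(u,v)$ is a product of two jointly Gaussian (in fact independent, since uncorrelated) linear functionals of $X_j$, hence a subexponential random variable. Quantitatively, $\|\langle X_j,P_r v\rangle\|_{\psi_2}\lesssim(\langle P_r\Sigma P_r v,v\rangle)^{1/2}\le\|\Sigma\|_\infty^{1/2}\|v\|$ and $\|\langle X_j,C_r u\rangle\|_{\psi_2}\lesssim(\langle C_r\Sigma C_r u,u\rangle)^{1/2}$; since $C_r=\sum_{s\ne r}(\mu_r-\mu_s)^{-1}P_s$, one has $\langle C_r\Sigma C_r u,u\rangle=\sum_{s\ne r}\mu_s(\mu_r-\mu_s)^{-2}\langle P_s u,u\rangle\le\|\Sigma\|_\infty\bar g_r^{-2}\|u\|^2$, so $\|\langle X_j,C_r u\rangle\|_{\psi_2}\lesssim\|\Sigma\|_\infty^{1/2}\bar g_r^{-1}\|u\|$. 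Consequently $\|\xi_j(u,v)\|_{\psi_1}\lesssim\|\Sigma\|_\infty\bar g_r^{-1}\|u\|\|v\|$ and the same bound holds for $\xi_j(v,u)$. The variance is likewise $\mathrm{Var}(\xi_j(u,v)+\xi_j(v,u))\le\tilde\Gamma(u,v;u,v)\lesssim\|\Sigma\|_\infty^2\bar g_r^{-2}\|u\|^2\|v\|^2$.

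With these two estimates in hand, Bernstein's inequality for sums of i.i.d.\ subexponential random variables yields that, with probability at least $1-e^{-t}$,
\begin{align*}
|\langle L_r u,v\rangle|\lesssim\frac{\|\Sigma\|_\infty}{\bar g_r}\sqrt{\frac{t}{n}}\|u\|\|v\|+\frac{\|\Sigma\|_\infty}{\bar g_r}\frac{t}{n}\|u\|\|v\|,
\end{align*}
and since under \eqref{condition gamma_r-1} we have $t/n\le\sqrt{t/n}$ in the regime where the bound is informative (or one simply absorbs the $t/n$ term into the second line of \eqref{bdPr}, which already contains a $\tfrac{\|\Sigma\|_\infty^2}{\bar g_r^2}\tfrac{t}{n}\sqrt{t/n}$ contribution dominating it when $\bar g_r\lesssim\|\Sigma\|_\infty$), this gives the first line $D\tfrac{\|\Sigma\|_\infty}{\bar g_r}\sqrt{t/n}\|u\|\|v\|$ of \eqref{bdPr}. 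Combining the two bounds via a union bound (adjusting $t$ by a constant, or replacing $e^{-t}$ by $2e^{-t}$ and renaming) completes the proof. The only mildly delicate point is the bookkeeping of which error terms land on which line of \eqref{bdPr} and verifying that the leftover $\tfrac{\|\Sigma\|_\infty}{\bar g_r}\tfrac{t}{n}$ piece from Bernstein is genuinely dominated by a term already present in \eqref{bdPr}; this uses $\bar g_r\le 2\|\Sigma\|_\infty$, which follows from \eqref{condition gamma_r-1} and Theorem \ref{th_operator} as noted in the text after \eqref{condition gamma_r-1}. Everything else is routine.
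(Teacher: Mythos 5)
Your proof is correct and follows exactly the route the paper indicates: the paper itself only says the corollary ``immediately follows from Theorem \ref{technical_1} and Bernstein inequality for sums of i.i.d.\ subexponential random variables $\xi_j(u,v)$,'' and you have filled in precisely that argument — decompose into $L_r+R_r$, invoke Theorem \ref{technical_1} for $R_r$, compute $\psi_2$ norms of $\langle X_j,P_r v\rangle$ and $\langle X_j,C_r u\rangle$ (using $\|C_r\|_\infty\le 1/\bar g_r$) to get a $\psi_1$ bound for $\xi_j$, apply Bernstein, and absorb the residual $\frac{\|\Sigma\|_\infty}{\bar g_r}\frac{t}{n}$ term into \eqref{bdPr} using $\bar g_r\le\|\Sigma\|_\infty$. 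The bookkeeping in the last step is handled correctly (for $t\le n$ the term is dominated by the first line; for $t\ge n$ by the second, since $\frac{\|\Sigma\|_\infty^2}{\bar g_r^2}\ge\frac{\|\Sigma\|_\infty}{\bar g_r}$).
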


\begin{remark}
\label{rema_Cor1}
Note that $\xi_j(u,v)=0, j=1,\dots, n$ in the case when both $u$ and $v$ belong 
to the eigenspace corresponding to the eigenvalue $\mu_r$ (since, in this case,
$C_r u=C_rv=0$), or in the case when both $u$ and $v$ are in the orthogonal complement 
of this space (since then $P_ru=P_rv=0$). Therefore, for such $u,v$ the first term in the right hand side of (\ref{bdPr}) could be dropped and the bound reduces only to the second term.
\end{remark}

We now turn to the proof of Theorem \ref{technical_1}.

\begin{proof}
Clearly, it will be enough to prove bound (\ref{remaind}) for $\|u\|\leq 1, \|v\|\leq 1.$ This will be assumed throughout the proof.

First note that $L_r=L_r(E),$ where $E:=\hat \Sigma-\Sigma.$ Since 
$$
{\mathbb E}L_r={\mathbb E}L_r(E)=0,
$$
we get that
$$
R_r=L_r(E)+S_r(E)-{\mathbb E}(L_r(E)+S_r(E))-L_r(E)=S_r(E)-{\mathbb E}S_r(E)
$$
(recall Lemma \ref{lem-pert-spectral}).

Under condition (\ref{condition gamma_r-1}), we have $\mathbf{r}(\Sigma) \lesssim n$. Theorem \ref{spectrum_sharper} implies that
$$
\|\hat \Sigma - \Sigma\|_{\infty} \leq \E \|\hat \Sigma - \Sigma\|_{\infty} + C' \|\Sigma\|_{\infty} \left( \sqrt{\frac{t}{n}} \vee \frac{t}{n}  \right).
$$
If $C' \|\Sigma\|_{\infty} \left( \sqrt{\frac{t}{n}} \vee \frac{t}{n} \right) \leq \frac{\gamma \bar g_r }{4}$, then it is easy to see that $t\lesssim n$ and, for some $C>0$, 
$$
C' \|\Sigma\|_{\infty} \left( \sqrt{\frac{t}{n}} \vee \frac{t}{n} \right)  \leq C \|\Sigma\|_{\infty} \sqrt{\frac{t}{n}}.
$$
We will assume first that
\begin{align}\label{condition gamma_r-2}
C \|\Sigma\|_{\infty} \sqrt{\frac{t}{n}} \leq \frac{\gamma \bar g_r}{4}
\end{align}
(the proof of the concentration bound in the opposite case will be much easier).

Let 
\begin{align}\label{delta_n-t}
\delta_n(t):={\mathbb E}\|\hat \Sigma_n-\Sigma\|_{\infty}+C\|\Sigma\|_{\infty}
\sqrt{\frac{t}{n}}.
\end{align}
Theorem \ref{spectrum_sharper} gives that $\mathbb P \left\{ \|\hat \Sigma - \Sigma\|_{\infty} \geq \delta_n(t)\right\}\leq e^{-t}.$

The main part of the proof is the study of concentration of the random variable 
$\langle S_r(E)u,v\rangle$ around its expectation. To this end, 
we first study the concentration properties of ``truncated'' random variable  
$$
\Bigl\langle S_r(E)u,v\Bigr\rangle \varphi\Bigl(\frac{\|E\|_{\infty}}{\delta}\Bigr),
$$ 
where, for some $\gamma \in (0,1),$ $\varphi$ is a Lipschitz function with constant $\frac{1}{\gamma}$ on ${\mathbb R}_{+},$ $0\leq \varphi (s)\leq 1,$
$\varphi (s)=1, s\leq 1,$ $\varphi(s)=0, s>1+\gamma,$
and $\delta>0$ is such that $\|E\|_{\infty}\leq \delta$
with a high probability.

Our main tool is 
the following concentration inequality that easily follows 
from Gaussian isoperimetric inequality.

\begin{lemma}
\label{Gaussian_concentration}
Let $X_1,\dots, X_n$ be i.i.d. centered Gaussian random variables 
in ${\mathbb H}$ with covariance operator $\Sigma.$
Let $f:{\mathbb H}^n\mapsto {\mathbb R}$ be a function satisfying 
the following Lipschitz condition with some $L>0:$
$$
\Bigl|f(x_1,\dots, x_n)-f(x_1',\dots, x_n')\Bigr|\leq 
L\biggl(\sum_{j=1}^n \|x_j-x_j'\|^2\biggr)^{1/2},\ x_1,\dots, x_n,x_1',\dots,x_n'\in {\mathbb H}.
$$ 
Suppose that, for a real number $M,$
$$
{\mathbb P}\{f(X_1,\dots,X_n)\geq M\}\geq 1/4\ {\rm and}\ {\mathbb P}\{f(X_1,\dots,X_n)\leq M\}\geq 1/4.
$$
Then, there exists a numerical constant $D>0$ such that for all $t\geq 1,$ 
$$
{\mathbb P}\Bigl\{|f(X_1,\dots, X_n)-M|\geq DL\|\Sigma\|_{\infty}^{1/2}\sqrt{t}\Bigr\}\leq 
e^{-t}.
$$
\end{lemma}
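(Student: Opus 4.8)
The plan is to derive Lemma \ref{Gaussian_concentration} from the Gaussian isoperimetric inequality (equivalently, the Gaussian concentration inequality for Lipschitz functions), which is standard. First I would recall that the random vector $\mathbf{X}=(X_1,\dots,X_n)$ is centered Gaussian in the product Hilbert space $\mathbb{H}^n$ with covariance operator $\Sigma^{\oplus n}$, whose operator norm equals $\|\Sigma\|_\infty$. The standard form of the Gaussian concentration inequality (see, e.g., Ledoux's monograph on concentration of measure) states that if $g$ is a Gaussian vector in a separable Hilbert space $\mathcal{K}$ with covariance $S$, and $f$ is $L$-Lipschitz with respect to the Hilbert norm, then for every $s>0$
\[
\mathbb{P}\bigl\{|f(g)-\mathbb{M} f(g)|\geq s\bigr\}\leq 2\exp\Bigl(-\frac{s^2}{2L^2\|S\|_\infty}\Bigr),
\]
where $\mathbb{M} f(g)$ denotes a median of $f(g)$. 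Applying this with $\mathcal{K}=\mathbb{H}^n$, $S=\Sigma^{\oplus n}$, so that $\|S\|_\infty=\|\Sigma\|_\infty$, gives a deviation bound around any median $\mathbb{M}$ of $f(X_1,\dots,X_n)$.

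Next I would connect the hypothesis on $M$ to a median. The two-sided condition $\mathbb{P}\{f\geq M\}\geq 1/4$ and $\mathbb{P}\{f\leq M\}\geq 1/4$ does not say $M$ is a median, so I would instead argue directly: set $s_0:=L\|\Sigma\|_\infty^{1/2}\sqrt{t}$ for $t\geq 1$, and let $\mathbb{M}$ be any median of $f(\mathbf X)$. By the concentration inequality, $\mathbb{P}\{|f(\mathbf X)-\mathbb{M}|\geq c\,s_0\}\leq 2e^{-c^2 t/2}$ for any constant $c$. Choosing $c$ a suitable numerical constant (e.g.\ so that $2e^{-c^2t/2}<1/4$ for all $t\geq 1$), the event $\{|f(\mathbf X)-\mathbb{M}|<c s_0\}$ has probability $>3/4$. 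Combined with $\mathbb{P}\{f\geq M\}\geq 1/4$, the two events must intersect, so there is a point at which $f\geq M$ and $|f-\mathbb{M}|<c s_0$, giving $M<\mathbb{M}+c s_0$; symmetrically, using $\mathbb{P}\{f\leq M\}\geq 1/4$, we get $M>\mathbb{M}-c s_0$. Hence $|M-\mathbb{M}|< c s_0$, i.e.\ $M$ is within $O(1)\cdot s_0$ of a genuine median.

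Finally I would combine the triangle inequality with the concentration bound: for a numerical constant $D$ (to be fixed),
\[
\mathbb{P}\bigl\{|f(\mathbf X)-M|\geq D L\|\Sigma\|_\infty^{1/2}\sqrt{t}\bigr\}
\leq \mathbb{P}\bigl\{|f(\mathbf X)-\mathbb{M}|\geq (D-c) L\|\Sigma\|_\infty^{1/2}\sqrt{t}\bigr\}
\leq 2\exp\!\Bigl(-\tfrac{(D-c)^2 t}{2}\Bigr),
\]
and then pick $D$ large enough (depending only on $c$, hence numerical) that the right-hand side is at most $e^{-t}$ for all $t\geq 1$; e.g.\ $(D-c)^2/2\geq 1+\frac{\log 2}{t}$ for $t\geq 1$ is guaranteed once $(D-c)^2/2\geq 1+\log 2$. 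This yields the claimed bound with an absolute constant $D$.

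I do not expect any serious obstacle here: the only mild subtlety is that $M$ is given through a quartile-type condition rather than as a median, which is handled by the intersection-of-events argument above; everything else is a direct invocation of Gaussian concentration for Lipschitz functions together with the elementary fact $\|\Sigma^{\oplus n}\|_\infty=\|\Sigma\|_\infty$. One should also note that $f$ need not be everywhere finite a priori, but under the stated Lipschitz condition it is finite and measurable, so the inequality applies verbatim. For the application in the paper, $f$ will be (a truncated version of) $(x_1,\dots,x_n)\mapsto\langle S_r(\hat\Sigma-\Sigma)u,v\rangle\,\varphi(\|\hat\Sigma-\Sigma\|_\infty/\delta)$, whose Lipschitz constant $L$ will be estimated separately; Lemma \ref{Gaussian_concentration} then converts that Lipschitz bound into the desired sub-Gaussian tail.
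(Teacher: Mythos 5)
The paper itself gives no proof of Lemma \ref{Gaussian_concentration}; it simply remarks that the bound ``easily follows from Gaussian isoperimetric inequality.'' Your proposal supplies exactly the intended argument: you reduce to Gaussian concentration for Lipschitz functions in $\mathbb{H}^n$ (using that $\|\Sigma^{\oplus n}\|_\infty=\|\Sigma\|_\infty$, and that an $L$-Lipschitz map composed with $S^{1/2}$ is $L\|S\|_\infty^{1/2}$-Lipschitz in the Cameron--Martin direction), and you handle the only small wrinkle --- that $M$ is a quartile, not a median --- by the intersection-of-events argument showing $|M-\mathbb{M}|\lesssim L\|\Sigma\|_\infty^{1/2}$. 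One tiny cleanup: fix the bound $|M-\mathbb{M}|<cL\|\Sigma\|_\infty^{1/2}$ using $t=1$ once and for all, rather than letting it depend on $t$; with that, your triangle-inequality step and the choice of $D$ are correct. This is the same route the paper intends, just written out.
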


Lemma \ref{Gaussian_concentration} will be applied to the function 
$$
f(X_1,\dots, X_n):=
\Bigl\langle S_r(E)u,v\Bigr\rangle \varphi\Bigl(\frac{\|E\|_{\infty}}{\delta}\Bigr)
$$
With a little abuse of notation, assume for now that $X_1,\dots, X_n$
are nonrandom vectors in ${\mathbb H}.$
For $X_1',\dots, X_n'\in {\mathbb H},$ denote 
$$
E'=\hat \Sigma'-\Sigma, \ \ 
\hat \Sigma'=n^{-1}\sum_{j=1}^n X_j'\otimes X_j'.
$$
Let $\hat P_r'$ be the orthogonal projector on the direct sum of 
eigenspaces of $\hat \Sigma'$ corresponding to its eigenvalues 
$\{\sigma_j(\hat \Sigma'):j\in \Delta_r\}.$

We have to check the Lipschitz condition for the function $f.$
We will start with the following simple fact based on perturbation theory 
bounds of Section \ref{sec:pert}.

\begin{lemma}
Let $\gamma \in (0,1)$ and suppose that 
\begin{equation}
\label{delt_le}
\delta \leq \frac{1-\gamma}{1+\gamma}\frac{\bar g_r}{2}.
\end{equation}
Suppose also that 
\begin{equation}
\label{E_le}
\|E\|_{\infty}\leq (1+\gamma)\delta\ {\rm and}\ \|E'\|_{\infty}\leq (1+\gamma)\delta.
\end{equation}
Then, there exists a constant $C_{\gamma}>0$ such that 
\begin{equation}
\label{Srlip}
\|S_r(E)-S_r(E')\|_{\infty}\leq C_{\gamma}\frac{\delta}{\bar g_r^2}\|E-E'\|_{\infty}.
\end{equation}
\end{lemma}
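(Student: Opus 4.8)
The plan is to obtain a Lipschitz-type bound on $S_r(E)$ by comparing its contour-integral representation for $E$ and for $E'$. Recall from the proof of Lemma \ref{lem-pert-spectral} that, whenever $\|E\|_\infty \le \bar g_r/3$,
$$
S_r(E) = -\frac{1}{2\pi i}\oint_{\gamma_r}\sum_{k\ge 2}(-1)^k [R_\Sigma(\eta)E]^k R_\Sigma(\eta)\,d\eta,
$$
and the same holds for $S_r(E')$. Under hypotheses (\ref{delt_le}) and (\ref{E_le}), both $\|E\|_\infty$ and $\|E'\|_\infty$ are at most $(1+\gamma)\delta \le \frac{1-\gamma}{2}\bar g_r < \bar g_r/3$ for $\gamma$ not too close to $0$ (one can always shrink the constant $C_\gamma$ afterwards), so both series representations are valid and converge absolutely in operator norm, using $\|R_\Sigma(\eta)\|_\infty \le 2/\bar g_r$ on $\gamma_r$. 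The first step is therefore to write $S_r(E) - S_r(E')$ as a single contour integral of $\sum_{k\ge 2}(-1)^k\big([R_\Sigma E]^k - [R_\Sigma E']^k\big)R_\Sigma$, deferring the case $\|E\|_\infty$ or $\|E'\|_\infty$ large to a separate (easier) argument as the authors indicate they do elsewhere.

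The second step is the algebraic identity for the difference of $k$-th powers: for operators $A = R_\Sigma(\eta)E$ and $A' = R_\Sigma(\eta)E'$,
$$
A^k - (A')^k = \sum_{i=0}^{k-1} A^i (A - A') (A')^{k-1-i},
$$
so that $\|A^k - (A')^k\|_\infty \le k \,\rho^{k-1}\,\|A - A'\|_\infty$ where $\rho := \max(\|A\|_\infty, \|A'\|_\infty) \le \frac{2(1+\gamma)\delta}{\bar g_r}$, and $\|A - A'\|_\infty \le \frac{2}{\bar g_r}\|E - E'\|_\infty$. Plugging these into the series and including the extra factor $\|R_\Sigma(\eta)\|_\infty \le 2/\bar g_r$ from the trailing resolvent, one bounds the integrand (for $k\ge 2$) by
$$
\frac{2}{\bar g_r}\cdot k\,\rho^{k-1}\cdot \frac{2}{\bar g_r}\|E-E'\|_\infty,
$$
and summing $\sum_{k\ge 2} k\rho^{k-1}$ (a convergent geometric-type series, since $\rho \le \frac{2(1+\gamma)\delta}{\bar g_r} \le \frac{1-\gamma^2}{1} < 1$ after using (\ref{delt_le})) gives something of order $\rho \asymp_\gamma \delta/\bar g_r$. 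Multiplying by the length $2\pi\cdot \bar g_r/2$ of the contour $\gamma_r$ and the $\frac{1}{2\pi}$ prefactor, the $\bar g_r$ in the contour length cancels one power, leaving
$$
\|S_r(E)-S_r(E')\|_\infty \lesssim_\gamma \frac{\delta}{\bar g_r^2}\,\|E - E'\|_\infty,
$$
which is exactly (\ref{Srlip}). Note that since $S_r(E) = \tilde P_r - P_r - L_r(E)$ with $L_r$ linear in $E$, one could alternatively establish the bound directly for $\tilde P_r - \tilde P_r'$ and subtract the (exact, trivially Lipschitz) contribution of $L_r(E) - L_r(E') = L_r(E-E')$; I would mention this equivalent route but carry out the contour-integral computation as the cleaner one.

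The main obstacle is bookkeeping rather than conceptual: one must verify carefully that the smallness hypotheses $\rho < 1$ and $\|E\|_\infty, \|E'\|_\infty < \bar g_r/3$ genuinely follow from (\ref{delt_le})–(\ref{E_le}) for all $\gamma\in(0,1)$ — in particular near $\gamma = 1$ where $\frac{1-\gamma}{1+\gamma}\to 0$ the constraint is harmless, while near $\gamma = 0$ one must check $(1+\gamma)\cdot\frac{1-\gamma}{1+\gamma}\cdot\frac12 = \frac{1-\gamma}{2} < \frac13$ fails for small $\gamma$, so a small additional restriction or a reduction of the Lipschitz constant $C_\gamma$ is needed there. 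Tracking how $C_\gamma$ blows up as $\gamma\downarrow 0$ through the geometric series $\sum k\rho^{k-1} = (1-\rho)^{-2}$ is the one place where care is required; everything else is the standard resolvent expansion already used twice in the proof of Lemma \ref{lem-pert-spectral}.
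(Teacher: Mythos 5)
Your proposal is correct, and it takes a genuinely different route from the paper. The paper's proof writes $S_r(E')-S_r(E)=\hat P_r'-\hat P_r-L_r(E'-E)$, invokes Lemma~\ref{lem-pert-spectral-general} on the pair $(\hat\Sigma,\hat\Sigma')$ to get $\hat P_r'-\hat P_r=\hat L_r(E'-E)+\hat S_r(E'-E)$ (with $\hat L_r$ built from the resolvent of $\hat\Sigma$ rather than of $\Sigma$), bounds the quadratic remainder $\hat S_r(E'-E)$, and then estimates the linear-term mismatch $\hat L_r(E'-E)-L_r(E'-E)$ by controlling $\|R_{\hat\Sigma}(\eta)-R_\Sigma(\eta)\|_\infty$ via a Neumann expansion. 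Your direct telescoping of $[R_\Sigma(\eta)E]^k-[R_\Sigma(\eta)E']^k$ inside a single contour integral bypasses that machinery entirely and yields the Lipschitz bound in one step; it is the more elementary argument and produces an explicit constant $C_\gamma\asymp\gamma^{-2}$, whereas the paper's route leans on the auxiliary lemma to avoid repeating the geometric-series bookkeeping. Both give the stated conclusion.

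Two small corrections to the tail of your write-up. First, the worry about $\gamma$ near $0$, and the proposed remedy of ``shrinking $C_\gamma$,'' are both off the mark: shrinking the Lipschitz constant would not restore convergence of a divergent series, but in fact no restriction is needed because the threshold $\bar g_r/3$ appearing in the proof of Lemma~\ref{lem-pert-spectral} is used there only to extract explicit numeric constants, while the series representation of $S_r(E)$ is valid on the full region $\|E\|_\infty<\bar g_r/2$. Under (\ref{delt_le})--(\ref{E_le}) one has $\|E\|_\infty,\|E'\|_\infty\le(1+\gamma)\delta\le\frac{1-\gamma}{2}\bar g_r<\frac{\bar g_r}{2}$ and hence $\rho\le\frac{2(1+\gamma)\delta}{\bar g_r}\le 1-\gamma$, so $1-\rho\ge\gamma$ for \emph{every} $\gamma\in(0,1)$, and $\sum_{k\ge2}k\rho^{k-1}\le 2\rho/(1-\rho)^2\le 2\rho/\gamma^2$ closes the estimate with no side conditions. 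Second, the ``large $\|E\|_\infty$ or $\|E'\|_\infty$ case'' you propose to defer does not arise inside this lemma: hypothesis (\ref{E_le}) already confines both norms to the convergence region; the large-deviation regime is handled downstream by the truncation function $\varphi$ in Lemma~\ref{Lipschitz_constant}, not here.
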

 
\begin{proof}
Note that, by the definition of $S_r(E),$
\begin{equation}
\label{one-1}
S_r(E')-S_r(E)=\hat P_r'-\hat P_r-L_r(E'-E).
\end{equation}
For $\hat P_r'-\hat P_r,$ we will use decomposition of Lemma \ref{lem-pert-spectral-general} that yields:
\begin{equation}
\label{two-2}
\hat P_r'-\hat P_r = \hat L_r(E'-E) + \hat S_r(E'-E)
\end{equation}
with 
$$
\hat L_r(E'-E)= 
\frac{1}{2\pi i} \oint_{\gamma_r}R_{\hat \Sigma}(\eta)(E'-E)R_{\hat \Sigma}(\eta)d\eta
$$
and 
\begin{equation}
\label{three-3}
\|\hat S_r(E'-E)\|_{\infty}\leq 
15\biggl(1+\frac{4}{\pi}\frac{\|E\|_{\infty}}{\bar g_r-2\|E\|_{\infty}}\biggr)
\frac{\|E-E'\|_{\infty}^2}{(\bar g_r-2\|E\|_{\infty})^2}.
\end{equation}
More precisely, we used Lemma \ref{lem-pert-spectral-general} with $\hat \Sigma$ instead of $\Sigma$ and with $\hat \Sigma'$ instead of $\tilde \Sigma.$ 
Observe that the set of eigenvalues $\{\sigma_j(\hat \Sigma):j\in \Delta_r\}$ can be written as $\{\mu_i(\hat \Sigma):i\in I\}$ for some $I\subset {\mathbb N}.$ Also, we have   $\Delta_I=\Delta_r,$ $\hat P_I=\hat P_r$ and $\hat P_I'=\hat P_r'.$ Finally,
in our case $L_I\leq 2\|E\|_{\infty}$ and 
$$
\bar g_I\geq \bar g_r-2\|E\|_{\infty}.
$$
We could also replace the contour $\gamma_I$ used in Lemma \ref{lem-pert-spectral-general} by the circle $\gamma_r$ since 
these two contours separate the same part of the spectrum of $\hat \Sigma$ from 
the rest of the spectrum. 

Note now that 
\begin{align*}
&
\hat L_r(E'-E)-L_r(E'-E)=
\\
&
\frac{1}{2\pi i} \oint_{\gamma_r}(R_{\hat \Sigma}(\eta)-R_{\Sigma}(\eta))(E'-E)R_{\hat \Sigma}(\eta)d\eta
+
\frac{1}{2\pi i} \oint_{\gamma_r}R_{\Sigma}(\eta)(E'-E)(R_{\hat \Sigma}(\eta)-R_{\Sigma}(\eta))d\eta,
\end{align*}
which implies the bound 
\begin{align}
\label{chetyre-4}
&
\|\hat L_r(E'-E)-L_r(E'-E)\|_{\infty}\leq 
\\
&
\nonumber
\biggl[\frac{1}{2\pi} 
\oint_{\gamma_r}\|R_{\hat \Sigma}(\eta)-R_{\Sigma}(\eta)\|_{\infty}\|R_{\hat \Sigma}(\eta)\|_{\infty}d\eta
+
\frac{1}{2\pi} \oint_{\gamma_r}\|R_{\Sigma}(\eta)\|_{\infty}\|R_{\hat \Sigma}(\eta)-R_{\Sigma}(\eta)\|_{\infty}d\eta\biggr]\|E-E'\|_{\infty}.
\end{align}
Since $\|E\|_{\infty}<\bar g_r/2,$ we get that, for all $\eta\in \gamma_r,$
$$
\|R_{\Sigma}(\eta)\|_{\infty}\leq \frac{2}{\bar g_r},\ \ 
\|R_{\hat \Sigma}(\eta)\|_{\infty}\leq \frac{2}{\bar g_r-2\|E\|_{\infty}}.
$$
Using respresentation (\ref{resolve}) (with $\hat \Sigma$ instead of $\tilde \Sigma$), we easily get that  
$$
\|R_{\hat \Sigma}(\eta)-R_{\Sigma}(\eta)\|_{\infty}
\leq \sum_{k\geq 1}\|R_{\Sigma}(\eta)\|_{\infty}^{k+1}\|E\|_{\infty}^k
\leq \frac{2}{\bar g_r}\frac{(2/\bar g_r)\|E\|_{\infty}}{1-(2/\bar g_r)\|E\|_{\infty}}= \frac{4\|E\|_{\infty}}{\bar g_r(\bar g_r-2\|E\|_{\infty})}.
$$
Due to these bounds, it follows from (\ref{chetyre-4}) that 
\begin{equation}
\label{pjat-5}
\|\hat L_r(E'-E)-L_r(E'-E)\|_{\infty}\leq
\frac{8\|E\|_{\infty}}{(\bar g_r-2\|E\|_{\infty})^2}\|E-E'\|_{\infty}.
\end{equation}
We combine now (\ref{one-1}), (\ref{two-2}), (\ref{three-3}) and (\ref{pjat-5})
to get 
\begin{align*}
&
\|S_r(E)-S_r(E')\|_{\infty}
\\
&
\leq \frac{8\|E\|_{\infty}}{(\bar g_r-2\|E\|_{\infty})^2}\|E-E'\|_{\infty}
+ 
15\biggl(1+\frac{4}{\pi}\frac{\|E\|_{\infty}}{\bar g_r-2\|E\|_{\infty}}\biggr)
\frac{\|E-E'\|_{\infty}^2}{(\bar g_r-2\|E\|_{\infty})^2}
\\
&
\frac{8\|E\|_{\infty}}{(\bar g_r-2\|E\|_{\infty})^2}\|E-E'\|_{\infty}
+ 
30\biggl(1+\frac{4}{\pi}\frac{\|E\|_{\infty}}{\bar g_r-2\|E\|_{\infty}}\biggr)
\frac{\|E\|_{\infty}\vee \|E'\|_{\infty}}{(\bar g_r-2\|E\|_{\infty})^2}\|E-E'\|_{\infty}.
\end{align*}
To complete the proof, it is enough to use conditions (\ref{delt_le}), (\ref{E_le})
that, in particular, imply
$$
\bar g_r-2\|E\|_{\infty}\geq \bar g_r-2(1+\gamma) \delta \geq \gamma \bar g_r.
$$

\qed
\end{proof}

\begin{lemma}
\label{Lipschitz_constant}
Suppose that, for some $\gamma\in (0,1/2),$
\begin{equation}
\label{con_delta}
\delta \leq \frac{1-2\gamma}{1+2\gamma}\frac{\bar g_r}{2}.
\end{equation}
Then, there exists a constant $D_{\gamma}>0$ such that, for all $X_1,\dots, X_n,
X_1',\dots, X_n'\in {\mathbb H},$
\begin{equation}
\label{lip_lip_CC}
|f(X_1,\dots, X_n)-f(X_1',\dots, X_n')| 
\leq 
D_{\gamma} \frac{\delta}{\bar g_r^2}
\frac{\|\Sigma\|_{\infty}^{1/2}+\delta^{1/2}}{\sqrt{n}}
\biggl(\sum_{j=1}^n \|X_j-X_j'\|^2\biggr)^{1/2}.
\end{equation}
\end{lemma}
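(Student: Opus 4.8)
The plan is the following. Set $h:=\bigl(\sum_{j=1}^n\|X_j-X_j'\|^2\bigr)^{1/2}$, $E:=\hat\Sigma-\Sigma$, $E':=\hat\Sigma'-\Sigma$, and observe that $f$ factors through the operator-valued map $(x_1,\dots,x_n)\mapsto\hat\Sigma(x)-\Sigma$: writing, for a bounded symmetric operator $A$, $g(A):=\langle S_r(A)u,v\rangle\,\varphi(\|A\|_\infty/\delta)$, we have $f(X_1,\dots,X_n)=g(E)$ and $f(X_1',\dots,X_n')=g(E')$, and (as usual) we may assume $\|u\|\le1,\ \|v\|\le1$. The strategy is to establish separately that (i) $g$ is \emph{globally} bounded, $|g(A)|\le 14(1+\gamma)^2\delta^2/\bar g_r^2=:M_\gamma$; (ii) $g$ is \emph{globally} Lipschitz in the operator norm, $|g(A)-g(B)|\le C_\gamma\frac{\delta}{\bar g_r^2}\|A-B\|_\infty$ with $C_\gamma$ depending only on $\gamma$; and (iii) on the support of $\varphi$ the operator-valued map is controlled by $h$; then to assemble these. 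Claim (i) is immediate: if $\|A\|_\infty>(1+\gamma)\delta$ then $\varphi(\|A\|_\infty/\delta)=0$ and $g(A)=0$, while if $\|A\|_\infty\le(1+\gamma)\delta$ then $|g(A)|\le\|S_r(A)\|_\infty\le 14(\|A\|_\infty/\bar g_r)^2\le M_\gamma$ by (\ref{remainder_A}).

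For (ii) I would split into three cases. If $\|A\|_\infty,\|B\|_\infty>(1+\gamma)\delta$ then $g(A)=g(B)=0$. If $\|A\|_\infty,\|B\|_\infty\le(1+\gamma)\delta$, then condition (\ref{con_delta}) implies (\ref{delt_le}) with the same $\gamma$, since $x\mapsto\frac{1-x}{1+x}$ is decreasing and hence $\frac{1-2\gamma}{1+2\gamma}\le\frac{1-\gamma}{1+\gamma}$; thus the preceding lemma (see (\ref{Srlip}); call its constant $C_\gamma'$) gives $\|S_r(A)-S_r(B)\|_\infty\le C_\gamma'\frac{\delta}{\bar g_r^2}\|A-B\|_\infty$, and writing $g(A)-g(B)=\langle(S_r(A)-S_r(B))u,v\rangle\varphi(\|A\|_\infty/\delta)+\langle S_r(B)u,v\rangle\bigl(\varphi(\|A\|_\infty/\delta)-\varphi(\|B\|_\infty/\delta)\bigr)$ and bounding the second summand via the Lipschitz constant $1/\gamma$ of $\varphi$, the inequality $\bigl|\,\|A\|_\infty-\|B\|_\infty\,\bigr|\le\|A-B\|_\infty$, and (i) applied to $S_r(B)$, yields the claim with $C_\gamma:=C_\gamma'+14(1+\gamma)^2/\gamma$. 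If exactly one of $\|A\|_\infty,\|B\|_\infty$ exceeds $(1+\gamma)\delta$, say $\|A\|_\infty\le(1+\gamma)\delta<\|B\|_\infty$, then $g(B)=0=\varphi(\|B\|_\infty/\delta)$, so $|g(A)-g(B)|=|\langle S_r(A)u,v\rangle|\cdot\bigl|\varphi(\|A\|_\infty/\delta)-\varphi(\|B\|_\infty/\delta)\bigr|$ is bounded in exactly the same way.

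For (iii): if $g(E)=g(E')=0$ there is nothing to prove, so by symmetry assume $g(E)\neq0$, whence $\|E\|_\infty\le(1+\gamma)\delta$ and therefore $\|\hat\Sigma\|_\infty\le\|\Sigma\|_\infty+(1+\gamma)\delta$. Expanding $X_j\otimes X_j-X_j'\otimes X_j'=X_j\otimes(X_j-X_j')+(X_j-X_j')\otimes X_j-(X_j-X_j')\otimes(X_j-X_j')$ and using the elementary estimate $\bigl\|\sum_j a_j\otimes b_j\bigr\|_\infty\le\bigl(\sum_j\|b_j\|^2\bigr)^{1/2}\bigl\|\sum_j a_j\otimes a_j\bigr\|_\infty^{1/2}$ (Cauchy--Schwarz applied to the bilinear form, since $\sum_j\langle a_j,y\rangle^2=\langle(\sum_j a_j\otimes a_j)y,y\rangle$), together with $\bigl\|\sum_j w_j\otimes w_j\bigr\|_\infty\le\sum_j\|w_j\|^2$, one gets $\|E-E'\|_\infty=\|\hat\Sigma-\hat\Sigma'\|_\infty\le\frac{2h\,\|\hat\Sigma\|_\infty^{1/2}}{\sqrt n}+\frac{h^2}{n}\le\frac{2h(\|\Sigma\|_\infty+(1+\gamma)\delta)^{1/2}}{\sqrt n}+\frac{h^2}{n}$. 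Finally, combining (i)--(iii): $|f(X)-f(X')|=|g(E)-g(E')|\le\min\bigl(2M_\gamma,\,C_\gamma\frac{\delta}{\bar g_r^2}\|E-E'\|_\infty\bigr)$; splitting $\|E-E'\|_\infty$ into its part linear and its part quadratic in $h$, and using $\min(x,y+z)\le y+\min(x,z)\le y+\sqrt{xz}$, the linear part is already of the required form after $(\|\Sigma\|_\infty+(1+\gamma)\delta)^{1/2}\le\sqrt2(\|\Sigma\|_\infty^{1/2}+\delta^{1/2})$, while the quadratic part contributes at most $\sqrt{\,2M_\gamma\cdot C_\gamma\frac{\delta}{\bar g_r^2}\cdot\frac{h^2}{n}\,}\lesssim_\gamma\frac{\delta}{\bar g_r^2}\cdot\frac{\delta^{1/2}h}{\sqrt n}\le\frac{\delta}{\bar g_r^2}\cdot\frac{(\|\Sigma\|_\infty^{1/2}+\delta^{1/2})h}{\sqrt n}$, which gives (\ref{lip_lip_CC}) with $D_\gamma$ depending only on $\gamma$. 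The main obstacle is exactly the \emph{global} character of the bound: a crude estimate of $\|\hat\Sigma-\hat\Sigma'\|_\infty$ by a multiple of $h$ simply fails when the $X_j$ are large, so one must genuinely combine the boundedness of the truncated functional (step (i)) with the control $\|\hat\Sigma\|_\infty\lesssim\|\Sigma\|_\infty+\delta$ that is available precisely on the support of $\varphi$ (step (iii)); the device $\min(x,y+z)\le y+\sqrt{xz}$ — equivalently, a case split according to whether $h^2\lessgtr\delta n$ — is what glues the quadratic-in-$h$ term to the target estimate.
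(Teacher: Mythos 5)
Your proposal is correct and follows essentially the same route as the paper: truncate with $\varphi$, bound $|g|$ using (\ref{remainder_A}), establish a Lipschitz bound for $g$ in the operator norm combining the preceding lemma with the Lipschitz property of $\varphi$, control $\|E-E'\|_{\infty}$ via Cauchy--Schwarz, and assemble with a $\min/\sqrt{\phantom{x}}$ trick. Two of your technical steps are cleaner than the paper's. For the ``one above, one below'' case of step (ii), you use the identity $g(A)-g(B)=\langle S_r(A)u,v\rangle\bigl(\varphi(\|A\|_\infty/\delta)-\varphi(\|B\|_\infty/\delta)\bigr)$ directly, which avoids the paper's extra sub-split on $\|E-E'\|_\infty\gtrless\gamma\delta$ and the attendant need to invoke the preceding lemma with $2\gamma$ in place of $\gamma$ (and hence the constant $C_{2\gamma}$). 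For step (iii), the paper symmetrizes in $\hat\Sigma,\hat\Sigma'$ and then solves a self-referential inequality for $\|E-E'\|_\infty$ (its display~(\ref{E__E'})), whereas you expand $X_j\otimes X_j - X_j'\otimes X_j'$ into a linear-in-$d_j$ piece plus $-d_j\otimes d_j$, which produces the same linear-plus-quadratic-in-$h$ bound directly with only the one-sided control $\|\hat\Sigma\|_\infty\le\|\Sigma\|_\infty+(1+\gamma)\delta$. Both gains are local simplifications; the overall architecture and the key ideas (the truncation device and the $\min(x,y+z)\le y+\sqrt{xz}$ glue, equivalent to the paper's $a\wedge b\le\sqrt{ab}$) coincide.
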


\begin{proof}
Since $\varphi \biggl(\frac{\|E\|_{\infty}}{\delta}\biggr)=0$
if $\|E\|_{\infty}\geq (1+\gamma)\delta,$ bound (\ref{remainder_A}) of Lemma 
\ref{lem-pert-spectral} implies that 
\begin{equation}
\label{bdf}
|f(X_1,\dots, X_n)|= \biggl|\langle S_r(E)u,v\rangle \varphi \biggl(\frac{\|E\|_{\infty}}{\delta}\biggr)\biggr|
\leq 14(1+\gamma)^2 \frac{\delta^2}{\bar g_r^2}.
\end{equation}
Using now bounds (\ref{Srlip}), (\ref{bdf}) and the fact that $\varphi$ bounded by $1$ and Lipschitz with 
constant $\frac{1}{\gamma},$ which implies that the function $t\mapsto \varphi\biggl(\frac{t}{\delta}\biggr)$ is Lipschitz with constant $\frac{1}{\gamma \delta},$ 
we easily get that, under the assumptions 
\begin{equation}
\label{assumEE}
\|E\|_{\infty}\leq (1+\gamma)\delta,\ \|E'\|_{\infty}\leq (1+\gamma)\delta, 
\end{equation}
the following inequality holds:
\begin{align}
\label{bdassEE}
&
\biggl|\Bigl\langle S_r(E)u,v\Bigr\rangle \varphi \biggl(\frac{\|E\|_{\infty}}{\delta}\biggr)-
\Bigl\langle S_r(E')u,v\Bigr\rangle \varphi \biggl(\frac{\|E'\|_{\infty}}{\delta}\biggr)\biggr|
\\
&
\nonumber
\leq \|S_r(E)-S_r(E')\|_{\infty}+\frac{14(1+\gamma)^2}{\gamma}
\frac{\delta}{\bar g_r^2}\|E-E'\|_{\infty}
\\
&
\nonumber
\leq \biggl(C_{\gamma}+\frac{14(1+\gamma)^2}{\gamma}\biggr)\frac{\delta}{\bar g_r^2}\|E-E'\|_{\infty}.
\end{align}
It remains to prove a similar bound in the case when 
$$
\|E\|_{\infty}\leq (1+\gamma)\delta,\ \|E'\|_{\infty}> (1+\gamma)\delta
$$ 
(when both norms are larger than $(1+\gamma)\delta,$ the function $\varphi$ is equal to zero and the bound is trivial). First consider the case when $\|E-E'\|_{\infty}\geq \gamma \delta.$ Then, in view of (\ref{bdf}), we have 
\begin{align*}
&
\biggl|\langle S_r(E)u,v\rangle \varphi \biggl(\frac{\|E\|_{\infty}}{\delta}\biggr)-
\langle S_r(E')u,v\rangle \varphi \biggl(\frac{\|E\|_{\infty}}{\delta}\biggr)\biggr|
\\
&
=\biggl|\langle S_r(E)u,v\rangle \varphi \biggl(\frac{\|E\|_{\infty}}{\delta}\biggr)\biggr|\leq 
14(1+\gamma)^2 \frac{\delta^2}{\bar g_r^2} \leq 
\frac{14(1+\gamma)^2}{\gamma} \frac{\delta}{\bar g_r^2}\|E-E'\|_{\infty}.
\end{align*}
Finally, if $\|E-E'\|_{\infty}<\gamma \delta,$ we have that $\|E'\|_{\infty}\leq  (1+2\gamma)\delta$ and, taking into account assumption (\ref{con_delta}), we can repeat the argument in the case (\ref{assumEE}) ending up with the same bound as (\ref{bdassEE}) with constant $C_{2\gamma}+\frac{14(1+2\gamma)^2}{\gamma}$ 
instead of $C_{\gamma}+\frac{14(1+\gamma)^2}{\gamma}$ in the right hand side.
Thus, with some constant $L_{\gamma}>0,$
\begin{equation}
\label{bdEE'}
\biggl|\langle S_r(E)u,v\rangle \varphi \biggl(\frac{\|E\|_{\infty}}{\delta}\biggr)-
\langle S_r(E')u,v\rangle \varphi \biggl(\frac{\|E\|_{\infty}}{\delta}\biggr)\biggr|
\leq 
L_{\gamma}\frac{\delta}{\bar g_r^2}\|E-E'\|_{\infty}.
\end{equation}

We will now control $\|E-E'\|_{\infty}.$ Note that 
$$
\|E-E'\|_{\infty}= 
\sup_{\|u\|\leq 1,\|v\|\leq 1}
\Bigl|\Bigl\langle (E-E')u,v\Bigr\rangle\Bigr|
$$
$$
=
\sup_{\|u\|\leq 1,\|v\|\leq 1}
\biggl|n^{-1}\sum_{j=1}^n 
\langle X_j,u\rangle \langle X_j,v\rangle
-\langle X_j',u\rangle \langle X_j',v\rangle
\biggr|
$$
$$
\leq 
\sup_{\|u\|\leq 1,\|v\|\leq 1}
\biggl|n^{-1}\sum_{j=1}^n 
\langle X_j,u\rangle \langle X_j-X_j',v\rangle
\biggr|+
\sup_{\|u\|\leq 1,\|v\|\leq 1}
\biggl|n^{-1}\sum_{j=1}^n 
\langle X_j-X_j',u\rangle \langle X_j',v\rangle
\biggr|
$$
$$
\leq 
\sup_{\|u\|\leq 1}
\biggl(n^{-1}\sum_{j=1}^n 
\langle X_j,u\rangle^2
\biggr)^{1/2} 
\sup_{\|v\|\leq 1}
\biggl(n^{-1}\sum_{j=1}^n\langle X_j-X_j',v\rangle^2\biggr)^{1/2}
$$
$$
+
\sup_{\|u\|\leq 1}
\biggl(n^{-1}\sum_{j=1}^n 
\langle X_j-X_j',u\rangle^2
\biggr)^{1/2} 
\sup_{\|v\|\leq 1}
\biggl(n^{-1}\sum_{j=1}^n\langle X_j',v\rangle^2\biggr)^{1/2}
$$
$$
\leq \frac{\|\hat \Sigma\|_{\infty}^{1/2}+
\|\hat \Sigma'\|_{\infty}^{1/2}}{\sqrt{n}}
\biggl(\sum_{j=1}^n \|X_j-X_j'\|^2\biggr)^{1/2}.
$$
Clearly, it is enough to consider the case when at least one of the 
norms $\|E\|_{\infty}, \|E'\|_{\infty}$ is not larger than $2\delta.$
To be specific, assume that $\|E\|_{\infty}\leq 2\delta.$ Then 
$$
\|\hat \Sigma\|_{\infty}^{1/2}+\|\hat \Sigma'\|_{\infty}^{1/2}
\leq 
2\|\hat \Sigma\|_{\infty}^{1/2}+\|E-E'\|_{\infty}^{1/2}
\leq 
2\|\Sigma\|_{\infty}^{1/2}+2\sqrt{2\delta}+\|E-E'\|_{\infty}^{1/2}.
$$
Therefore,
$$
\|E-E'\|_{\infty}\leq 
\frac{2\|\Sigma\|_{\infty}^{1/2}+2\sqrt{2\delta}}{\sqrt{n}}
\biggl(\sum_{j=1}^n \|X_j-X_j'\|^2\biggr)^{1/2}
+
\frac{\|E-E'\|_{\infty}^{1/2}}{\sqrt{n}}
\biggl(\sum_{j=1}^n \|X_j-X_j'\|^2\biggr)^{1/2},
$$
which easily implies 
\begin{equation}
\label{E__E'}
\|E-E'\|_{\infty}\leq 
\frac{4\|\Sigma\|_{\infty}^{1/2}+4\sqrt{2\delta}}{\sqrt{n}}
\biggl(\sum_{j=1}^n \|X_j-X_j'\|^2\biggr)^{1/2}
\bigvee
\frac{4}{n}
\sum_{j=1}^n \|X_j-X_j'\|^2.
\end{equation}
Now substitute the last bound in the right hand side of (\ref{bdEE'}) and also 
observe that, in view of (\ref{bdf}), the left hand side of (\ref{bdEE'}) can be 
also upper bounded by 
$
28(1+\gamma)^2 \frac{\delta^2}{\bar g_r^2}.
$
Therefore, we get that with some constant $L_{\gamma}'>0,$
\begin{align}
\label{bdEE''}
&
\biggl|\langle S_r(E)u,v\rangle \varphi \biggl(\frac{\|E\|_{\infty}}{\delta}\biggr)-
\langle S_r(E')u,v\rangle \varphi \biggl(\frac{\|E\|_{\infty}}{\delta}\biggr)\biggr|
\\
&
\nonumber
\leq 4L_{\gamma}\frac{\delta}{\bar g_r^2}
\biggl[
\frac{\|\Sigma\|_{\infty}^{1/2}+\sqrt{2\delta}}{\sqrt{n}}
\biggl(\sum_{j=1}^n \|X_j-X_j'\|^2\biggr)^{1/2}
\bigvee
\frac{1}{n}
\sum_{j=1}^n \|X_j-X_j'\|^2
\biggr]
\bigwedge 
28(1+\gamma)^2 \frac{\delta^2}{\bar g_r^2}
\\
&
\nonumber
\leq L_{\gamma}'\frac{\delta}{\bar g_r^2}\biggl[
\frac{\|\Sigma\|_{\infty}^{1/2}+\sqrt{2\delta}}{\sqrt{n}}
\biggl(\sum_{j=1}^n \|X_j-X_j'\|^2\biggr)^{1/2}
\bigvee
\biggl(\frac{1}{n}
\sum_{j=1}^n \|X_j-X_j'\|^2\bigwedge \delta 
\biggr)\biggr].
\end{align}
Using an elementary inequality $a\wedge b\leq \sqrt{ab}, a,b\geq 0,$
we get 
$$
\frac{1}{n}
\sum_{j=1}^n \|X_j-X_j'\|^2\bigwedge \delta \leq 
\sqrt{\frac{\delta}{n}}\biggl(\sum_{j=1}^n \|X_j-X_j'\|^2\biggr)^{1/2}.
$$
This allows us to drop the last term in the maximum in the right hand side of 
(\ref{bdEE''}) (since a similar expression is a part of the first term). 
This yields bound (\ref{lip_lip_CC}).

\qed
\end{proof}

We set $\delta :=\delta_n(t)$. Without loss of generality, we can assume that $t\geq \log 4$ and $e^{-t}\leq 1/4$
(the result can be extended to all $t\geq 1$ by adjusting the constants). Theorem \ref{spectrum_sharper} gives $\mathbb P \left(  \|E\|_{\infty} \geq \delta \right) \leq \frac{1}{4}$. In addition, in view of (\ref{condition gamma_r-1}) and (\ref{condition gamma_r-2}), $\delta_n(t) \leq \left(1-\frac{\gamma}{2}\right)\frac{\bar g_r}{2} = \frac{1-2\gamma'}{1+2\gamma'}\frac{\bar g_r}{2} $ for some $\gamma' \in (0,1/2)$. Thus the function $f(X_1,\ldots,X_n)$ satisfies the Lipschitz condition (\ref{lip_lip_CC}) with some constant $D_{\gamma}' = D_{\gamma'}$.

To complete the proof of Theorem \ref{technical_1}, denote ${\rm Med}(\eta)$ a median of a random variable $\eta,$ and let 
$M:={\rm Med}\Bigl(\Bigl\langle S_r(E)u,v\Bigr\rangle\Bigr).$ 
Since $f(X_1,\dots, X_n)=\langle S_r(E)u,v\rangle$ on the event $\{\|E\|_{\infty}<\delta\},$ we have 
$$
{\mathbb P}\{f(X_1,\dots, X_n)\geq M\}\geq 
{\mathbb P}\{f(X_1,\dots, X_n)\geq M, \|E\|_{\infty}<\delta\}= 
$$
$$
{\mathbb P}\{\langle S_r(E)u,v\rangle\geq M, \|E\|_{\infty}<\delta\}\geq 
{\mathbb P}\{\langle S_r(E)u,v\rangle\geq M\}-{\mathbb P}\{\|E\|_{\infty}\geq \delta\}
\geq 1/4
$$
and, similarly,
$$
{\mathbb P}\{f(X_1,\dots, X_n)\leq M\}\geq 1/4.
$$
It follows from Lemma \ref{Gaussian_concentration} and Lemma \ref{Lipschitz_constant} that with some constant $D_{\gamma}>0,$ for all $t\geq 1$ with probability at least $1-e^{-t},$
$$
|f(X_1,\dots, X_n)-M|\leq D_{\gamma} \frac{\delta}{\bar g_r^2}
(\|\Sigma\|_{\infty}^{1/2}+\delta^{1/2})
\|\Sigma\|_{\infty}^{1/2} \sqrt{\frac{t}{n}}.
$$
In this case, we get for all $t\geq 1$, with probability at least $1-2e^{-t}$
\begin{align*}
\Bigl|\Bigl\langle S_r(E)u,v\Bigr\rangle-M\Bigr|&\leq 
C_{\gamma}\frac{\|\Sigma\|_{\infty}^2}{\bar g_r^2} \left(\sqrt{\frac{\mathbf{r}(\Sigma)}{n}}\vee \sqrt{\frac{t}{n}} \right)\sqrt{\frac{t}{n}}\\
&\leq 
C_{\gamma}\frac{\|\Sigma\|_{\infty}^2}{\bar g_r^2} \left(\sqrt{\frac{\mathbf{r}(\Sigma)}{n}}\vee \sqrt{\frac{t}{n}}\vee\frac{t}{n} \right)\sqrt{\frac{t}{n}}.
\end{align*}
for some constant $C_\gamma >0$. By adjusting the constant $C_\gamma$, we can replace $1-2e^{-t}$ by $1-e^{-t}$.

We will now prove a similar bound in the case where (\ref{condition gamma_r-2}) does not hold. Then,
\begin{align}\label{condition_gamma_r-3}
\frac{\|\Sigma\|_{\infty}}{\bar g_r} \sqrt{\frac{t}{n}} \geq \frac{\gamma }{4C}.
\end{align}
If follows from bound (\ref{remainder_A}) and condition $\|E\|_{\infty} \leq \bar g_r /2$ that
$$
\Bigl|\Bigl\langle S_r(E)u,v\Bigr\rangle\Bigr|\leq \|S_r(E)\|_{\infty} \leq c \frac{\|E\|_{\infty}}{\bar g_r}.
$$
We can now use the bounds of Theorems \ref{th_operator} and \ref{spectrum_sharper} combined with (\ref{condition_gamma_r-3}) to get for all $t\geq 1$, with probability at least $1-e^{-t}$ that
$$
\Bigl|\Bigl\langle S_r(E)u,v\Bigr\rangle\Bigr| \leq c' \frac{\|\Sigma\|_{\infty}}{\bar g_r} \left( \sqrt{\frac{\mathbf{r}(\Sigma)}{n}} \vee \sqrt{\frac{t}{n}} \vee \frac{t}{n}\right) \leq C_{\gamma}' \frac{\|\Sigma\|_{\infty}^2}{\bar g_r^2} \left( \sqrt{\frac{\mathbf{r}(\Sigma)}{n}} \vee \sqrt{\frac{t}{n}} \vee \frac{t}{n}\right)\sqrt{\frac{t}{n}},
$$
for some $C_{\gamma}'>0$.

It is easy to deduce from the previous display that
$$
M \leq C_{\gamma}' \frac{\|\Sigma\|_{\infty}^2}{\bar g_r^2} \left( \sqrt{\frac{\mathbf{r}(\Sigma)}{n}} \vee \sqrt{\frac{1}{n}} \vee \frac{1}{n}\right)\sqrt{\frac{1}{n}}.
$$
Combining the last two displays, we get for all $t\geq 1$, with probability at least $1-e^{-t}$
$$
\Bigl|\Bigl\langle S_r(E)u,v\Bigr\rangle - M \Bigr| \leq C_{\gamma}' \frac{\|\Sigma\|_{\infty}^2}{\bar g_r^2} \left( \sqrt{\frac{\mathbf{r}(\Sigma)}{n}} \vee \sqrt{\frac{t}{n}} \vee \frac{t}{n}\right)\sqrt{\frac{t}{n}},
$$
By integrating the tails of this exponential bound it is easy to see that,
with some $D_{\gamma}>0,$ 
$$
\Bigl|{\mathbb E}\Bigl\langle S_r(E)u,v\Bigr\rangle-M\Bigr|\leq 
{\mathbb E}\Bigl|\Bigl\langle S_r(E)u,v\Bigr\rangle-M\Bigr|
\leq 
D_{\gamma}\frac{\|\Sigma\|_{\infty}^2}{\bar g_r^2} \left( \sqrt{\frac{\mathbf{r}(\Sigma)}{n}} \vee \sqrt{\frac{1}{n}} \vee \frac{1}{n}\right)\sqrt{\frac{1}{n}},
$$
which, in turn, implies that one can replace $M$ by the expectation 
${\mathbb E}\Bigl\langle S_r(E)u,v\Bigr\rangle$ in the concentration bound and 
get that with some $D_{\gamma}>0$ and with probability at least $1-2e^{-t}$
\begin{align*}
&
\Bigl|\Bigl\langle S_r(E)u,v\Bigr\rangle-{\mathbb E}\Bigl\langle S_r(E)u,v\Bigr\rangle\Bigr|\leq 
D_{\gamma} \frac{\|\Sigma\|_{\infty}^2}{\bar g_r^2} \left( \sqrt{\frac{\mathbf{r}(\Sigma)}{n}} \vee \sqrt{\frac{t}{n}} \vee \frac{t}{n}\right)\sqrt{\frac{t}{n}}.
\end{align*}
This completes the proof of the theorem.
%

\qed
\end{proof}

\section{A Representation of the Bias ${\mathbb E}\hat P_r-P_r$}\label{Sec:Bias}

In this section, we study the bias ${\mathbb E}\hat P_r-P_r$ of the empirical
spectral projector $\hat P_r.$ Under mild assumptions, we show that   
$$
{\mathbb E}\hat P_r-P_r=P_rW_rP_r +T_r,
$$
where the main term $P_rW_rP_r$ is a symmetric operator of rank $m_r$ such that 
\begin{equation}
\label{W_r}
\|P_rW_rP_r\|_{\infty} \leq \|W_r\|_{\infty}\lesssim \frac{\|\Sigma\|_{\infty}^2}{\bar g_r^2}\frac{{\bf r}(\Sigma)}{n}
\end{equation}
and the remainder term $T_r$ satisfies the condition 
$\|T_r\|_{\infty}=O(n^{-1/2}).$ Moreover, in the case when 
${\bf r}(\Sigma)=o(n),$ we have $\|T_r\|_{\infty}=o(n^{-1/2}).$

\begin{theorem}
\label{bd_bias}
Suppose that, for some $\gamma\in (0,1),$ (\ref{condition gamma_r-1}) is satisfied. Denote $W_r:={\mathbb E}S_r(\hat \Sigma-\Sigma).$
Then, there exists a constant $D_{\gamma}>0$ such that
\begin{equation}
\label{T_r}
\Bigl\|{\mathbb E}\hat P_r-P_r-P_rW_rP_r\Bigr\|_{\infty}\leq D_{\gamma}\frac{m_r\|\Sigma\|_{\infty}^2}{\bar g_r^2}
\sqrt{\frac{{\bf r}(\Sigma)}{n}}\frac{1}{\sqrt{n}}.
\end{equation}
\end{theorem}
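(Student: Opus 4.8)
The plan is to start from the perturbation expansion of Lemma~\ref{lem-pert-spectral}, which gives $\hat P_r - P_r = L_r(E) + S_r(E)$ with $E = \hat\Sigma - \Sigma$, valid on the event $\mathcal{A} := \{\|E\|_{\infty} < \bar g_r/2\}$. Taking expectations and using ${\mathbb E}L_r(E) = 0$, on $\mathcal{A}$ we would get ${\mathbb E}\hat P_r - P_r = {\mathbb E}S_r(E) = W_r$, so the issue is twofold: first, the decomposition is only valid on $\mathcal{A}$, so I must control the contribution of the complementary event $\mathcal{A}^c$; second, and more importantly, I must show that $W_r$ is (up to the claimed remainder) of the special sandwiched form $P_r W_r P_r$. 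For the first point, on $\mathcal{A}^c$ one has the crude bound $\|\hat P_r - P_r\|_{\infty} \le 1$ and ${\mathbb P}(\mathcal{A}^c) \le e^{-ct n}$-type decay coming from Theorem~\ref{spectrum_sharper}; combined with condition~(\ref{condition gamma_r-1}) this contributes something exponentially small in $n$, hence negligible relative to the $\sqrt{{\bf r}(\Sigma)/n}\cdot n^{-1/2}$ target. (Strictly, one should also account for the event where the clusters fail to separate, but that is subsumed in the same estimate.)

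The heart of the argument is extracting the $P_r W_r P_r$ part. Here I would go back to the Riesz-integral representation of $S_r(E)$,
$$
S_r(E) = -\frac{1}{2\pi i}\oint_{\gamma_r}\sum_{k\ge 2}(-1)^k [R_\Sigma(\eta)E]^k R_\Sigma(\eta)\,d\eta,
$$
and take the expectation term by term using the spectral representation $R_\Sigma(\eta) = \sum_j (\mu_j - \eta)^{-1}P_j$. The quadratic ($k=2$) term is the dominant one and equals, after expanding,
$$
W_r^{(2)} := -\frac{1}{2\pi i}\oint_{\gamma_r}\sum_{j_1,j_2,j_3}\frac{{\mathbb E}(P_{j_1}EP_{j_2}EP_{j_3})}{(\mu_{j_1}-\eta)(\mu_{j_2}-\eta)(\mu_{j_3}-\eta)}\,d\eta.
$$
Using ${\mathbb E}(P_a E P_b E P_c) = n^{-1}[\,{\mathbb E}\langle X, P_b X\rangle\, P_a \Sigma P_c + \text{(Gaussian Wick terms)}\,]$ and the residue calculus already carried out in the proof of Lemma~\ref{lem-pert-spectral}, only the contour integrals with a pole at $\mu_r$ survive. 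The key structural observation is that in the surviving terms, either the leftmost projector $P_{j_1}$ or the rightmost $P_{j_3}$ equals $P_r$ (the contour $\gamma_r$ picks out exactly one factor $(\mu_r - \eta)^{-1}$, forcing one of the indices to be $r$), \emph{and} by symmetry of $S_r(E)$ under transposition the other flanking index must also be $r$ in the leading contribution; the cross terms where only one index is $r$ cancel or are absorbed into the remainder. This is what produces the form $P_r W_r P_r$. I would then bound $\|W_r\|_\infty$ via $\|W_r\|_\infty \le {\mathbb E}\|S_r(E)\|_\infty \mathbf{1}_{\mathcal{A}} + (\text{neg.})$, and from bound~(\ref{remainder_A}) together with Theorem~\ref{th_operator}, $\|W_r\|_\infty \lesssim {\mathbb E}(\|E\|_\infty^2)/\bar g_r^2 \lesssim \|\Sigma\|_\infty^2 \bar g_r^{-2}\, {\bf r}(\Sigma)/n$, which is~(\ref{W_r}). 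The factor $m_r$ in~(\ref{T_r}) enters because passing from an operator-norm bound on the Hilbert--Schmidt-type remainder pieces to the operator norm of $T_r$ costs a factor of $\mathrm{rank} = m_r$ on the finite-rank block $P_r(\cdot)P_r$.

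For the remainder $T_r = {\mathbb E}\hat P_r - P_r - P_r W_r P_r$, I would collect: (i) the $\mathcal{A}^c$ contribution (exponentially small); (ii) the cubic-and-higher ($k\ge 3$) terms in the Neumann series for $S_r(E)$, which are bounded by ${\mathbb E}(\|E\|_\infty^3)/\bar g_r^3 \lesssim (\|\Sigma\|_\infty/\bar g_r)^3 ({\bf r}(\Sigma)/n)^{3/2}$, itself $\lesssim (\text{RHS of }(\ref{T_r}))$ under~(\ref{condition gamma_r-1}); and (iii) the ``off-diagonal'' part of the $k=2$ term where not both flanking indices are $r$ — these are the genuinely subtle pieces, and I expect \emph{controlling (iii) is the main obstacle}. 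One handles them by noting each such term carries a factor $P_s E P_r$ or $P_r E P_s$ with $s \ne r$ sandwiched against a $C_r$-type operator; taking expectation and using the moment bound ${\mathbb E}\|E\|_\infty^2 \lesssim \|\Sigma\|_\infty^2 {\bf r}(\Sigma)/n$ together with a cross-moment estimate ${\mathbb E}\|P_r E C_r\|_2 \cdot \|\cdot\| \lesssim \|\Sigma\|_\infty^2 \bar g_r^{-1}\sqrt{{\bf r}(\Sigma)}/n$ (the extra $n^{-1/2}$ coming from the centered structure of $E$) yields exactly the $\sqrt{{\bf r}(\Sigma)/n}\cdot n^{-1/2}$ rate. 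Assembling (i)--(iii) and invoking~(\ref{W_r}) to identify $W_r$'s leading block as $P_r W_r P_r$ completes the proof.
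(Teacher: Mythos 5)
Your overall skeleton matches the paper's: write $\hat P_r - P_r = L_r(E) + S_r(E)$, note ${\mathbb E}L_r(E) = 0$, so ${\mathbb E}\hat P_r - P_r = {\mathbb E}S_r(E) = W_r$, and then reduce the problem to showing that the off-diagonal blocks $P_r^{\perp}{\mathbb E}S_r(E)P_r$, $P_r{\mathbb E}S_r(E)P_r^{\perp}$, $P_r^{\perp}{\mathbb E}S_r(E)P_r^{\perp}$ are $O\bigl(\|\Sigma\|_{\infty}^2\bar g_r^{-2}\sqrt{{\bf r}(\Sigma)/n}\cdot n^{-1/2}\bigr)$. (A minor point: there is no $\mathcal{A}^c$ remainder in the identity ${\mathbb E}\hat P_r - P_r = W_r$ itself, since $S_r(E) := \hat P_r - P_r - L_r(E)$ is defined on all of $\Omega$; the event restriction only enters when you try to bound ${\mathbb E}S_r(E)$ via the Neumann series, and the $\|E\|_{\infty} > \delta_n$ contribution is then shown to be of the right order by integrating concentration tails, not because it is exponentially small -- with $\delta_n = {\mathbb E}\|E\|_{\infty}$ it manifestly is not.)

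There are, however, two genuine gaps. First, your bound on the $k\ge 3$ terms via ${\mathbb E}\|E\|_{\infty}^3/\bar g_r^3 \lesssim (\|\Sigma\|_{\infty}/\bar g_r)^3({\bf r}(\Sigma)/n)^{3/2}$ does not achieve the target: the ratio of this to $\|\Sigma\|_{\infty}^2\bar g_r^{-2}\sqrt{{\bf r}(\Sigma)}/n$ is $(\|\Sigma\|_{\infty}/\bar g_r)\,{\bf r}(\Sigma)/\sqrt{n}$, which under (\ref{condition gamma_r-1}) is only $\lesssim\sqrt{n}$ rather than $\lesssim 1$. You need the $n^{-1/2}$ improvement at \emph{every} order $k\ge 2$, not just $k=2$. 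Second, the improvement does not come from cross terms ``cancelling or being absorbed'' -- nothing cancels. The actual mechanism is Gaussian independence: $\langle X,\theta_l\rangle$ ($l\in\Delta_r$) and $P_s X$ ($s\ne r$) are uncorrelated Gaussians, hence independent. In each off-diagonal term of the Neumann series one can write a sub-product as $(\tilde R_{\Sigma}(\eta)E)^{s-2}\tilde R_{\Sigma}(\eta)E\theta_l = n^{-1}\sum_j\langle X_j,\theta_l\rangle(\tilde R_{\Sigma}(\eta)E)^{s-2}\tilde R_{\Sigma}(\eta)X_j$ (using $\tilde R_{\Sigma}(\eta)\Sigma\theta_l = 0$), and conditioning on $\{P_sX_j: s\ne r\}$ makes this a centered Gaussian with conditional variance $\lesssim (\mu_r/n)\cdot(\text{norm of the other factor})^2$. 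That $\mu_r/n$ supplies the extra $n^{-1/2}$ uniformly over all orders, after which summing the geometric series in $\delta_n/\bar g_r$ gives exactly the claimed rate. The $m_r$ factor arises from summing over an orthonormal basis $\{\theta_l: l\in\Delta_r\}$ of the eigenspace inside this expansion, not (as you suggest) from a rank-to-operator-norm conversion on the $P_r(\cdot)P_r$ block -- that block is precisely the one subtracted off, so its rank is not the relevant quantity.
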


\begin{remark}
Note that the operator $P_rW_rP_r$ does satisfy condition (\ref{W_r}). 
This follows from bound (\ref{remainder_A}) and Theorem \ref{th_operator}.
\end{remark}

\begin{proof}
We set
$
\delta_n := \E\|E\|_{\infty}$. We recall that, in view of Condition (\ref{condition gamma_r-1}) and Theorem \ref{th_operator}, we have $\mathbf{r}(\Sigma)\lesssim n$ and $\delta_n < \frac{\bar g_r}{2}$. We start with the following representation
\begin{align}
\nonumber
&
{\mathbb E}\hat P_r-P_r= {\mathbb E}(L_r(E)+S_r(E))=
{\mathbb E}S_r(E)
={\mathbb E}P_r S_r(E) P_r 
\\
&
\label{SrPrE}
+ 
{\mathbb E}\Bigl(P_r^{\perp}S_r(E)P_r+P_r S_r(E)P_r^{\perp}+
P_r^{\perp}S_r(E)P_r^{\perp}\Bigr)I(\|E\|_{\infty}\leq \delta_n)
\\
&
\nonumber
+{\mathbb E}\Bigl(P_r^{\perp}S_r(E)P_r+P_r S_r(E)P_r^{\perp}+
P_r^{\perp}S_r(E)P_r^{\perp}\Bigr)I(\|E\|_{\infty}>\delta_n).
\end{align}
and provide bounds for its relevant terms. 

Recall formula (\ref{resolve}) and note that, under the assumption $\|E\|_{\infty}
<\frac{\bar g_r}{2},$  the series in the right hand side 
converges in the operator norm absolutely and uniformly in $\eta\in \gamma_r.$ 
Under this assumption, 
\begin{equation}
\label{srE2}
S_r(E)=
- \sum_{k\geq 2}
\frac{1}{2\pi i} \oint_{\gamma_r}(-1)^{k}[R_{\Sigma}(\eta)E]^kR_{\Sigma}(\eta)d\eta. 
\end{equation}
Denote 
$$
\tilde R_{\Sigma}(\eta):= \sum_{s\not\in \Delta_r} \frac{1}{\mu_s-\eta}P_s.
$$
Then 
$$
R_{\Sigma}(\eta)= \frac{1}{\mu_r-\eta}P_r + \tilde R_{\Sigma}(\eta). 
$$
It is easy to check that 
$$
P_r^{\perp}[R_{\Sigma}(\eta)E]^kR_{\Sigma}(\eta)P_r = 
P_r^{\perp}\frac{1}{\mu_r-\eta}[R_{\Sigma}(\eta)E]^kP_r
$$
$$
=
\frac{1}{(\mu_r-\eta)^2}
\sum_{s=2}^{k} 
(\tilde R_{\Sigma}(\eta)E)^{s-1}P_rE (R_{\Sigma}(\eta)E)^{k-s}P_r 
+ \frac{1}{\mu_r-\eta}(\tilde R_{\Sigma}(\eta)E)^k P_r
$$
To understand the last equality, note that, in each bracket of the expression 
$$
[R_{\Sigma}(\eta)E]^k = [R_{\Sigma}(\eta)E]\dots [R_{\Sigma}(\eta)E],
$$ 
$R_{\Sigma}(\eta)$ can be replaced 
by the sum of two terms, $\frac{1}{\mu_r-\eta}P_r$ and $\tilde R_{\Sigma}(\eta).$
Index $s$ in the sum is the number of the first bracket where $\frac{1}{\mu_r-\eta}P_r$ is chosen. If $s=1,$ the corresponding term
is equal to $0$ since $P_r^{\perp}P_r=0.$
The last term corresponds to the case when 
$\tilde R_{\Sigma}(\eta)$ is chosen from each of the brackets. 

We can now write 
\begin{align}
\nonumber
&
P_r^{\perp}S_r(E)P_r =
\\
& 
\label{Prperp}
- \sum_{k\geq 2}(-1)^k\frac{1}{2\pi i}\oint_{\gamma_r} \biggl[\frac{1}{(\mu_r-\eta)^2}\sum_{s=2}^{k} (\tilde R_{\Sigma}(\eta)E)^{s-1}P_rE (R_{\Sigma}(\eta)E)^{k-s}P_r +
\\
&
\nonumber 
\frac{1}{\mu_r-\eta}(\tilde R_{\Sigma}(\eta)E)^k P_r\biggr]d\eta.
\end{align}
Since $P_r=\sum_{l\in \Delta_r}(\theta_l\otimes \theta_l),$ 
where $\{\theta_l:l\in \Delta_r\}$ is an arbitrary orthonormal basis of the eigenspace corresponding to 
the eigenvalue $\mu_r,$
we get that, 
for all $v\in {\mathbb H},$ 
\begin{align}
&
(\tilde R_{\Sigma}(\eta)E)^{s-1}P_rE (R_{\Sigma}(\eta)E)^{k-s}P_rv
=\sum_{l\in \Delta_r}
(\tilde R_{\Sigma}(\eta)E)^{s-1}(\theta_l\otimes \theta_l)
E (R_{\Sigma}(\eta)E)^{k-s}P_rv=
\\
&
\label{qua0}
=\sum_{l\in \Delta_r}
\Bigl\langle E (R_{\Sigma}(\eta)E)^{k-s}P_rv,\theta_l\Bigr\rangle 
(\tilde R_{\Sigma}(\eta)E)^{s-2}\tilde R_{\Sigma}(\eta)E\theta_l.
\end{align}
Clearly,
\begin{equation}
\nonumber
\Bigl|\Bigl\langle E (R_{\Sigma}(\eta)E)^{k-s}P_rv,\theta_l\Bigr\rangle\Bigr|
\leq \|R_{\Sigma}(\eta)\|_{\infty}^{k-s} \|E\|_{\infty}^{k-s+1} \|v\|,
\end{equation}
which implies that 
\begin{equation}
\label{qua1}
{\mathbb E}\Bigl|\Bigl\langle E (R_{\Sigma}(\eta)E)^{k-s}P_rv,\theta_l\Bigr\rangle\Bigr|^2
I(\|E\|_{\infty}\leq \delta_n)
\leq 
\biggl(\frac{2}{\bar g_r}\biggr)^{2(k-s)}\delta_n^{2(k-s+1)}\|v\|^2.
\end{equation}
We also have 
$$
(\tilde R_{\Sigma}(\eta)E)^{s-2}\tilde R_{\Sigma}(\eta)E\theta_l=
(\tilde R_{\Sigma}(\eta)E)^{s-2}\tilde R_{\Sigma}(\eta)(\hat \Sigma-\Sigma)\theta_l
$$
$$
=
(\tilde R_{\Sigma}(\eta)E)^{s-2}\tilde R_{\Sigma}(\eta)\hat \Sigma\theta_l
=
n^{-1}\sum_{j=1}^n \langle X_j,\theta_l\rangle 
(\tilde R_{\Sigma}(\eta)E)^{s-2}\tilde R_{\Sigma}(\eta)X_j,
$$
where we used the fact that 
$$
\tilde R_{\Sigma}(\eta) \Sigma\theta_l=
\mu_r\tilde R_{\Sigma}(\eta)\theta_l=0.
$$
It is easy to check that the random variables 
$(\tilde R_{\Sigma}(\eta)E)^{s-2}\tilde R_{\Sigma}(\eta)X_j, j=1,\dots, n$
are functions of random variables $P_s X_j: s\neq r, j=1,\dots, n$ 
that are independent of $\langle X_j,\theta_l\rangle, l\in \Delta_r, j=1,\dots, n$ (recall 
that $X_j, j=1,\dots, n$ are i.i.d. Gaussian, and $P_r X_j, j=1,\dots, n$ and 
$P_s X_j: s\neq r, j=1,\dots, n$ are uncorrelated and, hence, independent). Given $u\in {\mathbb H},$ denote 
$$
\zeta_j(u)=
\Bigl\langle (\tilde R_{\Sigma}(\eta)E)^{s-2}\tilde R_{\Sigma}(\eta)X_j,u\Bigr\rangle,\ j=1,\dots, n,
$$
(which are complex valued random variables). 
Write $\zeta_j(u)=\zeta_j^{(1)}(u)+i \zeta_j^{(2)}(u),$
where $\zeta_j^{(1)}(u), \zeta_j^{(2)}(u)$ are real valued.
Denote also 
$$
\alpha (u) := \alpha^{(1)}(u)+i\alpha^{(2)}(u):= 
\Bigl\langle((\tilde R_{\Sigma}(\eta)E)^{s-2}\tilde R_{\Sigma}(\eta)E\theta_l,u\Bigr\rangle.
$$
Then, conditionally on $P_s X_j: s\neq r, j=1,\dots, n,$
the random vector $(\alpha^{(1)}(u), \alpha^{(2)}(u))$ has the same 
distribution as mean zero Gaussian random vector in ${\mathbb R}^2$
with covariance 
$$
\frac{\mu_r}{n} \biggl(n^{-1}\sum_{j=1}^n \zeta_j^{(k_1)}(u)\zeta_j^{(k_2)}(u)\biggr),\ k_1,k_2=1,2.
$$ 
Note that 
$$
n^{-1}\sum_{j=1}^n |\zeta_j(u)|^2 = n^{-1}\sum_{j=1}^n \Bigl|\Bigl\langle (\tilde R_{\Sigma}(\eta)E)^{s-2}\tilde R_{\Sigma}(\eta)X_j,u\Bigr\rangle\Bigr|^2
$$
$$
=\Bigl\langle \hat \Sigma 
(\tilde R_{\Sigma}(\eta)E)^{s-2}\tilde R_{\Sigma}(\eta)u,
(\tilde R_{\Sigma}(\eta)E)^{s-2}\tilde R_{\Sigma}(\eta)u
\Bigr\rangle
\leq 
\|\hat \Sigma\|_{\infty} \|\tilde R_{\Sigma}(\eta)\|_{\infty}^{2(s-1)}\|E\|_{\infty}^{2(s-2)}\|u\|^2
$$
$$
\leq 
\biggl(\|\Sigma\|_{\infty} 
\|\tilde R_{\Sigma}(\eta)\|_{\infty}^{2(s-1)}\|E\|_{\infty}^{2(s-2)}
+\|\tilde R_{\Sigma}(\eta)\|_{\infty}^{2(s-1)}\|E\|_{\infty}^{2s-3}\biggr)\|u\|^2.
$$
Under the assumption
$\delta_n<\frac{\bar g_r}{2},$ the following inclusion holds:
$$
\Bigl\{\|E\|_{\infty}\leq \delta_n\Bigr\}\subset 
\biggl\{n^{-1}\sum_{j=1}^n |\zeta_j(u)|^2 \leq 
2\|\Sigma\|_{\infty} \biggl(\frac{2}{\bar g_r}\biggr)^{2(s-1)}\delta_n^{2(s-2)}\|u\|^2\biggr\}=: G.
$$
Therefore, we have 
\begin{align}
\label{qua2}
&
\nonumber
{\mathbb E}\Bigl|\Bigl\langle( \tilde R_{\Sigma}(\eta)E)^{s-2}\tilde R_{\Sigma}(\eta)E\theta_l,u\Bigr\rangle\Bigr|^2 I(\|E\|_{\infty}\leq \delta_n)
\leq  
{\mathbb E}\Bigl|\Bigl\langle(\tilde R_{\Sigma}(\eta)E)^{s-2}\tilde R_{\Sigma}(\eta)E\theta_l,u\Bigr\rangle\Bigr|^2 
I_G
\\
&
\nonumber
={\mathbb E}{\mathbb E}\biggl(\Bigl|\Bigl\langle((\tilde R_{\Sigma}(\eta)E)^{s-2}\tilde R_{\Sigma}(\eta)E\theta_l,u\Bigr\rangle\Bigr|^2 
I_G\Bigl| P_s X_j, s\neq r, j=1,\dots, n\biggr) 
\\
&
\nonumber
=
\frac{\mu_r}{n}{\mathbb E}{\mathbb E}\biggl(n^{-1}\sum_{j=1}^n |\zeta_j(u)|^2
I_G\Bigl| P_s X_j, s\neq r, j=1,\dots, n
\biggr)
\\
&
=
\frac{\mu_r}{n}{\mathbb E}n^{-1}\sum_{j=1}^n |\zeta_j(u)|^2
I_G
\leq 
2\|\Sigma\|_{\infty}\frac{\mu_r}{n}\biggl(\frac{2}{\bar g_r}\biggr)^{2(s-1)}\delta_n^{2(s-2)} \|u\|^2.
\end{align}
By (\ref{qua1}) and (\ref{qua2}),
\begin{align}
\label{qua_qua_1}
&
\nonumber
\Bigl|
{\mathbb E}\Bigl\langle E (R_{\Sigma}(\eta)E)^{k-s}P_rv,\theta_l\Bigr\rangle 
\Bigl\langle (\tilde R_{\Sigma}(\eta)E)^{s-2}\tilde R_{\Sigma}(\eta)E\theta_l,u\Bigr\rangle
I(\|E\|_{\infty}\leq \delta_n)\Bigr|
\\
&
\nonumber 
\leq \biggl({\mathbb E}\Bigl|\Bigl\langle E (R_{\Sigma}(\eta)E)^{k-s}P_rv,\theta_l\Bigr\rangle\Bigr|^2
I(\|E\|_{\infty}\leq \delta_n)\biggr)^{1/2}
\biggl({\mathbb E}\Bigl|\Bigl\langle((\tilde R_{\Sigma}(\eta)E)^{s-2}\tilde R_{\Sigma}(\eta)E\theta_l,u\Bigr\rangle\Bigr|^2 I(\|E\|_{\infty}\leq \delta_n)\biggr)^{1/2}
\\
&
\leq \sqrt{2}\frac{\|\Sigma\|_{\infty}}{\sqrt{n}}\biggl(\frac{2\delta_n}{\bar g_r}\biggr)^{k-1}\|u\|\|v\|
\end{align}
and it follows from (\ref{qua0}) and (\ref{qua_qua_1}) that 
\begin{align}
\label{qua_qua_2}
&
\biggl|{\mathbb E}\Bigl\langle (\tilde R_{\Sigma}(\eta)E)^{s-1}P_rE (R_{\Sigma}(\eta)E)^{k-s}P_rv,u\Bigr\rangle I(\|E\|_{\infty}\leq \delta_n)
\biggr|
\\
&
\nonumber
\leq 
\sum_{l\in \Delta_r}
\biggl|
{\mathbb E}\Bigl\langle E (R_{\Sigma}(\eta)E)^{k-s}P_rv,\theta_l\Bigr\rangle 
\Bigl\langle (\tilde R_{\Sigma}(\eta)E)^{s-2}\tilde R_{\Sigma}(\eta)E\theta_l,u\Bigr\rangle
I(\|E\|_{\infty}\leq \delta_n)
\biggr| 
\\
&
\nonumber
\leq 
\sqrt{2}m_r\frac{\|\Sigma\|_{\infty}}{\sqrt{n}}\biggl(\frac{2\delta_n}{\bar g_r}\biggr)^{k-1}\|u\|\|v\|.
\end{align}
Similarly, we also have 
\begin{equation}
\label{qua_qua_3}
\Bigl|{\mathbb E}\Bigl\langle (\tilde R_{\Sigma}(\eta)E)^k P_rv,u\Bigr\rangle\Bigr| \leq 
\sqrt{2}m_r\frac{\|\Sigma\|_{\infty}}{\sqrt{n}}\frac{2}{\bar g_r}\biggl(\frac{2\delta_n}{\bar g_r}\biggr)^{k-1}\|u\|\|v\|.
\end{equation}

Now use (\ref{Prperp}), (\ref{qua_qua_2}) and (\ref{qua_qua_3}) to get
(under assumption that $\delta_n\leq (1-\gamma)\frac{\bar g_r}{2}$)
\begin{align*}
&
\biggl|{\mathbb E}\Bigl\langle P_r^{\perp}S_r(E)P_rv,u\Bigr\rangle
I(\|E\|_{\infty}\leq \delta_n)\biggr| \leq
\\
&
\nonumber  
\sum_{k\geq 2}\frac{1}{2\pi}\oint_{\gamma_r} \biggl[\frac{1}{|\mu_r-\eta|^2}\sum_{s=2}^{k} \Bigl|{\mathbb E}\Bigl\langle (\tilde R_{\Sigma}(\eta)E)^{s-1}P_rE (R_{\Sigma}(\eta)E)^{k-s}P_rv,u\Bigr\rangle I(\|E\|_{\infty}\leq \delta_n)\Bigr| +
\\
&
\nonumber 
\frac{1}{|\mu_r-\eta|}\Bigl|{\mathbb E}\Bigl\langle(\tilde R_{\Sigma}(\eta)E)^k P_rv,u\Bigr\rangle I(\|E\|_{\infty}\leq \delta_n)\Bigr|\biggr]d\eta
\\
&
\nonumber 
\leq 
\sum_{k\geq 2} \frac{1}{2\pi} 2\pi \frac{\bar g_r}{2} \biggl(\frac{2}{\bar g_r}\biggr)^2 \sqrt{2}m_r\frac{\|\Sigma\|_{\infty}}{\sqrt{n}} k \biggl(\frac{2\delta_n}{\bar g_r}\biggr)^{k-1} \|u\|\|v\|
\\
&
\nonumber
=\sqrt{2}m_r\frac{\|\Sigma\|_{\infty}}{\sqrt{n}}\frac{2}{\bar g_r}
\sum_{k\geq 2}k \biggl(\frac{2\delta_n}{\bar g_r}\biggr)^{k-1}\|u\|\|v\|
\\
&
\nonumber
=\sqrt{2}m_r\frac{\|\Sigma\|_{\infty}}{\sqrt{n}}\frac{2}{\bar g_r}
\biggl(\biggl(1-\frac{2\delta_n}{\bar g_r}\biggr)^{-2}-1\biggr)\|u\|\|v\|\leq 
\frac{8\sqrt{2}}{\gamma^2} m_r\frac{\|\Sigma\|_{\infty}}{\sqrt{n}}\frac{\delta_n}{\bar g_r^2}\|u\|\|v\|.
\end{align*}
Therefore, 
\begin{equation}
\label{Prperp''}
\Bigl\|{\mathbb E}P_r^{\perp}S_r(E)P_r
I(\|E\|_{\infty}\leq \delta_n)\Bigr\|_{\infty}\leq 
\frac{8\sqrt{2}}{\gamma^2} m_r\frac{\|\Sigma\|_{\infty}}{\sqrt{n}}\frac{\delta_n}{\bar g_r^2}.
\end{equation}
Obviously, the same bound holds for 
$\Bigl\|{\mathbb E}P_r S_r(E)P_r^{\perp}
I(\|E\|_{\infty}\leq \delta_n)\Bigr\|_{\infty}.$ 
Moreover, similarly, it can be 
proved that 
\begin{equation}
\label{Prperp'''}
\Bigl\|{\mathbb E}P_r^{\perp}S_r(E)P_r^{\perp}
I(\|E\|_{\infty}\leq \delta_n)\Bigr\|_{\infty} \leq
c_{\gamma}m_r\frac{\|\Sigma\|_{\infty}}{\sqrt{n}}\frac{\delta_n}{\bar g_r^2}.
\end{equation}
with some constant $c_{\gamma}>0.$

To complete the proof, note that
\begin{align*}
&
\Bigl\|{\mathbb E}\Bigl(P_r^{\perp}S_r(E)P_r+P_r S_r(E)P_r^{\perp}+
P_r^{\perp}S_r(E)P_r^{\perp}\Bigr)I(\|E\|_{\infty}>\delta_n)\Bigr\|_{\infty}
\\
&
\nonumber
\leq 
{\mathbb E}\Bigl\|P_r^{\perp}S_r(E)P_r+P_r S_r(E)P_r^{\perp}+
P_r^{\perp}S_r(E)P_r^{\perp}\Bigr\|_{\infty}I(\|E\|_{\infty}>\delta_n)
\\
&
\nonumber
\leq {\mathbb E}\|S_r(E)\|_{\infty}I(\|E\|_{\infty}>\delta_n).
\end{align*}

Next, using the formula $\E\|E\|_{\infty}^2I(\|E\|_{\infty}>\delta_n) = 2\int_{\delta_n}^{\infty}t \mathbb P \left( \|E\|_{\infty}>t\right)dt$ and bound (\ref{remainder_A}) of Lemma \ref{lem-pert-spectral}, 
we get
\begin{align*}
&
{\mathbb E}\|S_r(E)\|_{\infty} I(\|E\|_{\infty}> \delta_n)
\leq 
\frac{28}{\bar g_r^2}\int_{\delta_n}^{\infty}t \mathbb P \left( \|E\|_{\infty}>t\right)dt
\\
&
\leq 
\frac{28}{\bar g_r^2}\int_{0}^{\infty} (\delta_n + u) \mathbb P \left( \|E\|_{\infty}-\E\|E\|_{\infty}>u\right)du
\\
&
\leq\frac{28}{\bar g_r^2}\left( \delta_n  \int_{0}^{\infty} \mathbb P \left( |\|E\|_{\infty}-\E\|E\|_{\infty}|>u\right)du  +\int_{0}^{\infty} u\mathbb P \left( |\|E\|_{\infty}-\E\|E\|_{\infty}|>u\right)du \right).
%
\end{align*}
Set $Z:=|\|E\|_{\infty}-\E\|E\|_{\infty}|$. Recall that, in view of Theorem \ref{th_operator} and condition (\ref{condition gamma_r-1}), we have $\mathbf{r}(\Sigma)\lesssim n$. Combining this fact with the bound (\ref{con_con}) of Theorem \ref{spectrum_sharper}, we get
\begin{align*}
\int_{0}^{\infty} \mathbb P \left( Z>u\right)du &= \int_{0}^{n} \mathbb P \left( Z>u\right)du  + \int_{n}^{\infty}\mathbb P \left( Z>u\right)du\\
&=\int_{0}^{n} \mathbb P \left( Z>C\|\Sigma\|_{\infty} \sqrt{\frac{t}{n}}\right)\frac{ \|\Sigma\|_{\infty}}{2\sqrt{tn}}dt  + \int_{n}^{\infty} \mathbb{P} \left( Z>C\|\Sigma\|_{\infty} \frac{t}{n}\right)\frac{ \|\Sigma\|_{\infty}}{n}dt\\
&\lesssim \|\Sigma\|_{\infty} \frac{1}{\sqrt{n}},
\end{align*}
By a similar reasoning, we get that
\begin{align*}
\int_{0}^{\infty} u  \mathbb  P  \left( Z>u\right)du \lesssim \|\Sigma\|_{\infty}^2 \frac{1}{n}.
\end{align*}
Combining the last three displays with Theorem \ref{th_operator}, we get 
\begin{align*}
&
{\mathbb E}\|S_r(E)\|_{\infty} I(\|E\|_{\infty}> \delta_n)
\lesssim
\frac{1}{\bar g_r^2} \|\Sigma\|_{\infty} \left(\delta_n\frac{1}{\sqrt{n}}  + \|\Sigma\|_{\infty}\frac{1}{n}\right).
\end{align*}
It remains to observe that the condition (\ref{condition gamma_r-1}) and Theorem \ref{th_operator} gives $\delta_n \lesssim \|\Sigma\|_{\infty} \sqrt{\frac{\mathbf{r}(\Sigma)}{n}}$. Consequently, we get that 
\begin{align}
\label{kone}
&
\Bigl\|{\mathbb E}\Bigl(P_r^{\perp}S_r(E)P_r+P_r S_r(E)P_r^{\perp}+
P_r^{\perp}S_r(E)P_r^{\perp}\Bigr)I(\|E\|_{\infty}>\delta_n)\Bigr\|_{\infty}
\lesssim \frac{\|\Sigma\|_{\infty}^2}{\bar g_r^2}\sqrt{\frac{{\bf r}(\Sigma)}{n}}\frac{1}{\sqrt{n}}.
\end{align}
Bound (\ref{T_r}) now follows from representation (\ref{SrPrE}), bounds (\ref{Prperp''}), (\ref{Prperp'''}) and (\ref{kone}).
\qed
\end{proof}

\section{Asymptotics of Bilinear Forms of Empirical Spectral Projectors}\label{Sec:Asymptotic}

In this section, we study the asymptotic behavior of the bilinear forms  
$$\Bigl\langle (\hat P_r -{\mathbb E}\hat P_r) u,v\Bigr\rangle, u,v\in {\mathbb H}$$
in the case when the sample size $n$ and the effective rank ${\bf r}(\Sigma)$ are both large. To describe this precisely, one has to deal with a sequence of 
problems in which the data is sampled from Gaussian distributions in ${\mathbb H}$
with mean zero and covariance $\Sigma=\Sigma^{(n)}.$ This leads to the following 
asymptotic framework. Let $X=X^{(n)}$ be a centered Gaussian random vector in ${\mathbb H}$ with covariance operator $\Sigma=\Sigma^{(n)}$ and let  
$X_1=X_1^{(n)},\dots, X_n=X_n^{(n)}$ be i.i.d. copies of $X^{(n)}.$ The sample covariance based on $(X_1^{(n)},\dots, X_n^{(n)})$ is 
denoted by $\hat \Sigma_n.$ 
Let $\sigma(\Sigma^{(n)})$ be the spectrum of $\Sigma^{(n)},$ $\mu_r^{(n)}, r\geq 1$ be distinct nonzero eigenvalues of $\Sigma^{(n)}$ arranged in decreasing order  and $P_r^{(n)}, r\geq 1$ be the corresponding spectral projectors.
As before, denote $\Delta_r^{(n)}:=\{j: \sigma_j(\Sigma^{(n)})=\mu_r^{(n)}\}$
and let $\hat P_r^{(n)}$ be the orthogonal projector on the direct sum 
of eigenspaces corresponding to the eigenvalues $\{\sigma_j(\hat \Sigma_n), j\in \Delta_r^{(n)}\}.$

The next assumption means that, for large enough $n,$
there exists a unique eigenvalue $\mu^{(n)}$ of $\Sigma^{(n)}$ isolated  
inside a fixed interval from the rest of the spectrum of $\Sigma^{(n)}.$

\begin{assumption}
\label{separate_spectrum}
There exists an interval $(\alpha,\beta)\subset {\mathbb R}_{+}$ and a 
number $\delta>0$ 
such that, for all large enough $n,$ the set $\sigma(\Sigma^{(n)})\cap (\alpha,\beta)$ consists of a single eigenvalue $\mu^{(n)}=\mu_{r_n}^{(n)}$ of $\Sigma^{(n)}$ and    
$$
\sigma(\Sigma^{(n)})\setminus \{\mu^{(n)}\} \subset {\mathbb R}_{+}
\setminus (\alpha-\delta, \beta+\delta).
$$
\end{assumption}

Denote by $P^{(n)}$ the spectral projector corresponding  
to the eigenvalue $\mu^{(n)}$ and define the following sequence of operators: 
$$
C^{(n)}:=\sum_{\mu_s^{(n)}\neq \mu^{(n)}}\frac{1}{\mu^{(n)}-\mu_s^{(n)}}P_s^{(n)}.
$$

Consider the spectral measures associated with 
the covariance operators $\Sigma^{(n)}:$ 
$$
\Lambda^{(n)}_{u,v}(A):=\sum_{r=1}^{\infty}
\Bigl\langle P_r^{(n)}u,v\Bigr\rangle I_A(\mu_r^{(n)}), u,v\in {\mathbb H}, A\in 
{\mathcal B}({\mathbb R}_+),
$$
where ${\mathcal B}({\mathbb R}_+)$ denotes the Borel $\sigma$-algebra in ${\mathbb R}_+.$

\begin{assumption}
\label{weak_spectral_measure}
For all $u,v\in {\mathbb H},$ the sequence of measures $\Lambda^{(n)}_{u,v}$ converges weakly to a measure $\Lambda_{u,v}$
in ${\mathbb R}_{+}.$
Also assume that there exists $u\in {\mathbb H}$ such that 
$\Lambda_{u,u}([\alpha,\beta])>0.$
\end{assumption}
 
It turns out that the following assumption, which is somewhat easier to understand,
implies Assumption \ref{weak_spectral_measure} and even its stronger version.

\begin{assumption}
\label{weak_spectral_measure''}
Suppose the sequence of covariance operators $\Sigma^{(n)}$ with 
$\sup_{n\geq 1}\|\Sigma^{(n)}\|_{\infty}<+\infty$ converges strongly to a bounded 
symmetric nonnegatively definite operator $\Sigma:{\mathbb H}\mapsto {\mathbb H}$ (that is, $\Sigma^{(n)}u\to \Sigma u$ as $n\to\infty$ for all $u\in {\mathbb H}$). Let $E(\cdot)$ be the decomposition
of identity associated with $\Sigma.$\footnote{This means that $E(\cdot)$ is a projector 
valued measure on Borel subsets of ${\mathbb R}_{+},$ such that $E(\Delta)E(\Delta')=E(\Delta\cap \Delta'),$ $E({\mathbb R}_+)=I$ and $\Sigma=\int_{{\mathbb R}_{+}}\lambda E(d\lambda).$} 
Suppose also that there exists $u\in {\mathbb H}$ such that 
$\Bigl\langle E([\alpha,\beta])u,u\Bigr\rangle>0.$
\end{assumption}

\begin{proposition}
\label{prop_dec_id}
Assumption \ref{weak_spectral_measure''} implies Assumption \ref{weak_spectral_measure}. Moreover, it implies that, for all $u,v\in {\mathbb H}$ and for all sequences $u_n\to u,$ $v_n\to v$ as $n\to \infty,$ the sequence of 
measures $\Lambda^{(n)}_{u_n,v_n}$ converges weakly to $\Lambda_{u,v}.$
\end{proposition}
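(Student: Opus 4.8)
The plan is to reduce weak convergence of the measures $\Lambda^{(n)}_{u,v}$ to convergence of the bilinear forms $\langle f(\Sigma^{(n)})u,v\rangle$ via the functional calculus for the self-adjoint operators $\Sigma^{(n)}$ and $\Sigma$. Since $\Sigma^{(n)}$ is compact, symmetric and nonnegatively definite, for any bounded Borel $f$ with $f(0)=0$ one has $\int_{{\mathbb R}_+}f\,d\Lambda^{(n)}_{u,v}=\langle f(\Sigma^{(n)})u,v\rangle$ (the normalization $f(0)=0$ accounts for the fact that the sum defining $\Lambda^{(n)}_{u,v}$ runs only over the nonzero eigenvalues $\mu^{(n)}_r$). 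Letting $E(\cdot)$ be the decomposition of identity of $\Sigma$, the natural candidate for the limit is $\Lambda_{u,v}(A):=\langle E(A)u,v\rangle$, for which $\int f\,d\Lambda_{u,v}=\langle f(\Sigma)u,v\rangle$ for every bounded Borel $f$ with $f(0)=0$. So it suffices to prove $\langle f(\Sigma^{(n)})u,v\rangle\to\langle f(\Sigma)u,v\rangle$ for a class of test functions $f$ rich enough to characterize the relevant measures.

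First I would treat monomials. Set $M:=\sup_{n}\|\Sigma^{(n)}\|_\infty<\infty$; then also $\|\Sigma\|_\infty\leq M$ and $\Sigma\geq0$, so the spectra of all operators involved lie in $[0,M]$. For fixed $m\geq1$ the telescoping identity
\[
(\Sigma^{(n)})^m-\Sigma^m=\sum_{j=0}^{m-1}(\Sigma^{(n)})^{j}(\Sigma^{(n)}-\Sigma)\Sigma^{m-1-j},
\]
combined with $\|(\Sigma^{(n)})^{j}\|_\infty\leq M^{j}$ and the strong convergence $(\Sigma^{(n)}-\Sigma)w\to0$ applied to the fixed vectors $w=\Sigma^{m-1-j}u$, yields $(\Sigma^{(n)})^m u\to\Sigma^m u$ in ${\mathbb H}$, hence $\langle(\Sigma^{(n)})^m u,v\rangle\to\langle\Sigma^m u,v\rangle$. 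Therefore $\langle p(\Sigma^{(n)})u,v\rangle\to\langle p(\Sigma)u,v\rangle$ for every polynomial $p$ with $p(0)=0$. For a general continuous $f$ on $[0,M]$ with $f(0)=0$ pick, by the Weierstrass theorem, a polynomial $p$ with $p(0)=0$ and $\|f-p\|_{C([0,M])}\leq\varepsilon$; since the spectra lie in $[0,M]$, the spectral mapping bound gives $\|(f-p)(\Sigma^{(n)})\|_\infty\leq\varepsilon$ uniformly in $n$ and $\|(f-p)(\Sigma)\|_\infty\leq\varepsilon$, so $\langle f(\Sigma^{(n)})u,v\rangle\to\langle f(\Sigma)u,v\rangle$. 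Applying this to functions $f$ continuous with support in a compact subset of $(0,\infty)$ (extended by $0$) gives $\Lambda^{(n)}_{u,v}\to\Lambda_{u,v}$ weakly on ${\mathbb R}_+$, which is the first claim of Assumption \ref{weak_spectral_measure}; its remaining clause is literally the last hypothesis of Assumption \ref{weak_spectral_measure''}, since $\Lambda_{u,u}([\alpha,\beta])=\langle E([\alpha,\beta])u,u\rangle$.

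For the strengthened statement with $u_n\to u$, $v_n\to v$, I would split, for a test function $f$ as above,
\[
\langle f(\Sigma^{(n)})u_n,v_n\rangle-\langle f(\Sigma^{(n)})u,v\rangle=\langle f(\Sigma^{(n)})(u_n-u),v_n\rangle+\langle f(\Sigma^{(n)})u,v_n-v\rangle,
\]
and bound the right-hand side by $\|f\|_{C([0,M])}\bigl(\|u_n-u\|\,\|v_n\|+\|u\|\,\|v_n-v\|\bigr)$ using $\|f(\Sigma^{(n)})\|_\infty\leq\|f\|_{C([0,M])}$; since $\|v_n\|$ is bounded this vanishes, and together with the already established convergence $\langle f(\Sigma^{(n)})u,v\rangle\to\int f\,d\Lambda_{u,v}$ it gives $\int f\,d\Lambda^{(n)}_{u_n,v_n}\to\int f\,d\Lambda_{u,v}$.

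The one place requiring care, and what I expect to be the main obstacle to a fully "clean" argument, is the behavior near $\lambda=0$. The measure $\Lambda^{(n)}_{u,v}$ discards the kernel of $\Sigma^{(n)}$, so its total mass $\Lambda^{(n)}_{u,u}({\mathbb R}_+)=\|u\|^2-\|P_0^{(n)}u\|^2$ (with $P_0^{(n)}$ the projector onto $\ker\Sigma^{(n)}$) need not converge — even when all $\Sigma^{(n)}$ are trace class, since the kernels may be large and "oscillate" — so one cannot simply invoke the classical method of moments on $[0,M]$ against all continuous functions including constants. This is precisely why the test functions are taken with support bounded away from the origin (i.e.\ weak convergence is understood on the locally compact space $(0,\infty)$); this is harmless for the applications of the proposition, since the isolated eigenvalue $\mu^{(n)}$ of Assumption \ref{separate_spectrum} stays inside a fixed compact subinterval of $(0,\infty)$. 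An equivalent route is to re-adjoin the mass $\|P_0^{(n)}u\|^2\,\delta_0$, apply the method of moments on $[0,M]$ using the convergence of $\langle(\Sigma^{(n)})^m u,v\rangle$ obtained above, and then note that only agreement of the limiting measure with $\langle E(\cdot)u,v\rangle$ on $(0,\infty)$ is asserted.
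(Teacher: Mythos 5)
Your proof is correct, but it follows a genuinely different route from the paper's. The paper's proof is very short: it simply observes that $\Lambda^{(n)}_{u,v}(\cdot)=\langle E^{(n)}(\cdot)u,v\rangle$ for the spectral families $E^{(n)}$ of $\Sigma^{(n)}$, and then invokes the classical theorem (Riesz--Sz.-Nagy, Ch.\ IX, \S134) that uniform boundedness together with strong convergence $\Sigma^{(n)}\to\Sigma$ forces strong convergence $E^{(n)}([0,\lambda])\to E([0,\lambda])$ at every $\lambda$ outside the point spectrum of $\Sigma$, from which weak convergence of $\Lambda^{(n)}_{u_n,v_n}$ to $\Lambda_{u,v}$ is immediate. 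You instead rebuild the statement from scratch by the method of moments: the telescoping identity $(\Sigma^{(n)})^m-\Sigma^m=\sum_{j}(\Sigma^{(n)})^j(\Sigma^{(n)}-\Sigma)\Sigma^{m-1-j}$ gives $\langle(\Sigma^{(n)})^m u,v\rangle\to\langle\Sigma^m u,v\rangle$, Weierstrass approximation then lifts this to all continuous $f$ on $[0,M]$ with $f(0)=0$ (with the uniform spectral bound $\|f(\Sigma^{(n)})\|_\infty\leq\|f\|_{C([0,M])}$ controlling the error), and finally a crude operator-norm estimate absorbs $u_n\to u$, $v_n\to v$. The two approaches buy different things: the paper's is a one-line citation of a deep spectral-theoretic fact, while yours is elementary and fully self-contained (at the cost of being longer). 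I would also say your closing discussion of the $\lambda=0$ issue is a genuine improvement in clarity over the paper: the identity $\Lambda^{(n)}_{u,v}=\langle E^{(n)}(\cdot)u,v\rangle$ used in the paper silently ignores the kernel mass $\langle P_0^{(n)}u,v\rangle$ (which certainly need not converge when, e.g., $\Sigma^{(n)}=P_{L_n}\Sigma P_{L_n}$ has a large kernel), and both proofs really establish only vague convergence on $(0,\infty)$ --- which, as you correctly observe, is exactly what Lemma \ref{cov_lim} and the rest of the paper consume, since the test functions appearing there are supported on a compact set and vanish at $0$.
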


\begin{proof}
Under Assumption \ref{weak_spectral_measure''}, define 
$$
\Lambda_{u,v}(\Delta)=\Bigl\langle E(\Delta)u,v\Bigr\rangle, \Delta \in {\mathcal B}({\mathbb R}_+), u,v\in {\mathbb H}. 
$$
Let $E^{(n)}(\cdot)$ be the decomposition of identity associated with $\Sigma^{(n)}.$
Then $\Lambda^{(n)}_{u,v}(\cdot)=\Bigl\langle E^{(n)}(\cdot)u,v\Bigr\rangle.$
It is well known (see, e.g., \cite{Riesz}, Ch. IX, Section 134) that the uniform boundedness of $\|\Sigma^{(n)}\|_{\infty}$ and strong convergence of operators $\Sigma^{(n)}$ to $\Sigma$ implies strong convergence of $E^{(n)}([0,\lambda])$ to $E([0,\lambda])$ for 
all $\lambda$ that do not belong to the point spectrum of $\Sigma,$ 
which easily implies the weak convergence of measures $\Lambda^{(n)}_{u_n,v_n}$ to $\Lambda_{u,v}.$  
\qed
\end{proof}

We will also need the following simple proposition (its
proof is elementary).

\begin{proposition}
\label{prop_dec_1}
Suppose assumptions \ref{separate_spectrum} and \ref{weak_spectral_measure''} hold.
Suppose also that $\mu^{(n)}$ is an eigenvalue of multiplicity $1.$ Then, the 
corresponding spectral projector $P^{(n)}=\theta^{(n)}\otimes \theta^{(n)},$
where $\theta^{(n)}$ is the eigenvector corresponding to $\mu^{(n)}$ and, for 
some $\theta\in {\mathbb H},$  
$\theta^{(n)}\to \theta$ as $n\to\infty.$  
\end{proposition}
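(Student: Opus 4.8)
The plan is to realize $P^{(n)}$ as a difference of spectral projectors of $\Sigma^{(n)}$ onto half–lines $[0,a]$ and $[0,b]$ that straddle the isolating interval, pass to the limit via the strong convergence of spectral decompositions already used in the proof of Proposition~\ref{prop_dec_id}, and then exploit that each $P^{(n)}$ has rank one to identify the limiting projector and hence the limiting eigenvector.

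First I would fix the cutoffs. The point spectrum of $\Sigma$ is at most countable, so we may pick $a\in(\alpha-\delta,\alpha)$ and $b\in(\beta,\beta+\delta)$ not belonging to it. Let $E^{(n)}(\cdot)$ and $E(\cdot)$ be the decompositions of identity of $\Sigma^{(n)}$ and $\Sigma$. By Assumption~\ref{separate_spectrum}, for all large $n$ the only point of $\sigma(\Sigma^{(n)})$ in $(a,b]$ is $\mu^{(n)}$, so
$$P^{(n)}=E^{(n)}((a,b])=E^{(n)}([0,b])-E^{(n)}([0,a]),$$
and, since $\mu^{(n)}$ has multiplicity one, $P^{(n)}=\theta^{(n)}\otimes\theta^{(n)}$ with $\|\theta^{(n)}\|=1$. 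By the result of \cite{Riesz} invoked in the proof of Proposition~\ref{prop_dec_id}, $E^{(n)}([0,a])\to E([0,a])$ and $E^{(n)}([0,b])\to E([0,b])$ strongly; hence $P^{(n)}\to P:=E((a,b])$ strongly, and $P$ is an orthogonal projector. Moreover $[\alpha,\beta]\subset(a,b]$ gives $\langle Pu,u\rangle\ge\langle E([\alpha,\beta])u,u\rangle$, which is strictly positive for some $u\in{\mathbb H}$ by Assumption~\ref{weak_spectral_measure''}; thus $P\neq0$.

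Next I would show that $P$ has rank one. If not, choose orthonormal $e_1,e_2$ in the range of $P$; writing $c_i^{(n)}:=\langle\theta^{(n)},e_i\rangle$, we then have $c_i^{(n)}\theta^{(n)}=P^{(n)}e_i\to Pe_i=e_i$, so $|c_i^{(n)}|\to1$, while $c_1^{(n)}c_2^{(n)}=\langle c_1^{(n)}\theta^{(n)},c_2^{(n)}\theta^{(n)}\rangle\to\langle e_1,e_2\rangle=0$, a contradiction. Hence $P=\theta\otimes\theta$ for some unit vector $\theta\in{\mathbb H}$. Finally, applying $P^{(n)}\to P$ to $\theta$ yields $c^{(n)}\theta^{(n)}\to\theta$ with $c^{(n)}:=\langle\theta^{(n)},\theta\rangle$ and $|c^{(n)}|\to1$; choosing the sign of the eigenvector $\theta^{(n)}$ so that $c^{(n)}\ge0$, we get $c^{(n)}\to1$ and therefore $\theta^{(n)}=(c^{(n)})^{-1}(c^{(n)}\theta^{(n)})\to\theta$, as claimed.

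The bookkeeping in the first two paragraphs is routine given Assumption~\ref{separate_spectrum} and the convergence of spectral decompositions. The one point that requires care is that strong convergence of projectors is too weak, by itself, to control the rank of the limit (a priori $P$ could even vanish), which is precisely why the rank–one structure of the $P^{(n)}$ must be used explicitly through the orthonormality computation above; I regard this as the crux of the argument, although it remains entirely elementary.
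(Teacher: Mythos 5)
The paper gives no proof here, dismissing it as elementary, so there is nothing of the authors' to compare against. Your argument is correct and is the natural one: identify $P^{(n)}$ with $E^{(n)}((a,b])$ for points $a,b$ outside the point spectrum of $\Sigma$ straddling the isolating interval, invoke the Riesz--Nagy strong convergence of spectral resolutions already cited in the proof of Proposition~\ref{prop_dec_id}, and then exploit the rank-one structure of the $P^{(n)}$ to rule out a zero or higher-rank limit. The point you flag as the crux — that strong operator convergence of projectors does not by itself control the rank of the limit — is indeed where the only real care is required, and your orthonormality contradiction together with Assumption~\ref{separate_spectrum} and the nondegeneracy hypothesis $\langle E([\alpha,\beta])u,u\rangle>0$ handles it cleanly, including the sign normalization needed to make $\theta^{(n)}\to\theta$ meaningful.
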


As a typical example where Assumption \ref{weak_spectral_measure''} holds, 
consider the case of $\Sigma^{(n)}=P_{L_n} \Sigma P_{L_n}$ for a sequence 
of subspaces $L_n\subset {\mathbb H}$ with ${\rm dim}(L_n)\to \infty$
and $\bigcup_{n\geq 1}L_n$ being dense in ${\mathbb H}$ (see also the discussion 
of general spiked covariance models in Section \ref{Sec:Intro}).

Denote
$$
\Gamma_1(u,v):=\int_{\alpha}^{\beta} \lambda \Lambda_{u,v}(d\lambda),\ \ 
\Gamma_2(u,v):= 
\int_{{\mathbb R}_+\setminus [\alpha,\beta]} \frac{\lambda}{(\mu-\lambda)^2} \Lambda_{u,v}(d\lambda)
$$
and 
\begin{align}
\nonumber
&
\Gamma (u,v;u',v'):=
\\
\nonumber
&\Gamma_1(v,v')\Gamma_2(u,u')+
\Gamma_1(v,u')\Gamma_2(u,v')+
\Gamma_1(u,u')\Gamma_2(v,v')+
\Gamma_1(u,v')\Gamma_2(v,u').
\end{align}

\begin{theorem}
\label{asymptotic_normality_1}
Suppose that 
\begin{equation}
\label{bounded_norm}
\sup_{n\geq 1}\|\Sigma^{(n)}\|_{\infty}<\infty
\end{equation}
and 
\begin{equation}
\label{assume_rank}
{\bf r}(\Sigma^{(n)})=o(n)\ {\rm as}\ n\to\infty.
\end{equation}
Also, suppose that assumptions \ref{separate_spectrum} and \ref{weak_spectral_measure}
hold. Let $\hat P^{(n)}:=\hat P_{r_n}^{(n)}.$
Then, 
the finite dimensional distributions of stochastic processes
$$
n^{1/2}\Bigl\langle (\hat P^{(n)}-{\mathbb E}\hat P^{(n)})u,v\Bigr\rangle,\ u,v\in {\mathbb H} 
$$
converge weakly as $n\to \infty$ 
to the finite dimensional distributions of the centered Gaussian process $Y(u,v), u,v\in {\mathbb H}$
with covariance function $\Gamma.$ 

If, in addition, Assumption \ref{weak_spectral_measure''}
holds, then, for all $\varphi_n, \psi_n:{\mathbb H}\mapsto {\mathbb H}$
such that $\varphi_n(u)\to u, \psi_n(u)\to u$ as $n\to\infty$ for all $u\in {\mathbb H},$ 
the finite dimensional distributions of stochastic processes
$$
n^{1/2}\Bigl\langle (\hat P^{(n)}-{\mathbb E}\hat P^{(n)})\varphi_n(u),\psi_n(v)\Bigr\rangle,\ u,v\in {\mathbb H} 
$$
converge weakly as $n\to \infty$ to the same limit.
\end{theorem}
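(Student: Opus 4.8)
The plan is to reduce the statement to a central limit theorem for the linear term $L_r$ via the decomposition $\hat P^{(n)}-{\mathbb E}\hat P^{(n)} = L^{(n)} + R^{(n)}$ established in Section~\ref{Sec:Representation}, and then compute the limiting covariance. First I would check that Assumption~\ref{weak_spectral_measure} together with \eqref{bounded_norm} and the spectral separation in Assumption~\ref{separate_spectrum} guarantee that the spectral gap $\bar g_{r_n}^{(n)}$ is bounded below by a positive constant (namely $\delta$, from Assumption~\ref{separate_spectrum}) for all large $n$; combined with \eqref{assume_rank} and Theorem~\ref{th_operator}, this verifies that condition \eqref{condition gamma_r-1} holds eventually with some fixed $\gamma\in(0,1)$, so Theorem~\ref{technical_1} applies. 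Consequently, for fixed $u,v\in{\mathbb H}$, the remainder satisfies $\langle R^{(n)}u,v\rangle = o_{\mathbb P}(n^{-1/2})$ by \eqref{Rr}, so it suffices to prove that the finite-dimensional distributions of $n^{1/2}\langle L^{(n)}u,v\rangle$ converge to those of a centered Gaussian process with covariance $\Gamma$.

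Next I would invoke the representation
$$
n^{1/2}\langle L^{(n)}u,v\rangle = n^{-1/2}\sum_{j=1}^n\bigl(\xi_j^{(n)}(u,v)+\xi_j^{(n)}(v,u)\bigr),
$$
where $\xi_j^{(n)}(u,v)=\langle X_j^{(n)},P^{(n)}v\rangle\langle X_j^{(n)},C^{(n)}u\rangle$. For a fixed finite collection $(u_i,v_i)_{i\le k}$, any linear combination is again a normalized sum of i.i.d.\ centered random variables, so I would apply the Lindeberg--Feller CLT. The variance computation is the one carried out in Section~\ref{Sec:Representation}: the covariance of the limit is built from $\langle P^{(n)}\Sigma^{(n)}P^{(n)}\cdot,\cdot\rangle$ and $\langle C^{(n)}\Sigma^{(n)}C^{(n)}\cdot,\cdot\rangle$. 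The key point is to identify the limits of these two bilinear forms. Writing $\langle P^{(n)}\Sigma^{(n)}P^{(n)}v,v'\rangle = \mu^{(n)}\langle P^{(n)}v,v'\rangle = \int_\alpha^\beta\lambda\,\Lambda^{(n)}_{v,v'}(d\lambda)$ and $\langle C^{(n)}\Sigma^{(n)}C^{(n)}u,u'\rangle = \int_{{\mathbb R}_+\setminus[\alpha,\beta]}\frac{\lambda}{(\mu^{(n)}-\lambda)^2}\,\Lambda^{(n)}_{u,u'}(d\lambda)$, weak convergence of $\Lambda^{(n)}_{u,v}$ to $\Lambda_{u,v}$ (Assumption~\ref{weak_spectral_measure}) together with the uniform separation $\mu^{(n)}\in(\alpha,\beta)$, $\mathrm{dist}(\mu^{(n)},\sigma(\Sigma^{(n)})\setminus\{\mu^{(n)}\})\ge\delta$, makes the integrand $\lambda/(\mu-\lambda)^2$ bounded and continuous on the support, so the integrals converge to $\Gamma_1(v,v')$ and $\Gamma_2(u,u')$ respectively (here one also uses that $\mu^{(n)}\to\mu$, which follows from weak convergence and the assumption $\Lambda_{u,u}([\alpha,\beta])>0$). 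This yields exactly the stated covariance $\Gamma$. The Lindeberg condition is routine: $\xi_j^{(n)}$ is a product of two jointly Gaussian (independent) linear functionals with uniformly bounded variances, hence has uniformly bounded fourth moments, so the normalized triangular array satisfies Lyapunov's condition with exponent $2$.

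For the second part with perturbed arguments $\varphi_n(u)\to u$, $\psi_n(v)\to v$, I would argue that under the additional Assumption~\ref{weak_spectral_measure''}, Proposition~\ref{prop_dec_id} upgrades the convergence to $\Lambda^{(n)}_{\varphi_n(u),\psi_n(v)}\Rightarrow\Lambda_{u,v}$, so the variance computation above goes through verbatim with $u,v$ replaced by $\varphi_n(u),\psi_n(v)$; the Lyapunov bound is uniform since $\|\varphi_n(u)\|$, $\|\psi_n(v)\|$ are bounded (a convergent sequence is bounded). One small subtlety is that $\langle R^{(n)}\varphi_n(u),\psi_n(v)\rangle = o_{\mathbb P}(n^{-1/2})$ needs the bound of Theorem~\ref{technical_1} to be uniform in the arguments — but \eqref{remaind} is already stated with an explicit $\|u\|\|v\|$ factor and constants depending only on $\gamma$ and the spectral quantities, so replacing $u,v$ by the bounded sequences $\varphi_n(u),\psi_n(v)$ changes nothing. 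I expect the main obstacle to be the careful bookkeeping in the first step: verifying that the single isolated eigenvalue setup of Assumptions~\ref{separate_spectrum}--\ref{weak_spectral_measure} really does imply, uniformly in $n$, the hypothesis \eqref{condition gamma_r-1} needed to invoke Theorem~\ref{technical_1}, and that $\mu^{(n)}$ genuinely converges (so that the limiting measures $\Gamma_1,\Gamma_2$ are well defined with a single limiting $\mu$). Everything after that is the standard CLT-plus-continuous-mapping routine.
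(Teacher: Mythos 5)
Your proposal is correct and takes essentially the same route as the paper: the decomposition into the linear term $L^{(n)}(E^{(n)})$ plus a remainder $R^{(n)}$ controlled by Theorem~\ref{technical_1}, followed by the Lindeberg--Feller CLT for the i.i.d.\ sum underlying $L^{(n)}$ and the identification of the limiting covariance $\Gamma$ via weak convergence of the spectral measures (the paper packages these last two steps as Lemma~\ref{cov_lim} and Lemma~\ref{linear_clt}). Your preliminary observation that Assumption~\ref{separate_spectrum} forces $\bar g_{r_n}^{(n)}\ge\delta$, which combined with \eqref{bounded_norm}, \eqref{assume_rank} and Theorem~\ref{th_operator} yields \eqref{condition gamma_r-1} for all large $n$, is exactly the implicit bookkeeping that makes Theorem~\ref{technical_1} applicable, and your treatment of the $\varphi_n,\psi_n$ variant via Proposition~\ref{prop_dec_id} matches the paper.
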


\begin{proof} 
We prove only the first claim. The modifications needed to establish the second
claim are rather obvious. 
The proof is based on the following representation of $\hat P^{(n)}-P^{(n)}:$
\begin{equation}
\label{P^{(n)}}
\hat P^{(n)}-{\mathbb E}\hat P^{(n)} = L^{(n)}(E^{(n)})+ R^{(n)},
\end{equation}
where 
$$
L^{(n)}(E^{(n)})= P^{(n)}E^{(n)}C^{(n)}+C^{(n)}E^{(n)}P^{(n)}, 
\ \ E^{(n)}:=\hat \Sigma_n-\Sigma^{(n)}
$$
and where the remainder $R^{(n)}$ will be controlled using Theorem \ref{technical_1}. 

In addition to this, to show the asymptotic normality of 
$\Bigl\langle L^{(n)}(E^{(n)})u,v\Bigr\rangle,$
we need a couple of lemmas based on assumptions \ref{separate_spectrum} and \ref{weak_spectral_measure}.

\begin{lemma}
\label{cov_lim}
Under the assumptions \ref{separate_spectrum} and \ref{weak_spectral_measure},
the following statements hold.\\
(i) There exists $\mu\in [\alpha,\beta]$ such that 
$$
\mu^{(n)}\to \mu\ {\rm as}\ n\to\infty. 
$$
\\
(ii) For all $u,v\in {\mathbb H},$ 
$$
\Bigl\langle P^{(n)}u,v\Bigr\rangle \to \Lambda_{u,v}([\alpha,\beta])\ {\rm as}\ n\to\infty.
$$
\\
(iii) For all $u,v\in {\mathbb H},$
$$
\Bigl\langle P^{(n)}\Sigma^{(n)}P^{(n)}u,v\Bigr\rangle \to 
\Gamma_1(u,v)\ {\rm as}\ n\to\infty.
$$
\\
(iv) For all $u,v\in {\mathbb H},$
$$
\Bigl\langle C^{(n)}\Sigma^{(n)}C^{(n)}u,v\Bigr\rangle \to 
\Gamma_2(u,v)\ {\rm as}\ n\to\infty.
$$
\end{lemma}

\begin{proof}
We start with proving (ii). 
In view of Assumption \ref{separate_spectrum}, for all $\delta'<\delta,$
$$
\Lambda^{(n)}_{u,v}((\alpha-\delta',\beta+\delta'))=
\Lambda^{(n)}_{u,v}(\{\mu^{(n)}\})=\langle P^{(n)}u,v\rangle.
$$
We can choose $\delta'$ such that $\alpha-\delta'$ and $\beta+\delta'$
are not atoms of $\Lambda_{u,v}.$ Therefore, 
by Assumption \ref{weak_spectral_measure},
$$
\Bigl\langle P^{(n)}u,v\Bigr\rangle =\Lambda^{(n)}_{u,v}((\alpha-\delta',\beta+\delta'))\to 
\Lambda_{u,v}((\alpha-\delta',\beta+\delta'))\ {\rm as}\ n\to\infty
$$
for all such $\delta'.$ Note that the limit does not depend 
on $\delta'.$ It is enough now to let $\delta'\to 0$
to get (ii). 

To prove (iii), note that, for the same $\delta'$ as in the previous step,
$$
\Bigl\langle P^{(n)}\Sigma^{(n)}P^{(n)}u,v\Bigr\rangle=
\int_{\alpha-\delta'}^{\beta+\delta'}\lambda \Lambda^{(n)}_{u,v}(d\lambda)
\to 
\int_{\alpha-\delta'}^{\beta+\delta'}\lambda \Lambda_{u,v}(d\lambda),
$$
and, again, it is enough to let $\delta'\to 0.$

To prove (i), take $v=u\in {\mathbb H}$ such that $\Lambda_{u,u}([\alpha,\beta])>0.$ By (iii), we have 
$$
\mu^{(n)}\Bigl\langle P^{(n)}u,u\Bigr\rangle=
\Bigl\langle P^{(n)}\Sigma^{(n)}P^{(n)}u,u\Bigr\rangle
\to \int_{\alpha}^{\beta} \lambda \Lambda_{u,v}(d\lambda)
$$
and, by (ii), 
$$
\langle P^{(n)}u,u\rangle \to \Lambda_{u,u}([\alpha,\beta])>0.
$$
This implies that 
$$
\mu^{(n)}\to \mu :=\frac{\int_{\alpha}^{\beta}\lambda \Lambda_{u,u}(d\lambda)}{\Lambda_{u,u}([\alpha,\beta])}
$$
that clearly belongs to $[\alpha,\beta]$ (and does not depend 
on the choice of $u$). 

Finally, we prove (iv). 
To this end, note that, for all $\delta'<\delta,$
$$
\Bigl\langle C^{(n)}\Sigma^{(n)}C^{(n)}u,v\Bigr\rangle 
=\int_{{\mathbb R}_+\setminus (\alpha-\delta',\beta+\delta')}
\frac{\lambda}{(\mu^{(n)}-\lambda)^2}\Lambda^{(n)}_{u,v}(d\lambda). 
$$
Due to bilinearity, it will be enough to consider the case when $v=u.$
Let $\delta'<\delta$ and suppose that $\alpha-\delta',\beta+\delta'$
are not atoms of $\Lambda_{u,u}.$ Since $\mu^{(n)}\to \mu$ and 
Assumption \ref{separate_spectrum} holds, 
$$
\frac{\lambda}{(\mu^{(n)}-\lambda)^2}\to \frac{\lambda}{(\mu-\lambda)^2}
\ {\rm as}\ n\to\infty
$$
uniformly in ${\mathbb R}_+\setminus (\alpha-\delta',\beta+\delta').$
Due to the weak convergence of $\Lambda^{(n)}_{u,u}$ to $\Lambda_{u,u},$
it is easy to show that  
$$
\Bigl\langle C^{(n)}\Sigma^{(n)}C^{(n)}u,u\Bigr\rangle=
\int_{{\mathbb R}_+\setminus (\alpha-\delta',\beta+\delta')}
\frac{\lambda}{(\mu^{(n)}-\lambda)^2}\Lambda^{(n)}_{u,u}(d\lambda)
$$
$$
\to 
\int_{{\mathbb R}_+\setminus (\alpha-\delta',\beta+\delta')}
\frac{\lambda}{(\mu-\lambda)^2}\Lambda_{u,u}(d\lambda),
$$
and it remains to let $\delta'\to 0.$
\qed
\end{proof}

Observe that  
\begin{equation}
\label{sumiid}
n^{1/2}\Bigl\langle L^{(n)}(E^{(n)})u,v\Bigr\rangle = n^{-1/2}\sum_{j=1}^n\Bigl(\xi_j^{(n)}(u,v)+\xi_j^{(n)}(v,u)\Bigr),
\end{equation}
where 
$
\xi_j^{(n)}(u,v):=
\Bigl\langle X_j^{(n)},P^{(n)} v\Bigr\rangle \Bigl\langle X_j^{(n)}, C^{(n)}u\Bigr\rangle
$
are independent copies of random variable 
$
\xi^{(n)}(u,v):=
\Bigl\langle X^{(n)},P^{(n)} v\Bigr\rangle 
\Bigl\langle X^{(n)} C^{(n)}u\Bigr\rangle.
$
Recall also that Gaussian random variables $\Bigl\langle X^{(n)},P^{(n)} v\Bigr\rangle,$ $\Bigl\langle X^{(n)}, C^{(n)}u\Bigr\rangle$ are uncorrelated 
and, hence, independent. Therefore,
$\xi^{(n)}(u,v)$ is mean zero and, by Lemma \ref{cov_lim}, for 
all $u,v,u',v'\in {\mathbb H},$ 
$$
{\mathbb E}\xi^{(n)}(u,v)\xi^{(n)}(u',v')= 
\langle P^{(n)}\Sigma^{(n)}P^{n}v,v'\rangle 
\langle C^{(n)}\Sigma^{(n)}C^{n}u,u'\rangle 
\to 
\bar \Gamma(u,v;u',v'):=\Gamma_1(v,v')\Gamma_2(u,u'),
$$
which implies 
\begin{align}
\nonumber
&{\mathbb E}(\xi^{(n)}(u,v)+\xi^{(n)}(v,u))(\xi^{(n)}(u',v')+\xi^{(n)}(v',u'))\to 
\Gamma (u,v;u'v').
\end{align}

\begin{lemma}
\label{linear_clt}
Under the assumptions \ref{separate_spectrum} and \ref{weak_spectral_measure}, 
the sequence of finite dimensional distributions of 
$$
n^{1/2}\Bigl\langle L^{(n)}(E^{(n)})u,v\Bigr\rangle, u,v\in {\mathbb H}
$$
converges weakly as $n\to\infty$ to the finite dimensional 
distributions of the centered Gaussian process $Y(u,v), u,v\in {\mathbb H}$
with covariance function $\Gamma.$
\end{lemma}

\begin{proof}
In view of (\ref{sumiid}),
it is enough to show the convergence of finite dimensional distributions 
of the process 
$
n^{-1/2}\sum_{j=1}^n \xi_j^{(n)}(u,v), u,v\in {\mathbb H}
$
to the finite dimensional distributions of the centered Gaussian 
process $\bar Y(u,v), u,v\in {\mathbb H}$ with covariance function 
$\bar \Gamma.$ 
To this end, one has to check the Lindeberg 
condition, which reduces to 
$$
\frac{{\mathbb E}|\xi^{(n)}(u,v)|^2 I\Bigl(|\xi^{(n)}(u,v)|\geq \tau \sqrt{n} {\mathbb E}^{1/2}|\xi^{(n)}(u,v)|^2\Bigr)}{{\mathbb E}|\xi^{(n)}(u,v)|^2}
\to 0\ {\rm as}\ n\to\infty
$$ 
for all $\tau>0.$ 
Note that
$$
\frac{{\mathbb E}|\xi^{(n)}(u,v)|^2 I\Bigl(|\xi^{(n)}(u,v)|\geq \tau \sqrt{n} {\mathbb E}^{1/2}|\xi^{(n)}(u,v)|^2\Bigr)}{{\mathbb E}|\xi^{(n)}(u,v)|^2}
\leq 
\frac{{\mathbb E}|\xi^{(n)}(u,v)|^4}{\tau^2 n \Bigl({\mathbb E}|\xi^{(n)}(u,v)|^2\Bigr)^2}.
$$
Since 
$$
{\mathbb E}|\xi^{(n)}(u,v)|^2= 
\Bigl\langle P^{(n)}\Sigma^{(n)}P^{(n)}v,v\Bigr\rangle
\Bigl\langle C^{(n)}\Sigma^{(n)}C^{(n)}u,u\Bigr\rangle
$$
and 
$$
{\mathbb E}|\xi^{(n)}(u,v)|^4=
{\mathbb E}\Bigl\langle X^{(n)},P^{(n)} v\Bigr\rangle^4 
{\mathbb E}\Bigl\langle X^{(n)}, C^{(n)}u\Bigr\rangle^4
=
9\Bigl\langle P^{(n)}\Sigma^{(n)}P^{(n)}v,v\Bigr\rangle^2
\Bigl\langle C^{(n)}\Sigma^{(n)}C^{(n)}u,u\Bigr\rangle^2
$$
(where we used the fact that, for a centered normal random variable $g,$
${\mathbb E}g^4=3({\mathbb E}g^2)^2$), we get 
$$
\limsup_{n\to\infty}\frac{{\mathbb E}|\xi^{(n)}(u,v)|^4}{\tau^2 n\Bigl({\mathbb E}|\xi^{(n)}(u,v)|^2\Bigr)^2} 
= \frac{9 \Bigl\langle P^{(n)}\Sigma^{(n)}P^{(n)}v,v\Bigr\rangle^2
\Bigl\langle C^{(n)}\Sigma^{(n)}C^{(n)}u,u\Bigr\rangle^2}
{\Bigl\langle P^{(n)}\Sigma^{(n)}P^{(n)}v,v\Bigr\rangle^2
\Bigl\langle C^{(n)}\Sigma^{(n)}C^{(n)}u,u\Bigr\rangle^2}
\lim_{n\to\infty}\frac{1}{\tau^2 n}=0,
$$
and the result follows.
\qed
\end{proof}

To complete the proof of Theorem \ref{asymptotic_normality_1}, it is enough to 
use representation (\ref{P^{(n)}})
and bound (\ref{remaind}) of Theorem \ref{technical_1}. 
Since ${\bf r}(\Sigma^{(n)})=o(n),$ it follows from bound (\ref{remaind}) that 
$$
\langle R^{(n)}u,v\rangle = o_{\mathbb P}(n^{-1/2}),
$$
and the result follows from Lemma \ref{linear_clt}. \qed
\end{proof}

\begin{remark}
Under the assumption 
\begin{equation}
\label{assume_rank_A}
{\bf r}(\Sigma^{(n)})=o(n^{1/2})\ {\rm as}\ n\to\infty,
\end{equation}
the finite dimensional distributions of stochastic processes
$$
n^{1/2}\Bigl\langle (\hat P^{(n)}-P^{(n)})u,v\Bigr\rangle,\ u,v\in {\mathbb H} 
$$
converge weakly as $n\to \infty$ 
to the finite dimensional distributions of $Y.$
Indeed, by Theorem \ref{bd_bias} and bound (\ref{W_r}),
$$
\|{\mathbb E}\hat P^{(n)}-P^{(n)}\|_{\infty}=O\biggl(\frac{{\bf r}(\Sigma^{(n)})}{n}\biggr)=o(n^{-1/2}),
$$
and the claim follows from Theorem \ref{asymptotic_normality_1}.
\end{remark}

\section{Asymptotics and Concentration Bounds for Linear Forms of Empirical Eigenvectors Corresponding to a Simple Eigenvalue}\label{Sec:VarSel}

We will discuss special versions of some of the results of the previous sections 
in the case of spectral projectors corresonding to an isolated simple eigenvalue.
In this case, it becomes natural to state the results in terms of eigenvectors rather than spectral projectors. 

Suppose $\mu_r$ is a simple eigenvalue of $\Sigma,$ that is, $\mu_r$ is of multiplicity $m_r=1$ so that 
the spectral projector $P_r$ is of rank $1:$ $P_r=\theta_r\otimes \theta_r,$ where $\theta_r$ is a unit eigenvector corresponding to $\mu_r.$ Under the assumptions of Theorem \ref{bd_bias},
$$
{\mathbb E}\hat P_r=P_r+P_rW_rP_r + T_r,
$$ 
where the remainder $T_r$ satisfies bound (\ref{T_r}):
\begin{equation}
\label{T_r'''}
\|T_r\|_{\infty}\leq D_{\gamma}\frac{\|\Sigma\|_{\infty}^2}{\bar g_r^2}
\sqrt{\frac{{\bf r}(\Sigma)}{n}}\frac{1}{\sqrt{n}}.
\end{equation}
Note that 
$$
\langle P_r W_r P_r u,v\rangle = \langle P_r W_r \theta_r,v\rangle \langle \theta_r,u\rangle= \langle W_r\theta_r,\theta_r\rangle \langle \theta_r,u\rangle 
\langle \theta_r,v\rangle. 
$$
Therefore, $P_r W_r P_r = b_r P_r$
and 
\begin{equation}
\label{Ehat}
{\mathbb E}\hat P_r = (1+b_r)P_r + T_r,
\end{equation}
where $b_r:=\langle W_r \theta_r,\theta_r\rangle$ is a real number
characterizing the bias of $\hat P_r.$ 
It follows from (\ref{Ehat}) that 
$$
{\mathbb E}\langle \hat P_r \theta_r,\theta_r \rangle = 1+b_r+ 
\langle T_{r}\theta_r,\theta_r\rangle.
$$
Since $0\leq \langle \hat P_r \theta_r,\theta_r \rangle\leq 1,$ this implies 
that 
$$
-1-\|T_r\|_{\infty}\leq b_r \leq \|T_r\|_{\infty}.
$$
Under natural assumptions, $\|T_r\|_{\infty}=O(n^{-1/2}),$ so, we have that 
$b_r$ is between $-1+O(n^{-1/2})$ and $O(n^{-1/2}).$ In what follows, we will often 
assume that $b_r$ is bounded away from $-1$ which would ensure that the bias is 
not too large. In fact, it follows from bound (\ref{W_r}) that, under the assumption that ${\bf r}(\Sigma)\lesssim n,$ 
\begin{equation}
\label{bdbr}
|b_r|\lesssim \frac{\|\Sigma\|_{\infty}^2}{\bar g_r^2}\frac{{\bf r}(\Sigma)}{n},
\end{equation}
so, $b_r$ is small provided that $\frac{\|\Sigma\|_{\infty}}{\bar g_r}$ remains 
bounded and ${\bf r}(\Sigma)=o(n).$

In what follows, assume that $\hat P_r =\hat \theta_r\otimes \hat \theta_r$ 
and the sign of $\hat \theta_r$ is chosen in such a way 
that $\langle \hat \theta_r,\theta_r \rangle\geq 0.$
Since the eigenvectors $\hat \theta_r,\theta_r$ are defined only up to their signs, there is no loss of generality in such an assumption.

\begin{theorem}
\label{th:hattheta}
Let $t\geq 1$ and $\gamma\in (0,1/2).$
There exists a constant $C_{\gamma}>0$ such that, if 
$$
{\mathbb E}\|\hat \Sigma-\Sigma\|_{\infty}\leq \frac{(1-2\gamma)\bar g_r}{2},\ \  
1+b_r\geq 2\gamma.
$$ 
and
$$
C_{\gamma}\|\Sigma\|_{\infty}\biggl(\sqrt{\frac{t}{n}}\bigvee \frac{t}{n} \biggr)
\leq \frac{\gamma \bar g_r}{2},
$$
then with probability at least 
$1-e^{-t}$
$$
\Bigl|\Bigl\langle \hat \theta_r-\sqrt{1+b_r}\theta_r,u\Bigr\rangle\Bigr|
\leq C_{\gamma} \frac{\|\Sigma\|_{\infty}}{\bar g_r}
\sqrt{\frac{t}{n}}\|u\|.
$$  
\end{theorem}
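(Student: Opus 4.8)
The plan is to derive the bound from the concentration inequality for bilinear forms of $\hat P_r$ in Corollary~\ref{concent_bilin} and the bias representation in Theorem~\ref{bd_bias}, exploiting the rank-one structure $\hat P_r=\hat\theta_r\otimes\hat\theta_r$ and $P_r=\theta_r\otimes\theta_r$. Throughout I would write $\tau:=\frac{\|\Sigma\|_\infty}{\bar g_r}\sqrt{t/n}$ and $\rho:=\frac{\|\Sigma\|_\infty}{\bar g_r}\sqrt{{\bf r}(\Sigma)/n}$. The hypothesis ${\mathbb E}\|\hat\Sigma-\Sigma\|_\infty\le\frac{(1-2\gamma)\bar g_r}{2}$ together with Theorem~\ref{th_operator} gives ${\bf r}(\Sigma)\lesssim n$ and hence $\rho\lesssim 1$; the hypothesis $C_\gamma\|\Sigma\|_\infty(\sqrt{t/n}\vee t/n)\le\frac{\gamma\bar g_r}{2}$ gives $t\lesssim_\gamma n$ and $\tau\le\gamma/(2C_\gamma)$, so $\tau$ is as small as we wish once $C_\gamma$ is large. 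Since $\bar g_r\le\mu_r\le\|\Sigma\|_\infty$ we also have $\|\Sigma\|_\infty/\bar g_r\ge 1$, and by (\ref{W_r}), $|b_r|\le\|W_r\|_\infty\lesssim\rho^2\lesssim 1$, so $\sqrt{1+b_r}$ is bounded above.

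From $(\hat\theta_r\otimes\hat\theta_r)x=\langle\hat\theta_r,x\rangle\hat\theta_r$ one gets, for every $u\in{\mathbb H}$, the identities $\langle\hat P_r u,\theta_r\rangle=\langle\hat\theta_r,u\rangle\langle\hat\theta_r,\theta_r\rangle$ and $\langle\hat P_r\theta_r,\theta_r\rangle=\langle\hat\theta_r,\theta_r\rangle^2$. Setting $R_r':=\hat P_r-{\mathbb E}\hat P_r$ and invoking Theorem~\ref{bd_bias} (applicable since $m_r=1$), which reads ${\mathbb E}\hat P_r=(1+b_r)P_r+T_r$ with $\|T_r\|_\infty\le D_\gamma\frac{\|\Sigma\|_\infty^2}{\bar g_r^2}\sqrt{{\bf r}(\Sigma)/n}\,n^{-1/2}$, and using $P_r\theta_r=\theta_r$, these identities become
$$
\langle\hat\theta_r,u\rangle\,\langle\hat\theta_r,\theta_r\rangle=(1+b_r)\langle\theta_r,u\rangle+\langle T_r u,\theta_r\rangle+\langle R_r'u,\theta_r\rangle,
$$
$$
\langle\hat\theta_r,\theta_r\rangle^2=(1+b_r)+\langle T_r\theta_r,\theta_r\rangle+\langle R_r'\theta_r,\theta_r\rangle.
$$
The next step is to check that every remainder term appearing here is $\lesssim_\gamma\tau\|u\|$ on a single event of probability at least $1-e^{-t}$. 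For the bias remainder, $\|T_r\|_\infty=\rho\cdot\frac{\|\Sigma\|_\infty}{\bar g_r}n^{-1/2}\le\rho\tau\lesssim_\gamma\tau$. For $R_r'$, apply Corollary~\ref{concent_bilin} to the pairs $(u,\theta_r)$ and $(\theta_r,\theta_r)$ (in the latter case Remark~\ref{rema_Cor1} allows the first term of (\ref{bdPr}) to be dropped); in each of the three cases for the maximum in (\ref{bdPr}) the quantity $\frac{\|\Sigma\|_\infty^2}{\bar g_r^2}\bigl(\sqrt{{\bf r}(\Sigma)/n}\vee\sqrt{t/n}\vee t/n\bigr)\sqrt{t/n}$ equals $\rho\tau$, $\tau^2$, or $\tau^2\sqrt{t/n}$, and since $\rho\lesssim 1$, $\tau\le\gamma/(2C_\gamma)$ and $\sqrt{t/n}\lesssim_\gamma 1$, each is $\lesssim_\gamma\tau$. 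Hence on an event of probability at least $1-2e^{-t}$ (which becomes $1-e^{-t}$ after adjusting constants) we have $|\langle R_r'u,\theta_r\rangle|\lesssim_\gamma\tau\|u\|$ and $|\langle R_r'\theta_r,\theta_r\rangle|\lesssim_\gamma\tau$.

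On this event the second identity yields $\langle\hat\theta_r,\theta_r\rangle^2\ge(1+b_r)-c_\gamma\tau\ge 2\gamma-\gamma=\gamma$, once $C_\gamma$ is chosen large enough that $c_\gamma\tau\le\gamma$; by the sign convention $\langle\hat\theta_r,\theta_r\rangle\ge 0$ this gives $\langle\hat\theta_r,\theta_r\rangle\ge\sqrt\gamma$, so the denominators below are bounded away from zero. Then
$$
\bigl|\langle\hat\theta_r,\theta_r\rangle-\sqrt{1+b_r}\bigr|=\frac{\bigl|\langle\hat\theta_r,\theta_r\rangle^2-(1+b_r)\bigr|}{\langle\hat\theta_r,\theta_r\rangle+\sqrt{1+b_r}}\lesssim_\gamma\tau ,
$$
and subtracting $\sqrt{1+b_r}\,\langle\hat\theta_r,\theta_r\rangle\langle\theta_r,u\rangle$ from both sides of the first identity and dividing by $\langle\hat\theta_r,\theta_r\rangle$,
$$
\langle\hat\theta_r-\sqrt{1+b_r}\,\theta_r,u\rangle=\frac{\sqrt{1+b_r}\,\bigl(\sqrt{1+b_r}-\langle\hat\theta_r,\theta_r\rangle\bigr)\langle\theta_r,u\rangle+\langle T_r u,\theta_r\rangle+\langle R_r'u,\theta_r\rangle}{\langle\hat\theta_r,\theta_r\rangle}.
$$
The numerator is $\lesssim_\gamma\tau\|u\|$ (using $|\langle\theta_r,u\rangle|\le\|u\|$ together with the bounds already established) and the denominator is $\ge\sqrt\gamma$, which gives $|\langle\hat\theta_r-\sqrt{1+b_r}\,\theta_r,u\rangle|\lesssim_\gamma\tau\|u\|$, i.e.\ the asserted inequality. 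The two points needing care are the bookkeeping showing that every higher-order term is dominated by $\tau=\frac{\|\Sigma\|_\infty}{\bar g_r}\sqrt{t/n}$ (where the hypothesis forcing $\tau$ small is essential) and the lower bound $\langle\hat\theta_r,\theta_r\rangle\gtrsim_\gamma 1$ needed to justify the divisions (where the hypothesis $1+b_r\ge 2\gamma$ enters); neither constitutes a genuine obstacle given the machinery of the previous sections.
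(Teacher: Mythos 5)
Your proof is correct and follows essentially the same route as the paper: you reduce the linear form of $\hat\theta_r$ to bilinear forms of $\hat P_r$, split $\hat P_r=(1+b_r)P_r+T_r+(\hat P_r-\E\hat P_r)$ via Theorem~\ref{bd_bias}, invoke Corollary~\ref{concent_bilin}, and use the hypotheses to keep $\langle\hat\theta_r,\theta_r\rangle$ bounded away from zero. The only cosmetic difference is that you derive the two scalar identities directly from the rank-one structure of $\hat P_r$ and $P_r$, whereas the paper packages the same computation into Lemma~\ref{lem:single_eig} (note $\sqrt{1+\langle(\hat P_r-P_r)\theta_r,\theta_r\rangle}=\langle\hat\theta_r,\theta_r\rangle$) and into the single remainder $\rho_r(\cdot)$ rather than keeping $T_r$ and $\hat P_r-\E\hat P_r$ separate.
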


\begin{rema}
It is easy to see that the assumptions of the theorem hold provided that 
$\frac{\|\Sigma\|_{\infty}}{\bar g_r}$ is bounded by a constant and $n$
is sufficiently large so that $n\gtrsim ({\bf r}(\Sigma)\vee t).$ 
\end{rema}

\begin{proof}
We need the following lemma 
that provides a representation of the linear functional 
$\langle \hat \theta_r-\theta_r, u\rangle$ in terms of bilinear form of operator $\hat P_r-P_r.$

\begin{lemma}\label{lem:single_eig}
For all $u\in {\mathbb H},$ 
\begin{equation}
\label{simeig}
\langle \hat \theta_r-\theta_r,u\rangle
=\frac{\langle (\hat P_r-P_r)\theta_r,u\rangle-
\Bigl(\sqrt{1+\langle (\hat P_r-P_r)\theta_r,\theta_r\rangle}-1\Bigr)\langle \theta_r, u\rangle}{\sqrt{1+\langle (\hat P_r-P_r)\theta_r,\theta_r\rangle}} 
\end{equation}
\end{lemma}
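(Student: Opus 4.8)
The plan is to reduce everything to a one-line algebraic identity in the single scalar $c:=\langle\hat\theta_r,\theta_r\rangle$, which is nonnegative by the chosen sign convention on $\hat\theta_r$. Since $\hat P_r=\hat\theta_r\otimes\hat\theta_r$ and $P_r=\theta_r\otimes\theta_r$, applying $\hat P_r-P_r$ to $\theta_r$ gives
$$
(\hat P_r-P_r)\theta_r=\langle\hat\theta_r,\theta_r\rangle\,\hat\theta_r-\theta_r=c\,\hat\theta_r-\theta_r .
$$
First I would use this to compute the quantity under the square root: pairing with $\theta_r$ and using $\|\theta_r\|=1$ yields $\langle(\hat P_r-P_r)\theta_r,\theta_r\rangle=c^2-1$, hence $1+\langle(\hat P_r-P_r)\theta_r,\theta_r\rangle=c^2\geq 0$, and therefore $\sqrt{1+\langle(\hat P_r-P_r)\theta_r,\theta_r\rangle}=|c|=c$, where the last equality is exactly where the sign convention $\langle\hat\theta_r,\theta_r\rangle\geq 0$ enters. (Implicitly $c>0$ here, which is the regime in which $\hat P_r$ is a meaningful estimator of $P_r$; in the degenerate case $c=0$ the statement is vacuous and can be excluded.)

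Next I would expand the numerator on the right-hand side of \eqref{simeig}. Pairing the displayed formula for $(\hat P_r-P_r)\theta_r$ with $u$ gives $\langle(\hat P_r-P_r)\theta_r,u\rangle=c\langle\hat\theta_r,u\rangle-\langle\theta_r,u\rangle$, while $\sqrt{1+\langle(\hat P_r-P_r)\theta_r,\theta_r\rangle}-1=c-1$. Substituting both into the numerator of \eqref{simeig} and cancelling the $-\langle\theta_r,u\rangle$ against $+\langle\theta_r,u\rangle$ leaves
$$
c\langle\hat\theta_r,u\rangle-c\langle\theta_r,u\rangle=c\,\langle\hat\theta_r-\theta_r,u\rangle .
$$
Dividing by $\sqrt{1+\langle(\hat P_r-P_r)\theta_r,\theta_r\rangle}=c$ then produces exactly $\langle\hat\theta_r-\theta_r,u\rangle$, which is the claimed identity \eqref{simeig}.

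The argument is entirely elementary; there is no analytic obstacle. The only point requiring care is the bookkeeping of the sign: one must invoke $\langle\hat\theta_r,\theta_r\rangle\ge 0$ to identify $\sqrt{c^2}$ with $c$ (and not $-c$), since otherwise the roles of $\hat\theta_r$ and $-\hat\theta_r$ would swap and the identity would fail. This is precisely why the normalization $\langle\hat\theta_r,\theta_r\rangle\geq 0$ was imposed just before the statement, and it is harmless because $\hat\theta_r$ is only defined up to sign.
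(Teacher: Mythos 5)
Your proof is correct, and it is cleaner than the paper's route. The paper starts from the expansion $(\hat P_r-P_r)\theta_r = (\hat\theta_r-\theta_r) + \langle\hat\theta_r-\theta_r,\theta_r\rangle\theta_r + \langle\hat\theta_r-\theta_r,\theta_r\rangle(\hat\theta_r-\theta_r)$, rearranges it into an expression for $\langle\hat\theta_r-\theta_r,u\rangle$ containing the unknown scalar $\langle\hat\theta_r-\theta_r,\theta_r\rangle$, specializes to $u=\theta_r$ to get a quadratic in that scalar, solves it (using the sign convention to pick the root), and then substitutes back. You instead observe that $(\hat P_r-P_r)\theta_r = c\hat\theta_r-\theta_r$ with $c=\langle\hat\theta_r,\theta_r\rangle\geq 0$, note that $1+\langle(\hat P_r-P_r)\theta_r,\theta_r\rangle=c^2$, and simply verify that the right-hand side of \eqref{simeig} collapses to $\langle\hat\theta_r-\theta_r,u\rangle$ by direct substitution. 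The underlying algebra is identical and both proofs hinge on $\sqrt{c^2}=c$, but your reformulation as a one-step verification avoids the implicit equation and the quadratic altogether, which is a modest but genuine simplification. The one caveat you flag (degenerate case $c=0$) is the same one the paper silently ignores; both arguments require $c>0$ to divide, which is guaranteed in the regime where $\hat P_r$ is close to $P_r$.
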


\begin{proof}
The following representation is obvious
$$
(\hat P_r-P_r)\theta_r = \hat \theta_r-\theta_r +
\langle \hat \theta_r-\theta_r, \theta_r \rangle \theta_r
+  
\langle \hat \theta_r-\theta_r, \theta_r \rangle (\hat \theta_r-\theta_r)
$$
and it implies that 
\begin{equation}
\label{hattheta_dot_u}
\langle \hat \theta_r-\theta_r,u\rangle= 
\frac{\langle (\hat P_r-P_r)\theta_r,u\rangle-\langle \hat \theta_r-\theta_r, \theta_r \rangle \langle \theta_r, u\rangle}{1+\langle \hat \theta_r-\theta_r, \theta_r\rangle}.
\end{equation}
For $u=\theta_r,$ it yields 
$$
\langle \hat \theta_r-\theta_r, \theta_r\rangle^2 + 2\langle \hat \theta_r-\theta_r, \theta_r \rangle
= \langle (\hat P_r-P_r)\theta_r,\theta_r\rangle
$$
and, since $\langle \hat \theta_r, \theta_r\rangle\geq  0,$ we easily get that
$$
\langle \hat \theta_r, \theta_r\rangle= \sqrt{1+\langle (\hat P_r-P_r)\theta_r,\theta_r\rangle}.
$$
Substituting this into (\ref{hattheta_dot_u}) gives the result. 
\qed
\end{proof}

Denote 
$$
\rho_r(u):=\langle (\hat P_r-(1+b_r)P_r)\theta_r,u\rangle.
$$
We can rewrite (\ref{simeig}) as follows:
\begin{align*}
&
\nonumber
\langle \hat \theta_r-\theta_r,u\rangle =
\frac{b_r\langle \theta_r,u\rangle + \rho_r(u)-\Bigl(\sqrt{1+b_r+\rho_r(\theta_r)}-1\Bigr)\langle \theta_r,u\rangle}
{\sqrt{1+b_r+\rho_r(\theta_r)}}
\\
&
\nonumber
=\biggl(\frac{1+b_r}{\sqrt{1+b_r+\rho_r(\theta_r)}}-1\biggr)\langle \theta_r,u\rangle
+\frac{\rho_r(u)}{\sqrt{1+b_r+\rho_r(\theta_r)}}
\\
&
\nonumber
=\Bigl(\sqrt{1+b_r}-1\Bigr)\langle\theta_r,u\rangle + 
\biggl(\frac{1+b_r}{\sqrt{1+b_r+\rho_r(\theta_r)}}-\sqrt{1+b_r}\biggr)\langle \theta_r,u\rangle + \frac{\rho_r(u)}{\sqrt{1+b_r+\rho_r(\theta_r)}}
\\
&
\nonumber
=\Bigl(\sqrt{1+b_r}-1\Bigr)\langle\theta_r,u\rangle 
+ 
\frac{\rho_r(u)}{\sqrt{1+b_r+\rho_r(\theta_r)}}
\\
&
- 
\frac{\sqrt{1+b_r}}{\sqrt{1+b_r+\rho_r(\theta_r)}\Bigl(\sqrt{1+b_r+\rho_r(\theta_r)}+\sqrt{1+b_r}\Bigr)}
\rho_r(\theta_r)\langle \theta_r,u\rangle,
\end{align*}
which implies 
\begin{align}
\label{glav}
&
\nonumber
\Bigl\langle \hat \theta_r-\sqrt{1+b_r}\theta_r,u\Bigr\rangle =
\frac{\rho_r(u)}{\sqrt{1+b_r+\rho_r(\theta_r)}}
\\
&
- 
\frac{\sqrt{1+b_r}}{\sqrt{1+b_r+\rho_r(\theta_r)}\Bigl(\sqrt{1+b_r+\rho_r(\theta_r)}+\sqrt{1+b_r}\Bigr)}
\rho_r(\theta_r)\langle \theta_r,u\rangle.
\end{align}

%

The next bound on $\rho_r(u)$ follows from 
Corollary \ref{concent_bilin} and from Theorem \ref{bd_bias},
and it holds with probability at least $1-e^{-t}:$ 
\begin{align*}
&
\rho_r(u)
\leq D \frac{\|\Sigma\|_{\infty}}{\bar g_r}
\sqrt{\frac{t}{n}}\|u\|
+
D_\gamma \frac{\|\Sigma\|_{\infty}^2}{\bar g_r^2}
\biggl(\sqrt{\frac{{\bf r}(\Sigma)}{n}}\bigvee \sqrt{\frac{t}{n}}\bigvee \frac{t}{n}
\biggr)
\sqrt{\frac{t}{n}}\|u\|.
\end{align*}
The second term in the right hand side of the above bound can be dropped
since, for some constant $C_1>0,$ 
$$
{C_1}^{-1}\|\Sigma\|_{\infty}\biggl(\sqrt{\frac{{\bf r}(\Sigma)}{n}}\bigvee \sqrt{\frac{t}{n}}\bigvee\frac{t}{n} \biggr)<\frac{\bar g_r}{2},
$$ 
which implies the bound
\begin{equation}
\label{rhobd}
|\rho_r(u)|\leq (D+C_1D_{\gamma})
\frac{\|\Sigma\|_{\infty}}{\bar g_r}
\sqrt{\frac{t}{n}}\|u\|.
\end{equation}

Assume that $C_{\gamma}\geq D+C_1D_{\gamma}.$
Taking into account that, under the conditions of the theorem on an event of probability at least $1-e^{-t},$ 
$$
|\rho_r(\theta_r)|\leq 
(D+C_1D_{\gamma})\frac{\|\Sigma\|_{\infty}}{\bar g_r}
\sqrt{\frac{t}{n}}
\leq 
C_{\gamma}\frac{\|\Sigma\|_{\infty}}{\bar g_r}
\sqrt{\frac{t}{n}}
\leq \gamma/2,
$$
we get that 
$$
1+b_r + \rho_r(\theta_r)\geq \gamma.
$$
In view of bound (\ref{bdbr}) and Theorem \ref{th_operator}, it is also easy to see that, for some constant $C_1>0,$ 
\begin{equation}
\label{brbd}
|b_r|\leq C_1 \biggl(\frac{{\mathbb E}\|\hat \Sigma_n-\Sigma\|_{\infty}}{\bar g_r}\biggr)^2 \leq C_1 \biggl(\frac{1-2\gamma}{2}\biggr)^2 \leq C_1.
\end{equation}
Therefore, it follows from (\ref{glav}) and (\ref{rhobd}) that, for some $D_{\gamma'}>0,$ with probability at least $1-2e^{-t}$
$$
\Bigl|\Bigl\langle \hat \theta_r-\sqrt{1+b_r}\theta_r,u\Bigr\rangle\Bigr|
\leq D_{\gamma}' \frac{\|\Sigma\|_{\infty}}{\bar g_r}
\sqrt{\frac{t}{n}}\|u\|.
$$  
To complete the proof,
it is enough to take $C_{\gamma}= \max(C, D+C_1D_{\gamma}, D_{\gamma}')$
and also to adjust the constant properly (to write the probability bound as 
$1-e^{-t}$). 
\qed
\end{proof}

\begin{remark}
\label{rema_rhor}
In view of Remark \ref{rema_Cor1} of Section \ref{Sec:Representation}, the bound on $\rho_r(\theta_r)$ that appeared 
in the above proof could be improved as follows: with probability at least $1-e^{-t},$
$$
|\rho_r(\theta_r)|\leq 
D_\gamma \frac{\|\Sigma\|_{\infty}^2}{\bar g_r^2}
\biggl(\sqrt{\frac{{\bf r}(\Sigma)}{n}}\bigvee \sqrt{\frac{t}{n}} \bigvee \frac{t}{n}
\biggr)
\sqrt{\frac{t}{n}}.
$$ 
This implies that with the same probability
\begin{align}
\label{betterrho}
&
\Bigl|\Bigl\langle \hat \theta_r-\sqrt{1+b_r}\theta_r,\theta_r\Bigr\rangle\Bigr|
\leq 
C_\gamma \frac{\|\Sigma\|_{\infty}^2}{\bar g_r^2}
\biggl(\sqrt{\frac{{\bf r}(\Sigma)}{n}}\bigvee \sqrt{\frac{t}{n}}\bigvee \frac{t}{n}
\biggr)
\sqrt{\frac{t}{n}}.
\end{align}
\end{remark}

Based on Theorem \ref{th:hattheta}, it is easy to develop a simple $\sqrt{n}$-consistent estimator of the bias parameter $b_r$ and suggest 
an approach to bias reduction in the problem of estimation of linear 
functionals of eigenvectors of $\Sigma.$ Suppose, for simplicity, that
the sample size is an even number $2n$ and divide the sample $(X_1,\dots, X_{2n})$
into two subsamples of size $n$ each (the first $n$ observations and the rest).
Let $\hat \Sigma_n$ be the sample covariance based on the first subsample 
and $\hat \Sigma_n'$ be the sample covariance based on the second subsample.
For a simple eigenvalue $\mu_r$ with an eigenvector $\theta_r,$ denote  
by $\hat \theta_r$ the corresponding eigenvector of $\hat \Sigma_n$ and by $\hat \theta_r'$ the corresponding eigenvector of $\hat \Sigma_n'.$ Assume that their 
signs are chosen in such a way that $\langle\hat \theta_r, \hat \theta_r'\rangle\geq 0.$ Define 
$$
\hat b_r := \langle \hat \theta_r, \hat \theta_r'\rangle-1
$$
and 
$$
\tilde \theta_r := \frac{\hat \theta_r}{\sqrt{1+\hat b_r}}.
$$

\begin{proposition}\label{prop:tildetheta}
Under the assumptions and notations of Theorem \ref{th:hattheta}, for some 
constant $C_{\gamma}>0$ with probability at least $1-e^{-t},$
\begin{equation}
\label{estim_br}
|\hat b_r-b_r|
\leq 
C_\gamma \frac{\|\Sigma\|_{\infty}^2}{\bar g_r^2}
\biggl(\sqrt{\frac{{\bf r}(\Sigma)}{n}}\bigvee \sqrt{\frac{t}{n}}\bigvee \frac{t}{n}
\biggr)
\sqrt{\frac{t}{n}}.
\end{equation}
and, for all $u\in {\mathbb H},$ 
\begin{equation}
\label{tildetheta}
\Bigl|\langle \tilde \theta_r-\theta_r,u\rangle\Bigr| 
\leq C_{\gamma} \frac{\|\Sigma\|_{\infty}}{\bar g_r}
\sqrt{\frac{t}{n}}\|u\|.
\end{equation}
\end{proposition}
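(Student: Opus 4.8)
The plan is to reduce everything to the estimates already obtained for $\hat\theta_r$ (Theorem~\ref{th:hattheta} and Remark~\ref{rema_rhor}) applied to \emph{both} subsamples, plus a control on the cross term $\langle \hat\theta_r,\hat\theta_r'\rangle$. Write $P_r=\theta_r\otimes\theta_r$, and recall that on an event of probability at least $1-e^{-t}$ we have, from Theorem~\ref{th:hattheta}, the bound $|\langle \hat\theta_r-\sqrt{1+b_r}\theta_r,u\rangle|\lesssim_\gamma \frac{\|\Sigma\|_\infty}{\bar g_r}\sqrt{t/n}\,\|u\|$, and from \eqref{betterrho} the sharper bound $|\langle \hat\theta_r-\sqrt{1+b_r}\theta_r,\theta_r\rangle|\lesssim_\gamma \frac{\|\Sigma\|_\infty^2}{\bar g_r^2}\bigl(\sqrt{\mathbf r(\Sigma)/n}\vee\sqrt{t/n}\vee t/n\bigr)\sqrt{t/n}$. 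The same two bounds hold for $\hat\theta_r'$ (built from the independent second subsample) on an event of the same probability, since the two subsamples have the same distribution; intersecting the events costs only a constant adjustment in the exponent.

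First I would estimate $\hat b_r - b_r = \langle\hat\theta_r,\hat\theta_r'\rangle - (1+b_r)$. Write $\hat\theta_r = \sqrt{1+b_r}\,\theta_r + \Delta$ and $\hat\theta_r' = \sqrt{1+b_r}\,\theta_r + \Delta'$, where $\Delta,\Delta'$ are the deviation vectors controlled above. Then
$$
\langle\hat\theta_r,\hat\theta_r'\rangle = (1+b_r) + \sqrt{1+b_r}\,\bigl(\langle\theta_r,\Delta'\rangle+\langle\theta_r,\Delta\rangle\bigr) + \langle\Delta,\Delta'\rangle.
$$
The two terms $\langle\theta_r,\Delta\rangle=\langle\hat\theta_r-\sqrt{1+b_r}\theta_r,\theta_r\rangle$ and $\langle\theta_r,\Delta'\rangle$ are exactly the quantities bounded by the \emph{sharper} estimate \eqref{betterrho} — this is the crucial point, since the crude $\sqrt{t/n}$ bound would only give $\hat b_r-b_r = O_{\mathbb P}(n^{-1/2})$, whereas we want the faster rate. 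The cross term $\langle\Delta,\Delta'\rangle$ is bounded by $\|\Delta\|\,\|\Delta'\|$; using the operator-norm perturbation bound \eqref{bd_1} (or, if one wants a rate, the bound on $\|\hat P_r-P_r\|_\infty$ together with $\|\hat\theta_r-\theta_r\|^2 = 2(1-\langle\hat\theta_r,\theta_r\rangle)\le \|\hat P_r-P_r\|_\infty^2$), one sees $\|\Delta\|,\|\Delta'\| \lesssim_\gamma \frac{\|\Sigma\|_\infty}{\bar g_r}\bigl(\sqrt{\mathbf r(\Sigma)/n}\vee\sqrt{t/n}\bigr)$, so $\langle\Delta,\Delta'\rangle$ is of order $\frac{\|\Sigma\|_\infty^2}{\bar g_r^2}\bigl(\sqrt{\mathbf r(\Sigma)/n}\vee\sqrt{t/n}\vee t/n\bigr)^2$, which is dominated by the right-hand side of \eqref{estim_br} under the standing assumption $\mathbf r(\Sigma)\lesssim n$. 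Since $1+b_r$ is bounded above and below (by \eqref{brbd} and the hypothesis $1+b_r\ge 2\gamma$), $\sqrt{1+b_r}$ is a harmless bounded factor, and collecting terms yields \eqref{estim_br}.

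For \eqref{tildetheta}, write
$$
\tilde\theta_r - \theta_r = \frac{\hat\theta_r}{\sqrt{1+\hat b_r}} - \theta_r
= \frac{\hat\theta_r - \sqrt{1+b_r}\,\theta_r}{\sqrt{1+\hat b_r}}
+ \Bigl(\frac{\sqrt{1+b_r}}{\sqrt{1+\hat b_r}} - 1\Bigr)\theta_r.
$$
For the first term, pair with $u$ and apply Theorem~\ref{th:hattheta}; note that on our event $\hat b_r$ is within $\gamma$ of $b_r$ by \eqref{estim_br} (for $n$ large as quantified in the hypotheses), so $1+\hat b_r \ge \gamma$ and the denominator is bounded below, giving a contribution $\lesssim_\gamma \frac{\|\Sigma\|_\infty}{\bar g_r}\sqrt{t/n}\,\|u\|$. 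For the second term, $\bigl|\frac{\sqrt{1+b_r}}{\sqrt{1+\hat b_r}}-1\bigr| = \frac{|\sqrt{1+\hat b_r}-\sqrt{1+b_r}|}{\sqrt{1+\hat b_r}} \le \frac{|\hat b_r - b_r|}{\sqrt{1+\hat b_r}\,(\sqrt{1+\hat b_r}+\sqrt{1+b_r})} \lesssim_\gamma |\hat b_r-b_r|$, which by \eqref{estim_br} is of smaller order than $\frac{\|\Sigma\|_\infty}{\bar g_r}\sqrt{t/n}$ (again using $\mathbf r(\Sigma)\lesssim n$), so it is absorbed. Combining and renaming constants, and replacing $1-2e^{-t}$ by $1-e^{-t}$ by the usual constant adjustment, gives \eqref{tildetheta}.

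The main obstacle is the first step: one must be careful to invoke the \emph{sharper} bound \eqref{betterrho} of Remark~\ref{rema_rhor} for the inner products $\langle\theta_r,\Delta\rangle,\langle\theta_r,\Delta'\rangle$ rather than the generic $\sqrt{t/n}$ bound — without this, the rate claimed for $\hat b_r$ (faster than $n^{-1/2}$ when $\mathbf r(\Sigma)=o(n)$) is lost — and simultaneously to check that the cross term $\langle\Delta,\Delta'\rangle$, which is only quadratic in the crude deviation, is still dominated by the target rate. Everything else is routine Taylor expansion of $x\mapsto x^{-1/2}$ near $1$ together with the boundedness of $1+b_r$ and $1+\hat b_r$ away from zero on the relevant event.
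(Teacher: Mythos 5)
Your decomposition of $\hat b_r - b_r$ and your treatment of the two linear terms $\langle\theta_r,\Delta\rangle,\langle\theta_r,\Delta'\rangle$ via Remark~\ref{rema_rhor} match the paper exactly, as does your treatment of \eqref{tildetheta}. But there is a genuine gap in your bound on the cross term $\langle\Delta,\Delta'\rangle$.

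You bound $|\langle\Delta,\Delta'\rangle|\le\|\Delta\|\,\|\Delta'\|\lesssim_\gamma\frac{\|\Sigma\|_\infty^2}{\bar g_r^2}\bigl(\sqrt{\mathbf r(\Sigma)/n}\vee\sqrt{t/n}\bigr)^2$ and claim this is dominated by the target rate
$\frac{\|\Sigma\|_\infty^2}{\bar g_r^2}\bigl(\sqrt{\mathbf r(\Sigma)/n}\vee\sqrt{t/n}\vee t/n\bigr)\sqrt{t/n}$
``under the standing assumption $\mathbf r(\Sigma)\lesssim n$.'' That implication is false. When $\mathbf r(\Sigma)\gg t$ the first quantity is of order $\mathbf r(\Sigma)/n$ while the second is of order $\sqrt{\mathbf r(\Sigma)/n}\cdot\sqrt{t/n}$, and their ratio is $\sqrt{\mathbf r(\Sigma)/t}$, which is unbounded. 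In the regime that really matters for this proposition --- $\sqrt{n}\lesssim\mathbf r(\Sigma)\lesssim n$, where one wants the estimated bias to concentrate at a rate \emph{faster} than $n^{-1/2}$ --- the Cauchy--Schwarz bound on the cross term destroys exactly the improvement you were careful to preserve in the linear terms.

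The missing idea is to exploit the independence of the two subsamples. The paper conditions on the second subsample, so that $\Delta'=\hat\theta_r'-\sqrt{1+b_r}\theta_r$ becomes a \emph{fixed} vector, and then applies the linear-form bound of Theorem~\ref{th:hattheta} to $\Delta=\hat\theta_r-\sqrt{1+b_r}\theta_r$ with test vector $u=\Delta'$:
\begin{equation*}
\Bigl|\langle\Delta,\Delta'\rangle\Bigr|\leq C_\gamma\,\frac{\|\Sigma\|_\infty}{\bar g_r}\sqrt{\frac{t}{n}}\,\|\Delta'\|.
\end{equation*}
This trades one factor of $\|\Delta\|\sim\frac{\|\Sigma\|_\infty}{\bar g_r}\bigl(\sqrt{\mathbf r(\Sigma)/n}\vee\sqrt{t/n}\bigr)$ for the much smaller $\frac{\|\Sigma\|_\infty}{\bar g_r}\sqrt{t/n}$; combined with the norm bound on $\Delta'$ it gives precisely the target rate. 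Intuitively, the point is that $\Delta$ is a high-dimensional random vector whose norm is large (driven by $\mathbf r(\Sigma)$) but whose projection onto any \emph{fixed} direction is only $O(\sqrt{t/n})$; Cauchy--Schwarz throws this directional information away.

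(A minor side remark: you write $\|\hat\theta_r-\theta_r\|^2=2(1-\langle\hat\theta_r,\theta_r\rangle)\le\|\hat P_r-P_r\|_\infty^2$; the correct chain is $2(1-\langle\hat\theta_r,\theta_r\rangle)\le 2(1-\langle\hat\theta_r,\theta_r\rangle^2)=\|\hat P_r-P_r\|_2^2\le 2\|\hat P_r-P_r\|_\infty^2$, so a factor of $2$ is missing; this is harmless but worth fixing.)
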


\begin{proof}
It follows from the definition of $\hat b_r$ that
\begin{align}
\label{hatbrbr}
&
\nonumber
|\hat b_r-b_r|= \Bigl|\langle \hat \theta_r, \hat \theta_r'\rangle-(1+b_r)\Bigr|
= 
\\
&
\nonumber
\Bigl|\sqrt{1+b_r}\Bigl\langle \hat \theta_r -\sqrt{1+b_r} \theta_r, \theta_r\Bigr\rangle +
\sqrt{1+b_r}\Bigl\langle \hat \theta_r' -\sqrt{1+b_r}\theta_r, \theta_r\Bigr\rangle
\\
&
\nonumber
+
\Bigl\langle \hat \theta_r -\sqrt{1+b_r} \theta_r, \hat \theta_r' -\sqrt{1+b_r} \theta_r\Bigr\rangle 
\Bigr|
\\
&
\leq 
\Bigl|\sqrt{1+b_r}\Bigl\langle \hat \theta_r -\sqrt{1+b_r} \theta_r, \theta_r\Bigr\rangle\Bigr| +
\Bigl|\sqrt{1+b_r}\Bigl\langle \hat \theta_r' -\sqrt{1+b_r}\theta_r, \theta_r\Bigr\rangle
\Bigr|
\\
&
\nonumber
+
\Bigl|\Bigl\langle \hat \theta_r -\sqrt{1+b_r} \theta_r, \hat \theta_r' -\sqrt{1+b_r} \theta_r\Bigr\rangle\Bigr|.
\end{align}
By bound (\ref{betterrho}), with probability at least $1-e^{-t}$
$$
\Bigl|\Bigl\langle \hat \theta_r -\sqrt{1+b_r} \theta_r,  \theta_r\Bigr\rangle\Bigr|\leq 
C_\gamma \frac{\|\Sigma\|_{\infty}^2}{\bar g_r^2}
\biggl(\sqrt{\frac{{\bf r}(\Sigma)}{n}}\bigvee \sqrt{\frac{t}{n}}\bigvee \frac{t}{n}
\biggr)
\sqrt{\frac{t}{n}}
$$ 
and with the same probability
$$
\Bigl|\Bigl\langle \hat \theta_r' -\sqrt{1+b_r} \theta_r,  \theta_r\Bigr\rangle\Bigr|\leq 
C_\gamma \frac{\|\Sigma\|_{\infty}^2}{\bar g_r^2}
\biggl(\sqrt{\frac{{\bf r}(\Sigma)}{n}}\bigvee \sqrt{\frac{t}{n}}\bigvee \frac{t}{n}
\biggr)
\sqrt{\frac{t}{n}}.
$$ 
By Theorem \ref{th:hattheta}, conditionally on the second sample, 
with probability at least $1-e^{-t}$
\begin{equation}
\label{condit}
\Bigl|\Bigl\langle \hat \theta_r -\sqrt{1+b_r} \theta_r, 
\hat \theta_r'-\sqrt{1+b_r} \theta_r\Bigr\rangle\Bigr|\leq 
C_{\gamma} \frac{\|\Sigma\|_{\infty}}{\bar g_r}
\sqrt{\frac{t}{n}}
\Bigl\|\hat \theta_r'-\sqrt{1+b_r} \theta_r\Bigr\|.
\end{equation}
To bound the norm $\Bigl\|\hat \theta_r'-\sqrt{1+b_r} \theta_r\Bigr\|$ in the right hand side, note that
$$
\Bigl\|\hat \theta_r'-\sqrt{1+b_r} \theta_r\Bigr\|\leq 
\Bigl\|\hat \theta_r'-\theta_r\Bigr\|+ |\sqrt{1+b_r}-1|
\leq \sqrt{2}\|\hat P_r'-P_r\|_{\infty} + \frac{|b_r|}{1+\sqrt{1+b_r}},
$$
where $\hat P_r':=\hat \theta_r'\otimes \hat \theta_r'$ and 
we used the bound 
$$
\|\hat \theta_r'-\theta_r\|^2 = 2-2\langle \hat \theta_r,\theta_r\rangle
\leq 2-2\langle \hat \theta_r,\theta_r\rangle^2 = 2-2\langle \hat P_r',P_r\rangle
=\|\hat P_r'-P_r\|_2^2 \leq 2\|\hat P_r'-P_r\|_{\infty}^2. 
$$
Using bounds (\ref{bd_1}), (\ref{bdbr}) and Theorem \ref{spectrum_sharper},
it is easy to show that with probability at least $1-e^{-t}$
$$
\Bigl\|\hat \theta_r'-\sqrt{1+b_r} \theta_r\Bigr\|\lesssim_{\gamma}
\frac{\|\Sigma\|_{\infty}}{\bar g_r}\biggl(\sqrt{\frac{{\bf r}(\Sigma)}{n}}\bigvee \sqrt{\frac{t}{n}}\biggr).
$$
Together with (\ref{condit}) this implies that, for some $C_{\gamma}>0,$ with probability at least 
$1-2e^{-t}$
$$
\Bigl|\Bigl\langle \hat \theta_r -\sqrt{1+b_r} \theta_r, 
\hat \theta_r'-\sqrt{1+b_r} \theta_r\Bigr\rangle\Bigr|\leq 
C_\gamma \frac{\|\Sigma\|_{\infty}^2}{\bar g_r^2}
\biggl(\sqrt{\frac{{\bf r}(\Sigma)}{n}}\bigvee \sqrt{\frac{t}{n}}\biggr)
\sqrt{\frac{t}{n}}.
$$
It remains to use again bound (\ref{brbd}) on $|b_r|$ and to deduce from (\ref{hatbrbr}) that (\ref{estim_br}) holds with probability at least $1-4e^{-t}.$
To write the probability bound as $1-e^{-t},$ it is enough to adjust 
the constants. 

Under the assumptions of Theorem \ref{th:hattheta}, the proof of bound (\ref{tildetheta}) is straightforward. 

\qed
\end{proof}

We turn now to asymptotic normality of empirical spectral projectors. 
It is easy to see that (\ref{Ehat}), bound (\ref{T_r'''}) on $\|T_r\|_{\infty}$
and Theorem \ref{asymptotic_normality_1} yield the following corollary.

\begin{corollary}
\label{simeicor}
Suppose that 
\begin{equation}
\label{inft'}
\sup_{n\geq 1}\|\Sigma^{(n)}\|_{\infty}<+\infty 
\end{equation}
and 
\begin{equation}
\label{bfr'}
{\bf r}(\Sigma^{(n)})=o(n)\ {\rm as}\ n\goin. 
\end{equation}
Suppose also that 
assumptions \ref{separate_spectrum} and \ref{weak_spectral_measure} of Section \ref{Sec:Asymptotic}  hold. 
Finally, suppose that $\mu^{(n)}=\mu_{r_n}^{(n)}$ is an eigenvalue of $\Sigma^{(n)}$
of multiplicity $1.$ Denote $b^{(n)}=b_{r_n}^{(n)}.$
Then, the finite dimensional distributions of stochastic processes
$$
n^{1/2}\Bigl\langle (\hat P^{(n)}-(1+b^{(n)})P^{(n)})u,v\Bigr\rangle,
u, v\in {\mathbb H}
$$
converge weakly as $n\to\infty$ to the finite dimensional distributions 
of centered Gaussian process $Y(u,v), u,v\in {\mathbb H}$ with covariance 
$\Gamma.$

If, in addition, Assumption \ref{weak_spectral_measure''} holds, then, for all 
$\varphi_n,\psi_n: {\mathbb H}\mapsto {\mathbb H}$ such that $\varphi_n(u)\to u, \psi_n(u)\to u$ as $n\to \infty$ for all $u\in {\mathbb H},$
the finite dimensional distributions of stochastic processes
$$
n^{1/2}\Bigl\langle (\hat P^{(n)}-(1+b^{(n)})P^{(n)})\varphi_n(u),\psi_n(v)\Bigr\rangle,
u, v\in {\mathbb H}
$$
converge weakly as $n\to\infty$ to the same limit.
\end{corollary}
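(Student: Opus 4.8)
\medskip\noindent\textit{Proof plan.} The plan is to deduce the corollary from Theorem~\ref{asymptotic_normality_1} by absorbing the deterministic shift $(1+b^{(n)})P^{(n)}-{\mathbb E}\hat P^{(n)}$ into a negligible remainder. First I would check that the hypotheses of Theorem~\ref{bd_bias} hold along the sequence for all large $n$: Assumption~\ref{separate_spectrum} forces the spectral gap of $\mu^{(n)}$ to satisfy $\bar g_{r_n}^{(n)}\geq \delta>0$ (since $\mu^{(n)}\in(\alpha,\beta)$ while the rest of the spectrum avoids $(\alpha-\delta,\beta+\delta)$), whereas Theorem~\ref{th_operator} together with (\ref{inft'}) and (\ref{bfr'}) gives ${\mathbb E}\|\hat\Sigma_n-\Sigma^{(n)}\|_{\infty}\lesssim \|\Sigma^{(n)}\|_{\infty}\sqrt{{\bf r}(\Sigma^{(n)})/n}\to 0$; hence condition (\ref{condition gamma_r-1}) is eventually satisfied, say with $\gamma=1/2$. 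Since $\mu^{(n)}$ is simple, the computation leading to (\ref{Ehat}) applies and yields ${\mathbb E}\hat P^{(n)}=(1+b^{(n)})P^{(n)}+T^{(n)}$, with $\|T^{(n)}\|_{\infty}$ controlled by (\ref{T_r'''}).

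Next I would write, for fixed $u,v\in{\mathbb H}$,
\begin{equation*}
n^{1/2}\Bigl\langle(\hat P^{(n)}-(1+b^{(n)})P^{(n)})u,v\Bigr\rangle
= n^{1/2}\Bigl\langle(\hat P^{(n)}-{\mathbb E}\hat P^{(n)})u,v\Bigr\rangle
+ n^{1/2}\Bigl\langle T^{(n)}u,v\Bigr\rangle,
\end{equation*}
and bound the last term deterministically by $n^{1/2}\|T^{(n)}\|_{\infty}\|u\|\|v\|\lesssim_{\gamma}\frac{\|\Sigma^{(n)}\|_{\infty}^2}{(\bar g_{r_n}^{(n)})^2}\sqrt{{\bf r}(\Sigma^{(n)})/n}\,\|u\|\|v\|\to 0$, using $\sup_n\|\Sigma^{(n)}\|_{\infty}<\infty$, $\bar g_{r_n}^{(n)}\geq\delta$, and ${\bf r}(\Sigma^{(n)})=o(n)$. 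Thus the remainder vanishes, and it remains to invoke Theorem~\ref{asymptotic_normality_1}---whose hypotheses are precisely (\ref{inft'}), (\ref{bfr'}), Assumption~\ref{separate_spectrum} and Assumption~\ref{weak_spectral_measure}---with $\hat P^{(n)}=\hat P_{r_n}^{(n)}$, to get that the finite-dimensional distributions of $n^{1/2}\langle(\hat P^{(n)}-{\mathbb E}\hat P^{(n)})u,v\rangle$ converge to those of the centered Gaussian process $Y$ with covariance $\Gamma$. Applying Slutsky's theorem jointly to any finite family $(u_1,v_1),\dots,(u_k,v_k)$ then transfers this convergence to the left-hand side.

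For the second claim I would repeat the argument with $\varphi_n(u),\psi_n(v)$ in place of $u,v$: the remainder becomes $n^{1/2}\|T^{(n)}\|_{\infty}\|\varphi_n(u)\|\|\psi_n(v)\|$, which still tends to $0$ because $\varphi_n(u)\to u$ and $\psi_n(v)\to v$ force the norms $\|\varphi_n(u)\|,\|\psi_n(v)\|$ to stay bounded; the main term is then handled by the second part of Theorem~\ref{asymptotic_normality_1}, which additionally requires Assumption~\ref{weak_spectral_measure''}.

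The only genuinely delicate point---and the one I would verify most carefully---is the uniform (in large $n$) validity of the hypotheses of Theorems~\ref{bd_bias} and~\ref{asymptotic_normality_1} along the sequence, which rests entirely on extracting $\bar g_{r_n}^{(n)}\geq\delta$ from Assumption~\ref{separate_spectrum} and on ${\mathbb E}\|\hat\Sigma_n-\Sigma^{(n)}\|_{\infty}\to 0$ from Theorem~\ref{th_operator}; once these are in hand, everything else is routine bookkeeping together with Slutsky's theorem.
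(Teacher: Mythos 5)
Your proposal is correct and follows exactly the route the paper indicates (the paper states, without further elaboration, that the corollary follows from the representation~(\ref{Ehat}), the bound~(\ref{T_r'''}) on $\|T_r\|_{\infty}$, and Theorem~\ref{asymptotic_normality_1}). You have merely supplied the details---deriving $\bar g_{r_n}^{(n)}\geq\delta$ from Assumption~\ref{separate_spectrum}, verifying the hypothesis~(\ref{condition gamma_r-1}) via Theorem~\ref{th_operator}, and checking that $n^{1/2}\|T^{(n)}\|_{\infty}\to 0$---which is precisely the bookkeeping the authors leave implicit.
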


Note that under the assumptions of Corollary \ref{simeicor},
$P^{(n)}=\theta^{(n)}\otimes\theta^{(n)}$
and, with probability tending to $1,$ $\hat P^{(n)}=\hat \theta^{(n)}\otimes \hat \theta^{(n)}$ for eigenvectors $\theta^{(n)}$ of $\Sigma^{(n)}$ and $\hat \theta^{(n)}$ of $\hat \Sigma_n.$ 
We will be able to rephrase the corollary in terms of linear forms of eigenvectors rather than bilinear forms of spectral projectors. 

\begin{theorem}
\label{ANeigen}
Suppose that 
\begin{equation}
\label{inft}
\sup_{n\geq 1}\|\Sigma^{(n)}\|_{\infty}<+\infty 
\end{equation}
and 
\begin{equation}
\label{bfr}
{\bf r}(\Sigma^{(n)})=o(n)\ {\rm as}\ n\goin. 
\end{equation}
Suppose also that 
assumptions \ref{separate_spectrum} and \ref{weak_spectral_measure''} hold and recall that, under
these assumptions, $\theta^{(n)}\to \theta\in {\mathbb H}$
as $n\to\infty.$ 
Finally, assume that the sign of $\hat \theta^{(n)}$ is chosen 
to satisfy the condition $\langle \hat \theta^{(n)},\theta^{(n)}\rangle\geq 0.$
Then, 
the finite dimensional distributions of stochastic processes
$$
n^{1/2}\Bigl\langle \hat \theta^{(n)}-\sqrt{1+b^{(n)}}\theta^{(n)},u\Bigr\rangle,
u\in {\mathbb H}
$$
converge weakly as $n\to\infty$ to the finite dimensional distributions 
of centered Gaussian process  
$
Y(\theta, u), u\in {\mathbb H}. 
$
\end{theorem}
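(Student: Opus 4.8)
The plan is to reduce the statement to Corollary \ref{simeicor} by way of the exact identity (\ref{glav}) for $\langle\hat\theta_r-\sqrt{1+b_r}\theta_r,u\rangle$ derived in the proof of Theorem \ref{th:hattheta}. First I check that the ingredients are in force. Since Assumption \ref{weak_spectral_measure''} implies Assumption \ref{weak_spectral_measure} (Proposition \ref{prop_dec_id}) and, the eigenvalue $\mu^{(n)}$ being simple, also gives $P^{(n)}=\theta^{(n)}\otimes\theta^{(n)}$ with $\theta^{(n)}\to\theta$ (Proposition \ref{prop_dec_1}), all hypotheses of Corollary \ref{simeicor} hold. Moreover Assumption \ref{separate_spectrum} forces the spectral gap $\bar g^{(n)}$ of $\mu^{(n)}$ to be at least $\delta>0$, while $\sup_n\|\Sigma^{(n)}\|_\infty<\infty$ together with ${\bf r}(\Sigma^{(n)})=o(n)$ gives $\mathbb E\|\hat\Sigma_n-\Sigma^{(n)}\|_\infty\to 0$ by Theorem \ref{th_operator}; hence condition (\ref{condition gamma_r-1}) holds for all large $n$ with a fixed $\gamma$ (say $\gamma=1/2$), and bound (\ref{bdbr}) yields $b^{(n)}\to 0$.

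Write $\rho^{(n)}(u):=\langle(\hat P^{(n)}-(1+b^{(n)})P^{(n)})\theta^{(n)},u\rangle$. The key step is to identify the weak limit of $n^{1/2}\rho^{(n)}(u)$. Apply the second part of Corollary \ref{simeicor} (available since Assumption \ref{weak_spectral_measure''} holds) with $\psi_n:=\mathrm{id}$ and the affine maps $\varphi_n(w):=w+\theta^{(n)}-\theta$, which converge pointwise to the identity because $\theta^{(n)}\to\theta$ and which satisfy $\varphi_n(\theta)=\theta^{(n)}$. Evaluating the limiting process at the pairs $(\theta,u_1),\dots,(\theta,u_k)$, for arbitrary fixed $u_1,\dots,u_k\in\mathbb H$, Corollary \ref{simeicor} yields that $\bigl(n^{1/2}\rho^{(n)}(u_j)\bigr)_{j=1}^k$ converges weakly to $\bigl(Y(\theta,u_j)\bigr)_{j=1}^k$.

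Now use identity (\ref{glav}), valid on an event of probability tending to one, for the $n$-th problem:
$$
\langle\hat\theta^{(n)}-\sqrt{1+b^{(n)}}\theta^{(n)},u\rangle
=\frac{\rho^{(n)}(u)}{\sqrt{1+b^{(n)}+\rho^{(n)}(\theta^{(n)})}}
-\frac{\sqrt{1+b^{(n)}}\,\rho^{(n)}(\theta^{(n)})\,\langle\theta^{(n)},u\rangle}{\sqrt{1+b^{(n)}+\rho^{(n)}(\theta^{(n)})}\bigl(\sqrt{1+b^{(n)}+\rho^{(n)}(\theta^{(n)})}+\sqrt{1+b^{(n)}}\bigr)}.
$$
By Remark \ref{rema_rhor} (the sharpened bound on $\rho_r(\theta_r)$ that follows from Remark \ref{rema_Cor1}, both arguments lying in the one-dimensional eigenspace) and the facts $\|\Sigma^{(n)}\|_\infty=O(1)$, $\bar g^{(n)}\geq\delta$, ${\bf r}(\Sigma^{(n)})=o(n)$, one has $\rho^{(n)}(\theta^{(n)})=O_{\mathbb P}\bigl(\sqrt{{\bf r}(\Sigma^{(n)})/n}\,n^{-1/2}\bigr)=o_{\mathbb P}(n^{-1/2})$. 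Hence $n^{1/2}\rho^{(n)}(\theta^{(n)})\to 0$ and $1+b^{(n)}+\rho^{(n)}(\theta^{(n)})\to 1$ in probability; since also $b^{(n)}\to 0$ and $\langle\theta^{(n)},u_j\rangle\to\langle\theta,u_j\rangle$, multiplying the displayed identity by $n^{1/2}$ and combining the previous paragraph with Slutsky's lemma shows that the finite-dimensional distributions of $n^{1/2}\langle\hat\theta^{(n)}-\sqrt{1+b^{(n)}}\theta^{(n)},u\rangle$ converge weakly to those of $Y(\theta,u)$ (the complement of the event where (\ref{glav}) holds has probability $\to 0$ and does not affect the limit). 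This is the assertion of the theorem.

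I expect the point requiring the most care to be the reduction of Corollary \ref{simeicor}, which is stated for bilinear forms with both arguments varying, to the quantity $\rho^{(n)}(u)$ whose first argument is the \emph{moving} vector $\theta^{(n)}$; this is exactly what the affine device $\varphi_n(w)=w+\theta^{(n)}-\theta$ takes care of, using $\theta^{(n)}\to\theta$. The other nontrivial input is that the ``self-aligned'' remainder $\rho^{(n)}(\theta^{(n)})$ is genuinely $o_{\mathbb P}(n^{-1/2})$, so that the denominators in (\ref{glav}) concentrate at $\sqrt{1+b^{(n)}}$ and the correction term vanishes at scale $n^{-1/2}$ — this is the content of the improvement recorded in Remark \ref{rema_rhor}. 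The remaining manipulations are Slutsky-type bookkeeping.
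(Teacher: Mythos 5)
Your proof is correct and follows essentially the same route as the paper: reduce to Corollary \ref{simeicor} using the exact algebraic identity (\ref{glav}), then show the ``self-aligned'' term $n^{1/2}\rho^{(n)}(\theta^{(n)})$ vanishes in probability and apply Slutsky. The one place where you diverge slightly is in how you control $\rho^{(n)}(\theta^{(n)})$: the paper gets $n^{1/2}\rho^{(n)}(\theta^{(n)})=o_{\mathbb P}(1)$ from the joint weak convergence of $n^{1/2}\bigl(\rho^{(n)}(u),\rho^{(n)}(\theta^{(n)})\bigr)$ supplied by Corollary \ref{simeicor} together with the degeneracy $Y(\theta,\theta)=0$ (which holds because $\Gamma_2(\theta,\theta)=0$: the limiting spectral measure $\Lambda_{\theta,\theta}$ is concentrated on $[\alpha,\beta]$), whereas you derive the same conclusion from the non-asymptotic concentration bound in Remark \ref{rema_rhor}. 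Both inputs are valid and yield the same $o_{\mathbb P}(n^{-1/2})$ rate, so the proofs are interchangeable at this step; your version is a bit more ``hands-on,'' the paper's a bit more structural. You also make explicit the maps $\varphi_n(w)=w+\theta^{(n)}-\theta$ needed to accommodate the moving eigenvector $\theta^{(n)}$ inside Corollary \ref{simeicor}'s second part; the paper invokes that corollary more tersely and leaves this device implicit. That explicitness is a genuine improvement in clarity, not a gap.
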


\begin{proof}
Denote  
$$
\rho^{(n)}(u):=\rho_{r_n}^{(n)}(u):=\langle (\hat P^{(n)}-(1+b^{(n)})P^{(n)})\theta^{(n)},u\rangle, u\in {\mathbb H}.
$$
It follows from Corollary \ref{simeicor} and the fact that $Y(\theta,\theta)=0$ 
(see also Theorem Theorem \ref{asymptotic_normality_1} and the definition 
of the process $Y$ and its covariance) that the finite dimensional distributions 
of stochastic processes 
\begin{equation}
\label{comb_comb}
n^{1/2}\Bigl(\rho^{(n)}(u), \rho^{(n)}(\theta^{(n)})\Bigr), u\in {\mathbb H}
\end{equation}
converge weakly
to the Gaussian process $(Y(\theta,u),0), u\in {\mathbb H}.$ 
In particular, this 
implies that 
$$\rho^{(n)}(\theta^{(n)})=O_{\mathbb P}(n^{-1/2})=o_{\mathbb P}(1).$$ 
Under the conditions of the corollary, 
we also have that 
$$
b^{(n)}=O\biggl(\frac{{\bf r}(\Sigma^{(n)})}{n}\biggr)=o(1).
$$
It follows from (\ref{glav}) that 
\begin{align}
&
\nonumber
n^{1/2}\Bigl\langle \hat \theta^{(n)}-\sqrt{1+b^{(n)}}\theta^{(n)},u\Bigr\rangle = 
\frac{n^{1/2}\rho^{(n)}(u)}{\sqrt{1+b^{(n)}+\rho^{(n)}(\theta^{(n)})}}
\\
&
\nonumber
- 
\frac{\sqrt{1+b^{(n)}}}{\sqrt{1+b^{(n)}+\rho^{(n)}(\theta^{(n)})}
\Bigl(\sqrt{1+b^{(n)}+\rho^{(n)}(\theta^{(n)})}+\sqrt{1+b^{(n)}}\Bigr)}
n^{1/2}\rho^{(n)}(\theta^{(n)})\langle \theta^{(n)},u\rangle.
\end{align}
This representation, the convergence of finite dimensional distribution 
of the process (\ref{comb_comb}) and the fact that $\rho^{(n)}(\theta^{(n)})=o_{\mathbb P}(1), b^{(n)}=o(1),$
imply the result.
\qed
\end{proof}

It turns out that the asymptotic normality also holds for the estimator 
with bias correction 
$
\tilde \theta^{(n)}:=\frac{\hat \theta^{(n)}}{\hat b^{(n)}},
$
where $\hat b^{(n)}:=\langle \hat \theta^{(n)}, \hat \theta'^{(n)}\rangle-1,$
$\hat \theta^{(n)},$ $\hat \theta'^{(n)}$ being empirical eigenvectors based on 
the first and on the second subsamples (of size $n$ each) of a sample of size $2n.$ As before, it is assumed that $\langle \hat \theta^{(n)}, \hat \theta'^{(n)}\rangle\geq 0.$
We state the result without proof.

\begin{theorem}
Under assumptions of Theorem \ref{ANeigen}, 
the finite dimensional distributions of stochastic processes
$$
\sqrt{n}\Bigl\langle \tilde \theta^{(n)} - \theta^{(n)},u\Bigr\rangle,
u\in {\mathbb H}
$$
converge weakly to the finite dimensional distributions of  
stochastic process 
$
Y(\theta,u), u\in {\mathbb H}.
$ 
\end{theorem}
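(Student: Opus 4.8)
The plan is to deduce this statement as a corollary of Theorem~\ref{ANeigen} together with the (sequential version of) Proposition~\ref{prop:tildetheta}, in the same spirit as the proof of Theorem~\ref{ANeigen} itself. Recall that the bias-corrected estimator is $\tilde \theta^{(n)}=\hat \theta^{(n)}/\sqrt{1+\hat b^{(n)}},$ so one has the exact identity
$$
\tilde \theta^{(n)}-\theta^{(n)}=\frac{\hat \theta^{(n)}-\sqrt{1+b^{(n)}}\,\theta^{(n)}}{\sqrt{1+\hat b^{(n)}}}+\frac{b^{(n)}-\hat b^{(n)}}{\sqrt{1+\hat b^{(n)}}\bigl(\sqrt{1+b^{(n)}}+\sqrt{1+\hat b^{(n)}}\bigr)}\,\theta^{(n)}.
$$
Taking the inner product with a fixed $u\in {\mathbb H}$ and multiplying by $n^{1/2},$ the first summand equals $\bigl(1+\hat b^{(n)}\bigr)^{-1/2}\,n^{1/2}\langle \hat \theta^{(n)}-\sqrt{1+b^{(n)}}\,\theta^{(n)},u\rangle,$ whose limiting behaviour is exactly what Theorem~\ref{ANeigen} describes, while the second summand is a bounded scalar multiple of $\langle \theta^{(n)},u\rangle\to\langle\theta,u\rangle$ times $n^{1/2}(b^{(n)}-\hat b^{(n)}).$ So it suffices to prove $n^{1/2}(\hat b^{(n)}-b^{(n)})=o_{\mathbb P}(1)$ and $\hat b^{(n)}=o_{\mathbb P}(1),$ and then invoke Slutsky's theorem.

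First I would check the sequential applicability of Proposition~\ref{prop:tildetheta}: by Theorem~\ref{th_operator} and ${\bf r}(\Sigma^{(n)})=o(n)$ we have ${\mathbb E}\|\hat\Sigma_n-\Sigma^{(n)}\|_\infty\to 0;$ Assumption~\ref{separate_spectrum} forces the spectral gap $\bar g^{(n)}:=\bar g_{r_n}(\Sigma^{(n)})\ge\delta>0$ for all large $n;$ and bound~(\ref{bdbr}) gives $b^{(n)}=O\bigl({\bf r}(\Sigma^{(n)})/n\bigr)=o(1).$ Hence, fixing any $\gamma\in(0,1/2),$ the hypotheses of Proposition~\ref{prop:tildetheta} (with $\Sigma=\Sigma^{(n)},$ $r=r_n$) hold for all large $n,$ and its conclusion, combined with $\sup_n\|\Sigma^{(n)}\|_\infty<\infty$ and $\bar g^{(n)}\ge\delta,$ gives for each fixed $t\ge1,$ with probability at least $1-e^{-t},$
$$
|\hat b^{(n)}-b^{(n)}|\lesssim_\gamma\biggl(\sqrt{\frac{{\bf r}(\Sigma^{(n)})}{n}}\bigvee\sqrt{\frac{t}{n}}\biggr)\sqrt{\frac{t}{n}}.
$$
Multiplying by $n^{1/2},$ the right-hand side is $\lesssim_\gamma\sqrt{t\,{\bf r}(\Sigma^{(n)})/n}\bigvee t/\sqrt{n},$ which tends to $0$ for each fixed $t$ because ${\bf r}(\Sigma^{(n)})=o(n);$ letting $t=t_n\to\infty$ slowly enough then yields $n^{1/2}(\hat b^{(n)}-b^{(n)})=o_{\mathbb P}(1),$ and in particular $\hat b^{(n)}=o_{\mathbb P}(1),$ so $\sqrt{1+\hat b^{(n)}}\to 1$ in probability and the second summand in the identity above is $o_{\mathbb P}(n^{-1/2}).$

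Finally I would assemble everything via Slutsky's theorem and the Cram\'er--Wold device: for any $u_1,\dots,u_k\in{\mathbb H}$ and reals $c_1,\dots,c_k,$
$$
\sum_{i}c_i\,n^{1/2}\langle \tilde\theta^{(n)}-\theta^{(n)},u_i\rangle=\frac{1}{\sqrt{1+\hat b^{(n)}}}\sum_i c_i\,n^{1/2}\langle \hat\theta^{(n)}-\sqrt{1+b^{(n)}}\,\theta^{(n)},u_i\rangle+o_{\mathbb P}(1),
$$
and by Theorem~\ref{ANeigen} the sum on the right converges weakly to $\sum_i c_i\,Y(\theta,u_i),$ a centered Gaussian random variable; since $\bigl(1+\hat b^{(n)}\bigr)^{-1/2}\to1$ in probability, Slutsky's theorem transfers this limit to the left-hand side, which is exactly the asserted convergence of the finite-dimensional distributions to those of the process $Y(\theta,\cdot).$ The main point requiring care is the step $n^{1/2}(\hat b^{(n)}-b^{(n)})=o_{\mathbb P}(1)$: this is precisely where the hypothesis ${\bf r}(\Sigma^{(n)})=o(n)$ enters, since it is what makes the plug-in bias correction $n^{1/2}$-negligible; but, granted Proposition~\ref{prop:tildetheta}, this is not a genuine obstacle and the rest is routine bookkeeping with Slutsky's theorem.
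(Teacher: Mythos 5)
The paper states this theorem ``without proof,'' so there is no official argument to compare against; your proposal supplies the obvious missing one and it is correct. The decomposition
$$
\tilde\theta^{(n)}-\theta^{(n)}=\frac{\hat\theta^{(n)}-\sqrt{1+b^{(n)}}\,\theta^{(n)}}{\sqrt{1+\hat b^{(n)}}}+\frac{b^{(n)}-\hat b^{(n)}}{\sqrt{1+\hat b^{(n)}}\bigl(\sqrt{1+b^{(n)}}+\sqrt{1+\hat b^{(n)}}\bigr)}\,\theta^{(n)}
$$
is exact (and reveals, as the paper also flags via its display defining $\tilde\theta^{(n)}$, that the formula $\tilde\theta^{(n)}=\hat\theta^{(n)}/\hat b^{(n)}$ there is a typo for $\hat\theta^{(n)}/\sqrt{1+\hat b^{(n)}}$). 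Your sequential verification that the hypotheses of Theorem~\ref{th:hattheta} and hence Proposition~\ref{prop:tildetheta} hold for large $n$ (using $\bar g^{(n)}\gtrsim\delta$ from Assumption~\ref{separate_spectrum}, $\sup_n\|\Sigma^{(n)}\|_\infty<\infty$, ${\bf r}(\Sigma^{(n)})=o(n)$, and $b^{(n)}=o(1)$) is sound, and the deduction $n^{1/2}(\hat b^{(n)}-b^{(n)})=o_{\mathbb P}(1)$ from bound~(\ref{estim_br}) is exactly what the effective-rank hypothesis buys; the final Slutsky/Cram\'er--Wold assembly is routine. One implicit point worth making explicit in a write-up: in the sample-splitting construction, $\hat\theta^{(n)}$ in Theorem~\ref{ANeigen} must be read as the first-subsample eigenvector (of size-$n$ subsample), which is how the paper sets it up, so the invocation of Theorem~\ref{ANeigen} is legitimate.
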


Suppose ${\mathbb H}={\mathbb R}^p$ and let $e_1,\dots, e_p$ be an orthonormal 
basis of the space ${\mathbb R}^p.$ For $u\in {\mathbb R}^p,$ let 
$$
\|u\|_{\ell_{\infty}} :=\max_{1\leq j\leq p}|\langle u,e_j\rangle|=
\max_{1\leq j\leq p}|u^{(j)}|.
$$ 
We present now a non-asymptotic bound on 
$\|\tilde\theta_r- \theta_r\|_{\ell_{\infty}}$
that immediately follows from Proposition \ref{prop:tildetheta}. 

\begin{corollary}\label{thm:sup-norm bound}
Suppose all the assumptions of Theorem \ref{th:hattheta} hold and, moreover,
$$
C_{\gamma}\|\Sigma\|_{\infty}\left(\sqrt{\frac{t+\log p}{n}}\bigvee \frac{t+\log p}{n} \right)
\leq \frac{\gamma \bar g_r}{2}.
$$
Then, with probability at least $1-e^{-t},$
$$
\left\|\tilde \theta_r - \theta_r\right\|_{\ell_{\infty}} \leq C_{\gamma}\frac{\|\Sigma\|_{\infty}}{\bar g_r}\sqrt{\frac{t+\log p}{n}}.
$$

\end{corollary}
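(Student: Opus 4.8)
The plan is to deduce the corollary from bound (\ref{tildetheta}) of Proposition \ref{prop:tildetheta} by applying it one coordinate at a time and then taking a union bound over the canonical basis $e_1,\dots,e_p$ of $\mathbb{R}^p$. All the analytic content is already contained in Theorem \ref{th:hattheta} and Proposition \ref{prop:tildetheta}; what remains is only a standard $\log p$-inflation argument.

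First I would fix $j\in\{1,\dots,p\}$ and apply Proposition \ref{prop:tildetheta} with $u=e_j$ (so that $\|u\|=1$) and with the confidence parameter $t$ replaced by $t+\log p$. The strengthened hypothesis
$$
C_{\gamma}\|\Sigma\|_{\infty}\left(\sqrt{\frac{t+\log p}{n}}\bigvee \frac{t+\log p}{n}\right)\leq \frac{\gamma\bar g_r}{2}
$$
is exactly what guarantees that the assumptions of Theorem \ref{th:hattheta} (hence of Proposition \ref{prop:tildetheta}) remain in force with $t+\log p$ in place of $t$; the remaining hypotheses, ${\mathbb E}\|\hat\Sigma-\Sigma\|_{\infty}\leq (1-2\gamma)\bar g_r/2$ and $1+b_r\geq 2\gamma$, do not involve $t$ and are untouched. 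Thus, for each fixed $j$, on an event $\Omega_j$ of probability at least $1-e^{-(t+\log p)}=1-p^{-1}e^{-t}$ one has
$$
\Bigl|\langle \tilde\theta_r-\theta_r,e_j\rangle\Bigr|\leq C_{\gamma}\frac{\|\Sigma\|_{\infty}}{\bar g_r}\sqrt{\frac{t+\log p}{n}}.
$$
Next I would intersect these events: on $\bigcap_{j=1}^p\Omega_j$, which has probability at least $1-p\cdot p^{-1}e^{-t}=1-e^{-t}$, all $p$ inequalities hold simultaneously, so that
$$
\|\tilde\theta_r-\theta_r\|_{\ell_{\infty}}=\max_{1\leq j\leq p}\Bigl|\langle \tilde\theta_r-\theta_r,e_j\rangle\Bigr|\leq C_{\gamma}\frac{\|\Sigma\|_{\infty}}{\bar g_r}\sqrt{\frac{t+\log p}{n}},
$$
which is the claimed bound (after relabeling $C_{\gamma}$ if necessary, exactly as in the proof of Theorem \ref{th:hattheta}).

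There is essentially no obstacle of substance here. The two points that require a moment's care are: (a) the per-coordinate bound (\ref{tildetheta}) is a statement valid for each \emph{fixed} $u$ with probability $1-e^{-t}$, not a bound uniform over all $u$ simultaneously — this is precisely why the logarithmic inflation of the deviation parameter from $t$ to $t+\log p$ is needed before the union bound; and (b) one must carry the $p^{-1}$ factor through so that the union over $j=1,\dots,p$ returns a clean $1-e^{-t}$. Finally, the very condition on $t+\log p$ appearing in the hypothesis also forces ${\mathbb P}\{\|\hat\Sigma-\Sigma\|_{\infty}<\bar g_r/2\}\geq 1-e^{-t}$ (via Theorem \ref{spectrum_sharper}), so the event on which $\hat P_r$, and hence $\tilde\theta_r$, is well defined is already absorbed into the $1-e^{-t}$ bound and no further bookkeeping is needed.
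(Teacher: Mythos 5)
Your argument is correct and is precisely the intended one: the paper states that the corollary ``immediately follows from Proposition \ref{prop:tildetheta},'' and the standard way to realize that is exactly your coordinate-wise application of bound (\ref{tildetheta}) with $t$ replaced by $t+\log p$ (which is what the strengthened hypothesis in the corollary makes admissible, since the other two hypotheses of Theorem \ref{th:hattheta} do not involve $t$), followed by a union bound over the $p$ canonical directions. No gap; the only cosmetic point, which you already flag, is that the constant $C_\gamma$ in the conclusion should be understood as a relabeling.
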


\noindent\textbf{Example: Eigenvector support recovery.} Our goal is to recover the support of eigenvector $\theta_r$ denoted by 
$$
J_r := {\rm supp}(\theta_r):=\left\lbrace j\,:\,  \theta_r^{(j)} \neq 0 \right\rbrace.
$$ 
It follows from Corollary \ref{thm:sup-norm bound} that a simple hard-thresholding procedure can achieve support recovery. Define
$
\tilde J_r  = \left\lbrace j\,:\,  |\tilde\theta_r^{(j)}| > \beta_n \right\rbrace
$
where $\beta_n := C_{\gamma} \frac{\|\Sigma\|_{\infty}}{\bar g_r}\sqrt{\frac{t+\log p}{n}}$. If
$\rho := \min_{j \in J_r} |\theta_r^{(j)}| > 2\beta_n,$
then we can immediately deduce from Corollary \ref{thm:sup-norm bound} that $\mathbb P\left( \tilde J_r = J_r \right) \geq 1-e^{-t}$. It is well known that the theoretical threshold to perform support recovery in the Gaussian sequence space model is $\beta_n^* \asymp \sigma \sqrt{\frac{t+\log p }{n}}$ where $\sigma$ is the noise variance. The above threshold $\beta_n$ in eigenvector support recovery is similar with the noise variance $\sigma$ replaced by $\frac{\|\Sigma\|_{\infty}}{\bar g_r}$.

\noindent\textbf{Example: Sparse PCA oracle inequality.} We propose a new estimator of $\theta_r$ that satisfies a sparsity oracle inequality with sharp minimax $l_2$-norm rate (see \cite{VuLei2012} for more details about minimax rates in sparse PCA). This estimator is computationally feasible and also adaptive in the sense that no prior knowledge about the sparsity of $\theta_r$ is required. Consider the estimator $\widetilde \theta_r \in \mathbb R^{p}$ obtained by keeping all the components of $\tilde \theta_r$ with their indices in $\tilde J_r$ and setting all the remaining components equal to $0$. We denote by $\|\theta_r\|_{l_0}$ the number of nonzero components of $\theta_r$. Combining the above support recovery property with Corollary \ref{thm:sup-norm bound}, we immediately get the following result.

\begin{theorem}
Let the conditions of Theorem \ref{th:hattheta} be satisfied. Assume in addition that $\rho = \min_{j \in J_r} |\theta_r^{(j)}| \geq 2 \beta_n $. Then, with  probability at least $1-e^{-t}$
\begin{align}
\|\widetilde \theta_r - \theta_r\|_{l_2}^2 \leq C_\gamma^2 \frac{\|\Sigma\|_{\infty}^2}{\bar g_r^2} \|\theta_r\|_{l_0} \frac{t + \log p}{n}.
\end{align}
\end{theorem}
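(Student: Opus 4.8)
The plan is to run everything on the single high‑probability event $\mathcal{E}$ produced by Corollary \ref{thm:sup-norm bound}, namely the event on which $\|\tilde\theta_r-\theta_r\|_{\ell_{\infty}}\leq \beta_n$, where $\beta_n=C_{\gamma}\frac{\|\Sigma\|_{\infty}}{\bar g_r}\sqrt{\frac{t+\log p}{n}}$; by that corollary (whose hypotheses are those in force here), $\mathbb{P}(\mathcal{E})\geq 1-e^{-t}$. On $\mathcal{E}$ the hard‑thresholding step recovers the support exactly, as already observed in the support‑recovery example preceding the theorem: since $\rho=\min_{j\in J_r}|\theta_r^{(j)}|\geq 2\beta_n$, for $j\in J_r$ we have $|\tilde\theta_r^{(j)}|\geq |\theta_r^{(j)}|-\beta_n\geq\beta_n$, while for $j\notin J_r$ we have $|\tilde\theta_r^{(j)}|\leq|\theta_r^{(j)}|+\beta_n=\beta_n$, so that $\tilde J_r=\{j:|\tilde\theta_r^{(j)}|>\beta_n\}=J_r$ on $\mathcal{E}$. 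Consequently $\widetilde\theta_r$ is supported in $J_r$ and agrees with $\tilde\theta_r$ coordinate‑wise on $J_r$.

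Given this, the $l_2$‑error splits coordinate‑wise: on $J_r^c$ both $\widetilde\theta_r$ and $\theta_r$ vanish, so $(\widetilde\theta_r-\theta_r)^{(j)}=0$ there, while on $J_r$ one has $(\widetilde\theta_r-\theta_r)^{(j)}=\tilde\theta_r^{(j)}-\theta_r^{(j)}$. Hence, on $\mathcal{E}$,
\[
\|\widetilde\theta_r-\theta_r\|_{l_2}^2=\sum_{j\in J_r}\bigl|\tilde\theta_r^{(j)}-\theta_r^{(j)}\bigr|^2\leq |J_r|\,\|\tilde\theta_r-\theta_r\|_{\ell_{\infty}}^2\leq \|\theta_r\|_{l_0}\,\beta_n^2,
\]
using $|J_r|=\|\theta_r\|_{l_0}$ and the $\ell_{\infty}$‑bound of Corollary \ref{thm:sup-norm bound}. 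Substituting $\beta_n^2=C_{\gamma}^2\frac{\|\Sigma\|_{\infty}^2}{\bar g_r^2}\frac{t+\log p}{n}$ yields the asserted inequality with the same constant, valid with probability at least $1-e^{-t}$.

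\textbf{Main obstacle.} There is no genuine analytic difficulty: the statement is a direct corollary of the sup‑norm concentration bound (which itself rests on Proposition \ref{prop:tildetheta}) together with the elementary thresholding argument. The only points requiring a little care are that the support‑recovery assertion and the $\ell_{\infty}$‑control must be invoked on the \emph{same} event — which is automatic, since both follow from the one bound $\|\tilde\theta_r-\theta_r\|_{\ell_{\infty}}\leq\beta_n$ — and that the threshold $\beta_n$ is deliberately set equal to the right‑hand side of the sup‑norm bound, so that the constant $C_{\gamma}$ propagates unchanged (after the usual rescaling of constants needed to write the probability as $1-e^{-t}$ rather than $1-ce^{-t}$).
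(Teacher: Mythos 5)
Your proof is correct and is essentially the argument the paper has in mind: the authors state the result as an immediate consequence of the support-recovery property ($\tilde J_r=J_r$ on the event where $\|\tilde\theta_r-\theta_r\|_{\ell_\infty}\leq\beta_n$) and of Corollary \ref{thm:sup-norm bound}, and your coordinate-wise decomposition over $J_r$ and $J_r^c$ followed by the bound $\|\theta_r\|_{l_0}\,\beta_n^2$ is precisely how that combination plays out.
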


\bibliographystyle{plain}
\bibliography{biblio}

\end{document}